\documentclass{article}
\usepackage{bm}
\usepackage{tikz,mathpazo}
\usepackage{pgf}
\usepackage[top=2.5cm, bottom=2.5cm, left=3cm, right=3cm]{geometry}   
\usepackage[normalem]{ulem}
\usepackage{enumitem}
\usepackage{indentfirst}
\usepackage{graphicx}
\usepackage{graphics}
\usepackage[toc,page,title,titletoc,header]{appendix}
\usepackage{bm}
\usepackage{bbm}
\usepackage{listings}  
\usepackage{amsmath}
\usepackage{setspace} 
\usepackage{indentfirst}
\usepackage{caption}
\usepackage{multirow} 
\usepackage{lipsum,multicol}
\usepackage{pdfpages}
\usepackage{float}
\usepackage{amsthm}
\usepackage{pdfpages}
\usepackage{url}
\usepackage{colortbl}
\usepackage{subfigure}
\usepackage{epsfig}
\usepackage{epstopdf}
\usetikzlibrary{shapes.geometric, arrows}
\usepackage{fancyhdr}
\usepackage{abstract}
\usepackage{tikz,mathpazo}
\usetikzlibrary{shapes.geometric, arrows}
\usepackage{amssymb}
\usepackage{latexsym}
\usepackage{verbatim}
\usepackage[numbers]{natbib} 
\usepackage{booktabs}
\usepackage{mathrsfs}

\usepackage{hyperref}
\hypersetup{
    colorlinks=true,
    linkcolor=blue,
    filecolor=magenta,      
    urlcolor=cyan,
    citecolor = cyan,
    }

\newcommand{\bb}[1]{\mathbb{#1}}

\newcommand{\ca}[1]{\mathcal{#1}}

\newcommand{\xk}{{x_{k} }}
\newcommand{\yk}{{y_{k} }}

\newcommand{\zk}{{z_{k} }}

\newcommand{\zkp}{{z_{k+1} }}

\newcommand{\Hb}{{\bar H}}
\newcommand{\Ht}{{\tilde H}}
\newcommand{\Hbk}{{\bar H}_k}

\newcommand{\xik}{\xi_k}
\newcommand{\muy}{\mu}

\newcommand{\Rn}{\mathbb{R}^n}

\newcommand{\Rm}{\mathbb{R}^m}

\DeclareMathOperator*{\argmin}{arg\,min}

\newcommand{\ys}{\mathcal{Y}^*}
\newcommand{\zs}{x, \ys(x)}
\newcommand{\crit}{\mathrm{crit}}
\newcommand{\cB}{\ca{B}}

\newcommand{\dfy}{\nabla_y f}
\newcommand{\ddfxx}{\nabla^2_{xx} f}
\newcommand{\ddfyy}{\nabla^2_{yy} f}
\newcommand{\ddfxy}{\nabla^2_{xy} f}
\newcommand{\ddfyx}{\nabla^2_{yx} f}

\newcommand{\qaq}{\quad\text{and}\quad}
\newcommand{\hb}{h_\beta}

\newcommand{\df}{\nabla f}
\newcommand{\ddf}{\nabla^2 f}
\newcommand{\dhb}{\nabla h_{\beta}}

\newcommand{\cF}{\mathcal{F}}
\newcommand{\Exp}{\mathbb{E}}

\newcommand{\cBgky}{\mathcal{B}^g_{k,y}}
\newcommand{\cBhko}{\mathcal{B}^{h}_{k,1}}
\newcommand{\cBhkt}{\mathcal{B}^{h}_{k,2}}

\newcommand{\cBgko}{\mathcal{B}^{g}_{k,1}}
\newcommand{\cBgkt}{\mathcal{B}^{g}_{k,2}}

\newcommand{\buj}{\boldsymbol{u}_j}

\newcommand{\bzj}{\boldsymbol{z}_j}
\newcommand{\bzjm}{\boldsymbol{z}_{j-1}}

\newcommand{\bgkm}{\boldsymbol{g}_{k-1}}
\newcommand{\bgk}{\boldsymbol{g}_k}
\newcommand{\bHbk}{\bar{\boldsymbol{H}}_k}
\newcommand{\bHbkm}{\bar{\boldsymbol{H}}_{k-1}}
\newcommand{\bzk}{\boldsymbol{z}_k}
\newcommand{\bzkm}{\boldsymbol{z}_{k-1}}
\newcommand{\bzkp}{\boldsymbol{z}_{k+1}}
\newcommand{\bgyk}{\boldsymbol{g}_{y,k}}
\newcommand{\bxik}{\boldsymbol{\xi}_k}
\newcommand{\bxikm}{\boldsymbol{\xi}_{k-1}}
\newcommand{\ddF}{\nabla^2 F} 
\newcommand{\dF}{\nabla F}

\newcommand{\alo}{\alpha_1}
\newcommand{\alt}{\alpha_2}
\newcommand{\sigo}{\sigma_1}
\newcommand{\sigt}{\sigma_2}
\newcommand{\bwk}{\boldsymbol{w}_k}
\newcommand{\bwkm}{\boldsymbol{w}_{k-1}}
\newcommand{\cL}{\mathcal{L}}
\newcommand{\bxis}{\boldsymbol{\xi}_{k^*}}
\newcommand{\bzs}{\boldsymbol{z}_{k^*}}
\newcommand{\bzsp}{\boldsymbol{z}_{k^*+1}}
\newcommand{\xmark}{\color{red}\ding{55}}
\newcommand{\cmark}{\color{black}\ding{51}}
\newcommand{\Ek}{\Exp_k}

\newtheorem{theo}{Theorem}[section]
\newtheorem{lem}[theo]{Lemma}
\newtheorem{prop}[theo]{Proposition}

\newtheorem{coro}[theo]{Corollary}

\newtheorem{defin}[theo]{Definition}
\newtheorem{assumpt}[theo]{Assumption}

\theoremstyle{definition}

\usepackage[figuresright]{rotating}
\usepackage{pdflscape}

\usepackage{caption}
\usepackage{algorithm}
\usepackage{algpseudocode}
\usepackage{longtable}
\usepackage{appendix}
\usepackage{authblk}
\usepackage{pifont}

\numberwithin{equation}{section}

\usepackage{array}

\title{A Single-Loop Regularized Newton Method for Nonconvex-Strongly-Concave Minimax Optimization}
\author{Bohao Ma\thanks{School of Data Science, The Chinese University of Hong Kong (Shenzhen), Shenzhen, Guangdong, China 
(\url{bohaoma@link.cuhk.edu.cn}  and \url{xncxy@cuhk.edu.cn}).}, ~
Nachuan Xiao\protect\footnotemark[1], ~
Junyu Zhang\thanks{Department of Industrial Systems Engineering and Management, National University of Singapore, Singapore (\url{junyuz@nus.edu.sg}).}}

\begin{document}
\maketitle
	
\begin{abstract}
    For smooth nonconvex-strongly-concave minimax problems, existing second-order methods share a common double-loop structure where the inner maximization is solved to sufficiently high accuracy before each second-order step. First-order methods are often adopted in the inner loops for scalability, but they also undermine the condition-insensitivity of second-order methods, limiting these methods to instances with mild conditioning. To resolve this issue, we propose a novel single-loop framework based on an equivalent  regularized minimization reformulation of the original problem.  By deriving a new adaptive cubic-quadratic majorization to dynamically absorb the non-Lipschitz components of the reformulated Hessian, we establish a regularized Newton method with robust theoretical guarantees across multiple settings. For deterministic problems, our single-loop method matches the $\mathcal{O}(\varepsilon^{-1.5})$ global iteration complexity of the existing double-loop second-order methods, while automatically achieving a local superlinear rate that is unavailable in existing works due to the inner-loop bottleneck. For the stochastic setting, we achieve  $\mathcal{O}(\varepsilon^{-3})$ gradient and $\mathcal{O}(\varepsilon^{-2})$ Hessian complexities by integrating a recursive variance reduction, strictly improving  
    those of the double-loop methods by  $\mathcal{O}(\varepsilon^{-0.5})$ factors. In both deterministic and stochastic experiments, our methods significantly outperform the benchmarks, offering substantial speedups over the double-loop methods even under mildly conditioned instances. 
    As a byproduct of our analysis, we close a gap in stochastic second-order methods for nonconvex minimization, where the best known result contains a nontrivial technical issue.  
\end{abstract}
	
\section{Introduction} \label{sec:intro}
In this paper, we consider the nonconvex-strongly-concave (NC-SC) minimax problem 
\begin{equation} 
    \label{Prob_Ori}
    \tag{MM}
    \begin{aligned}
        \min_{x \in \Rn} \max_{y \in \Rm} f(x,y), 
    \end{aligned}
\end{equation}
where the objective function $f: \Rn \times \Rm \to \bb{R}$ is smooth, possibly nonconvex in $x$, and strongly concave in $y$. 
Besides the classic applications in generative models \citep{goodfellow2020generative}, adversarial learning \citep{sinha2017certifying}, and distributionally robust regression \citep{shafieezadeh2015distributionally}, efficiently solving problem \eqref{Prob_Ori} has also become a key step in the recent development of bilevel optimization methods \citep{kwon2023fully,chen2025near}.  
In many of these applications, the problem exhibits a stochastic structure, where the objective function takes the expectation form $f(x,y) = \Exp_\zeta[F(x,y;\zeta)]$.

While first-order methods \citep[e.g.,][]{lin2020gradient,mahdavinia2022tight,li2023tiada,xu2023unified,lin2025two} have become methods of choice for large-scale problems, they often struggle with ill-conditioning issues and converge slowly in complex landscapes. In contrast, for problems of small- to medium-scale, second-order methods exhibit substantial advantages. Leveraging curvature information, they not only enjoy faster local convergence and better dependence on the condition number, but also provide principled mechanisms for escaping strictly degenerate first-order stationary points, see, e.g., \citep{nesterov2006cubic,royer2020newton}. Motivated by these benefits, there has long been interest in designing efficient second-order methods for minimax problems \citep{taji1993globally}, dating back to the last century. However, their recent development has been much more limited than that of first-order methods.

In the design and analysis of optimization methods for solving \eqref{Prob_Ori}, one key issue lies in the lack of a suitable descent Lyapunov function (or potential function), which has posed challenges for both algorithm design and convergence analysis. In particular, for NC-SC problems considered in this paper, existing second-order methods are mostly based on the following minimization reformulation: 
\begin{equation} 
\tag{VM}
\label{Prob_value_fcn}
    \min_{x \in \Rn} \Phi(x), \quad \text{with} \quad \Phi(x):= \max_{y \in \Rm} f(x,y),
\end{equation}
where $\Phi(x)$ is often called the value function and hence we call it the value minimization \eqref{Prob_value_fcn}. Due to the inner maximization in \eqref{Prob_value_fcn}, these methods  usually adopt a double-loop structure \citep[etc.]{luo2022finding,chen2023a,wang2025gradient,chen2026homogeneous,yang2026second}, which suffers from several significant drawbacks.       

For deterministic NC-SC problems, the inner loop directly undermines two traditional advantages of second-order methods: \emph{local superlinear convergence} and \emph{reduced sensitivity to ill-conditioning}. Typical examples include \citep{luo2022finding} and \citep{chen2026homogeneous}, whose  outer loops adopt Nesterov's cubic regularized Newton \citep{nesterov2006cubic} and Ye's homogeneous second-order descent \citep{zhang2025homogeneous} frameworks, respectively. However, both methods use first-order methods to solve the inner-loop maximization subproblem to \emph{sufficiently high accuracy}. Although this design improves the scalability of the algorithms by adopting cheaper inner-loop subroutines, it also sacrifices local superlinear convergence and reintroduces condition-number dependence. For ill-conditioned problems, these issues significantly limit the practical performance of double-loop methods (see Section \ref{sec:numerical experiments}).

For stochastic NC-SC problems, the inner loop further worsens the Hessian and gradient sample complexities. On the one hand,  double-loop second-order methods usually require the precision of the inner loop to be much higher than the target precision of the outer loop. On the other hand, since stochastic first-order methods no longer enjoy linear convergence for the inner problems, the $\mathrm{poly}(1/\varepsilon)$ and $\mathrm{poly}(\kappa)$ terms\footnote{$\varepsilon$ denotes the target accuracy and $\kappa$ denotes the condition number of the strongly concave inner problem. }  that were once suppressed logarithmically by linearly convergent deterministic first-order methods  become explicit factors, causing significantly worse complexity dependence on $\varepsilon$ and $\kappa$. 

The above discussion of the limitations of double-loop second-order methods motivates the main question of our research: 
\begin{quote}
    \emph{Can we design a simple single-loop second-order method for NC-SC problems to address the aforementioned issues?} 
\end{quote}
In this paper, we provide an affirmative answer to this question by considering the following smooth Regularized Minimization \eqref{Prob_Min} reformulation of \eqref{Prob_Ori}: 
\begin{equation}
\tag{RM}\label{Prob_Min}
\min_{x\in\Rn, y\in\Rm} \quad \hb(x,y)
:= f(x,y) + \frac{\beta}{2} \|\nabla_y f(x,y)\|^2.
\end{equation}
Then we use $h_\beta$ as the Lyapunov function to drive convergence. 
In a previous work \citep{ma2026linesearch}, we established that problems \eqref{Prob_Min} and \eqref{Prob_Ori} share the same set of first- and second-order stationary points as well as global and local optimal points.
Most importantly, unlike the value function $\Phi$, the evaluation of $\hb$ does not rely on any inner maximization subroutine, paving the way for developing single-loop second-order methods. This removes the inner-loop obstacle, but does not yet allow a direct application of standard second-order methods. 

Indeed, naively applying the standard second-order methods \citep[etc.]{nesterov2006cubic,zhang2025homogeneous} to the regularized minimization problem requires computing the third-order derivatives of $f$, and it leads to theoretical difficulties as these methods typically require the objective Hessian to be Lipschitz continuous, whereas $\nabla^2 \hb$ is not necessarily so. In fact, $\nabla^2 \hb$ is impossible to be Lipschitz continuous in general, as it requires the zeroth-, first-, second-, and third-order Lipschitz continuity of $f$, while the zeroth-order Lipschitz condition is in direct conflict with the strong concavity in the $y$ variable. 

To resolve this bottleneck, we propose a novel adaptive cubic-quadratic (ACQ) majorization that only requires the standard first- and second-order Lipschitz conditions on $f$. Leveraging the special structure of $h_\beta$, the ACQ scheme absorbs the non-Lipschitz components of $\nabla^2h_\beta$ (that involves $\nabla^3f$) into a carefully designed adaptive regularizer, which provably diminishes as the algorithm converges. This strategy allows us to construct a tightly bounded, computationally tractable majorizer without invoking additional third-order smoothness assumptions and third-order derivative computation. 

Building upon the ACQ majorization, our contributions are summarized as follows:\vspace{0.1cm}
\begin{enumerate}[leftmargin=0.5cm, itemsep=1pt, topsep=0pt, parsep=0pt]
    \item \textbf{A single-loop second-order method with local superlinear rate.} For the deterministic NC-SC minimax setting, we introduce Algorithm \ref{alg:cubic-minimax}, a single-loop second-order method that achieves a  condition-number-free local superlinear rate under local nondegeneracy conditions, while matching the state-of-the-art global iteration complexity results (see Table \ref{tab:minimax_complexity}).  
    \item \textbf{A single-loop stochastic second-order method with improved $\varepsilon$-dependence.} For the stochastic NC-SC minimax setting, we introduce Algorithm \ref{alg:stoch-cubic-minimax}, a single-loop stochastic second-order method that avoids the stochastic first-order subroutines in the inner loops required by the previous work \citep{chen2023a}. Utilizing a recursive variance reduction alongside independent sampling techniques, we improve both the gradient and Hessian sample complexities by a factor of $\mathcal{O}(\varepsilon^{-0.5})$. See Table \ref{tab:minimax_complexity}.
    \item \textbf{Resolving the theoretical gap in stochastic nonconvex minimization.} We observe that the state-of-the-art Hessian and gradient sample complexity result \citep{zhou2020stochastic} for stochastic minimization has a nontrivial technical flaw that stems from the structural conflict between a conditional high-probability truncation argument and the martingale-difference argument\footnote{Specifically, in the proofs of Lemmas A.3 and A.4, \citep{zhou2020stochastic} erroneously substitute high-probability bounds directly into Azuma-type vector and matrix concentration inequalities. Conditioning on the events where these bounds hold inherently destroys the underlying martingale difference structure required by the concentration inequalities. This structural conflict cannot be fixed easily. Our paper, in the special case when setting $\beta=0$, can be viewed as an alternative in-expectation analysis via the Burkholder-Davis-Gundy (BDG) inequality.}. The same technical issue repeatedly appears in many of its follow-ups. As an important byproduct of our paper, we specifically present the results of Algorithm \ref{alg:stoch-cubic-minimax} in such a way that, when setting $\beta = 0$, the analysis and sample complexity bounds of the minimax setting automatically apply to the minimization setting, recovering the state-of-the-art sample complexities with rigorous proof. See  Table \ref{tab:minimization_complexity}.
\end{enumerate}

\begin{table}[htbp]
    \centering
    \begin{tabular}{c|c|c|c|c}
        \toprule
        \multicolumn{5}{c}{\textbf{Deterministic Setting}} \\
        \midrule
        \textbf{Reference} & \textbf{Iter.} & \multicolumn{2}{c|}{\textbf{Local superlinear convergence}} & \textbf{Stat.} \\
        \midrule
        \cite{luo2022finding} 
        & $\mathcal{O}(\kappa^{1.5}\sqrt{\rho}\varepsilon^{-1.5})$ & \multicolumn{2}{c|}{\xmark} & $\Phi$ \\
        \cite{wang2025gradient} & $\tilde{\mathcal{O}}(\kappa^{1.5}\sqrt{\rho}\varepsilon^{-1.5})$ & \multicolumn{2}{c|}{\xmark} & $\Phi$ \\
        \cite{chen2026homogeneous} & $\tilde{\mathcal{O}}(\kappa^{1.5}\sqrt{\rho}\varepsilon^{-1.5})$ & \multicolumn{2}{c|}{\xmark} & $\Phi$ \\
        This work (Alg{.} \ref{alg:cubic-minimax}) & $\mathcal{O}(\sqrt{\kappa\rho}\varepsilon^{-1.5})$ & \multicolumn{2}{c|}{\cmark} & $\hb$ \\
        \midrule
        \midrule
        \multicolumn{5}{c}{\textbf{Stochastic Setting}} \\
        \midrule
        \textbf{Reference} & \textbf{Iter.} & \textbf{Grad.} & \textbf{Hess.} & \textbf{Stat.} \\
        \midrule
        \cite{chen2023a} & $\mathcal{O}(\kappa^{1.5}\sqrt{\rho}\varepsilon^{-1.5})$ & $\tilde{\mathcal{O}}(\kappa^{3.5}\sqrt{\rho}\varepsilon^{-3.5})$ & $\tilde{\mathcal{O}}(\kappa^{2.5}L^2\rho^{-0.5}\varepsilon^{-2.5})$ & $\Phi$ \\
        This work (Alg{.} \ref{alg:stoch-cubic-minimax}) & $\mathcal{O}(\sqrt{\kappa\rho}\varepsilon^{-1.5})$ & $\mathcal{O}([L^{1.5}\kappa^{0.5}+\rho]\kappa\varepsilon^{-3})$ & $\mathcal{O}(\kappa L\varepsilon^{-2})$ & $\hb$ \\
        \bottomrule
    \end{tabular}
    \vspace{0.1cm} 
    \caption{Summary of complexity results of the second-order methods in the NC-SC minimax setting. ``Iter.,'' ``Grad.,'' ``Hess.,'' and ``Stat.'' stand for iteration complexity, (stochastic) gradient sample complexity, (stochastic) Hessian sample complexity, and stationarity measure, respectively. $L$ and $\rho$ represent the Lipschitz moduli of $\df$ and $\ddf$, respectively. $\kappa := L/\muy$ denotes the condition number, where $\mu$ is the strong concavity modulus of $f(x,\cdot)$. The double-loop methods in \citep{luo2022finding,chen2023a} find an ($\varepsilon$, $\sqrt{\varepsilon}$)-SOSP of $\Phi$, whereas our single-loop methods find an ($\varepsilon$, $\sqrt{\varepsilon}$)-SOSP of $\hb$. Although an $(\varepsilon, \sqrt{\varepsilon})$-SOSP of $\hb$ corresponds to a $(\kappa\varepsilon, \kappa\sqrt{\varepsilon})$-SOSP of $\Phi$, we emphasize that this transformation only represents a pessimistic worst-case theoretical guarantee.
    \endgraf 
    }
    \label{tab:minimax_complexity}
    \vspace{0.5cm}
    \begin{tabular}{c|c|c|c|c}
        \toprule
        \textbf{Reference} & \textbf{Setting} & \textbf{Iter. } & \textbf{Grad.} & \textbf{Hess.} \\
        \midrule
        \cite{nesterov2006cubic} & Deterministic & $\mathcal{O}(\varepsilon^{-1.5})$ & - & - \\
        \cite{tripuraneni2018stochastic} & Stochastic & $\mathcal{O}(\varepsilon^{-1.5})$ & $\tilde{\mathcal{O}}(\varepsilon^{-3.5})$ & $\tilde{\mathcal{O}}(\varepsilon^{-2.5})$ \\
        \cite{zhou2020stochastic}$^\dagger$ & Stochastic & $\mathcal{O}(\varepsilon^{-1.5})$ & $\tilde{\mathcal{O}}(\varepsilon^{-3})$ & $\tilde{\mathcal{O}}(\varepsilon^{-2})$ \\
        \cite{chayti2025improving} & Stochastic & $\mathcal{O}(\varepsilon^{-3.5})$ & $\mathcal{O}(\varepsilon^{-3.5})$ & $\mathcal{O}(\varepsilon^{-3.5})$ \\
        \cite{yang2025faster}$^*$ & Stochastic & $\mathcal{O}(\varepsilon^{-2.5})$ & Accurate & $\mathcal{O}(\varepsilon^{-2.5})$ \\
        This work (Alg{.} 2 with $\beta = 0$)$^{**}$ & Stochastic & $\mathcal{O}(\varepsilon^{-1.5})$ & $\mathcal{O}(\varepsilon^{-3})$ & $\mathcal{O}(\varepsilon^{-2})$ \\
        \bottomrule
    \end{tabular}
    \vspace{0.1cm} 
    \caption{Summary of complexity results of cubic regularized Newton method for finding an $(\varepsilon, \sqrt{\varepsilon})$-second-order stationary point in the minimization setting. ``Iter.,'' ``Grad.,'' and ``Hess.'' stand for iteration complexity, (stochastic) gradient sample complexity, and (stochastic) Hessian sample complexity, respectively. 
    \endgraf 
    {\noindent\footnotesize
    $\dagger$ The theoretical analysis in \cite{zhou2020stochastic} contains a nontrivial unresolved technical gap.\\
    $*$ The approach in \cite{yang2025faster} requires near-accurate gradient information.\\
    $**$ By setting $\beta = 0$, Algorithm \ref{alg:stoch-cubic-minimax} naturally reduces to a variance-reduced stochastic CRN method for standard nonconvex minimization.
    \label{tab:minimization_complexity}
    }}
\end{table}

\vspace{0.2cm} 
\noindent\textbf{Other related works. }
In this paragraph, we briefly discuss related results on stochastic second-order methods for minimization problems,
as they are closely related to our technical byproduct. In this setting,
\cite{tripuraneni2018stochastic} used sample average approximations to estimate gradients
and Hessians within cubic regularized Newton methods, leading to
$\tilde{\mathcal{O}}(\varepsilon^{-3.5})$  gradient and 
$\tilde{\mathcal{O}}(\varepsilon^{-2.5})$ Hessian sample complexities to find an
$(\varepsilon,\sqrt{\varepsilon})$-second-order stationary point. Multiple works attempt to improve these complexities through momentum or variance reduction techniques \citep[etc.]{wang2019stochastic,zhou2020stochastic,zhang2022adaptive,chayti2025improving,yang2025faster}, among which \cite{zhou2020stochastic} achieves the state-of-the-art $\tilde{\mathcal{O}}(\varepsilon^{-3})$ gradient and
$\tilde{\mathcal{O}}(\varepsilon^{-2})$ Hessian sample complexities. They provided a nontrivial sharp observation on how the batch sizes in recursive
gradient and Hessian estimators should be adaptively coupled with the squared iterate displacement, which is beyond the standard variance reduction literature. Unfortunately, as discussed above, their analysis contains a critical issue, and has already influenced the theoretical rigor of a number of follow-ups. Our results close this gap by providing an alternative strategy of analysis.

\vspace{0.2cm}
\noindent\textbf{Organization. } 
The rest of this paper is organized as follows. Section \ref{sec:preliminaries} introduces the definitions and preliminary results. In Section \ref{sec:cubic-Newton}, we first present our ACQ majorization and introduce the resulting ACQ-regularized Newton method (ACQRN,  Algorithm \ref{alg:cubic-minimax}), we then establish its global and local convergence guarantees. Section \ref{sec:stochastic-cubic} introduces the stochastic variant of our method (S-ACQRN, Algorithm \ref{alg:stoch-cubic-minimax}) and establishes its convergence results. Section \ref{sec:numerical experiments} reports numerical experiments, demonstrating the empirical efficiency of the proposed approach. We conclude in the final section. \vspace{0.2cm}

\noindent\textbf{Notations. }  
For matrix $A$, we denote $\|A\|$ its operator (spectral) norm, and denote $\|A\|_F$ its Frobenius norm. For square matrix $A$, we denote $\lambda_{\min}(A)$ its minimal eigenvalue. $I_d$ denotes a $d\times d$ identity matrix. The transposed Jacobian of a mapping $F: \bb{R}^{d_1} \to \bb{R}^{d_2}$ is denoted as $\nabla F(v) \in \bb{R}^{d_1 \times d_2}$. Namely, let $F_i : \bb{R}^{d_1} \to \bb{R}$ be the $i$-th coordinate of $F$, then $\nabla F(v):= [\nabla F_1(v), ..., \nabla F_{d_2}(v)],$ where each $\nabla F_i(v)$ is a column vector in $\bb{R}^{d_1}$. For $w \in \mathbb{R}^{d_2}$, the second-order directional derivative $\nabla^2 F(v)[w]$ is defined by 
$\nabla^2 F(v)[w] := \sum_{i=1}^{d_2} w_i \nabla^2 F_i(v).$
For $f: \Rn \times \Rm \to \bb{R}$, we define its partial gradient, Hessian, and third (directional) derivative as $\nabla_x f(x,y) := \nabla g_{y}(x) \in \Rn$, $\nabla_{yx}^2 f(x,y) := \nabla g'_{x}(y) \in \bb{R}^{m \times n}$, and $\nabla_{xyx}^3 f(x,y)[w] := \sum_{i=1}^n w_i \nabla^2_{xy} g_{x,i}'(x,y) \in \bb{R}^{n \times m}$, respectively, with $g_{y}(x) = f(x, y)$, $g'_{x}(y) = \nabla_x f(x,y)$, and $g_{x,i}' = (\nabla_x f)_i$.
All the other partial gradients, Hessians, and third derivatives are defined in a similar manner. Finally, for simplicity, we denote $z:=(x,y)$ and $\zk := (\xk, \yk)$.

\section{Preliminaries} \label{sec:preliminaries}
In this section, we formally state our assumptions, introduce the equivalence between the first- and second-order optimality conditions for the minimax problem \eqref{Prob_Ori} and the regularized minimization problem \eqref{Prob_Min}, as well as a few other useful properties. Throughout this paper, we make the following assumptions on \eqref{Prob_Ori}, which are common assumptions for studying second-order methods in the minimax setting, see, e.g., \citep{luo2022finding,chen2023a}.

\begin{assumpt}
\label{Assumption_1}
The function $f$ is thrice continuously differentiable. For any fixed $x$, $f(x,\cdot)$ is $\mu$-strongly concave in $y$. For any fixed $y$, $f(\cdot,y)$ is possibly nonconvex in $x$. We require $f$ to have $L$-Lipschitz gradient and $\rho$-Lipschitz Hessian s.t. for  $\forall z,\xi \in \bb{R}^{n + m}$ we have
\[
    \|\df(z + \xi) - \df(z)\| \leq L\|\xi\| \qaq
    \|\nabla^2 f(z + \xi) - \nabla^2 f(z)\| \leq \rho \|\xi\|.
\]
By default, we assume the value function $\Phi(x)$ to be bounded below: $\bar \Phi := \inf_{x\in\Rn} \Phi(x) > -\infty.$
\end{assumpt}

For the value minimization reformulation \eqref{Prob_value_fcn}, let $\ys(x) := \mathop{\arg\max}_{y \in \Rm} f(x,y)$ denote the unique inner level maximizer. Then the value function $\Phi$ is twice continuously differentiable with $\nabla \Phi(x) = \nabla_x f(\zs)$ and
\begin{equation*}
\nabla^2\Phi(x) ={} \ddfxx(\zs) - \ddfxy(\zs)[\ddfyy(\zs)]^{-1}\ddfyx(\zs).
\end{equation*}
Under Assumption \ref{Assumption_1}, the regularized objective $\hb$ is twice continuously differentiable. For any $\beta>0$, we can compute the gradient and Hessian of $\hb$ by:
\[
\dhb(z)=\big[I_{n+m}+\beta \nabla^2 f(z)P\big]\df(z), \quad \text{where} \quad P:=\begin{pmatrix}0&0\\0&I_m\end{pmatrix},
\]
and $\nabla^2 \hb (z) = \Hb(z) + \Ht(z)$, where
\begin{equation}
\label{eq:Hess-components}
\Hb(z) := \nabla^2 f(z)+\beta \nabla^2 f(z)P\nabla^2 f(z) \qaq \Ht(z) := \beta\nabla^3 f(z)[P\df(z)].
\end{equation}

\noindent From these computations, it is clear that if one directly applies the cubic regularized Newton (CRN) method to $\hb$, it would require an additional Lipschitz assumption on the third-order tensor of derivatives of $f$ and expensive computation of these third-order derivatives. Fortunately, through our ACQ majorization scheme (in the next section), 
our ACQRN method only requires the first- and second-order information of $f$. The third-order derivatives in $\Ht$ are reserved only for theoretical analysis. Furthermore, we remark that the matrix $P$ is introduced primarily for notational convenience and practical computations should follow the formulas given in our previous work \citep{ma2026linesearch}.

Now, we present the definitions of first- and second-order minimax points of problem \eqref{Prob_Ori}, defined through those of the possibly nonconvex value function $\Phi$.

\begin{defin}
    \label{def:opt-cond-ori}
    We say that $(x^*, y^*)$ is a first-order minimax point of \eqref{Prob_Ori} if $\nabla \Phi(x^*) = 0$ and $y^* \in \ys(x^*)$, or equivalently, $\nabla f(x^*, y^*) = 0$. If additionally $\nabla^2\Phi(x^*) \succeq 0$, then we say that $(x^*, y^*)$ satisfies the second-order necessary condition (SONC) of \eqref{Prob_Ori}. If $\nabla^2\Phi(x^*) \succ 0$, then we say that $(x^*, y^*)$ satisfies the second-order sufficient condition (SOSC).
\end{defin} 


For the regularized minimization problem \eqref{Prob_Min}, the first- and second-order stationary points follow the conventions of nonconvex optimization for the joint minimization over $z = (x,y)$, which are stated below for presentation completeness.

\begin{defin}
    \label{def:opt-cond-min}
    We say that $z^* := (x^*, y^*)$ is a first-order stationary point of \eqref{Prob_Min} if $\nabla \hb(z^*) = 0$. If additionally $\nabla^2\hb(z^*) \succeq 0$, then we say that $z^*$ satisfies the SONC of \eqref{Prob_Min}. If $\nabla^2 \hb(z^*) \succ 0$,  then we say that $z^*$ satisfies the SOSC of \eqref{Prob_Min}.
\end{defin}

Finally, we introduce the notion of approximate second-order stationary points for  \eqref{Prob_Min}.

\begin{defin}
    \label{def:fosp-sosp-min}
    Let $\varepsilon, \delta \ge 0$. 
    We call $z^*$ an $(\varepsilon, \delta)$-SOSP (second-order stationary point) of \eqref{Prob_Min} if 
    $
     \|\dhb(z^*)\| \leq \varepsilon \text{ and } \nabla^2 \hb(z^*) \succeq -\delta I_{n+m}.
    $
\end{defin}

To establish iterate convergence and local superlinear convergence for our ACQRN method, we formally introduce the KL property \citep{lojasiewicz1965ensembles,kur98}, a mild assumption on the local geometry of the objective function. It holds for all (smooth) subanalytic and semialgebraic functions and a broad class of practical problems satisfy this property, see, e.g., \citep[Section 4]{AttBolRedSou10} and \citep[Section 5]{BolSabTeb14}.

\begin{defin}[KL property]
    \label{def:Loj-inequality}
    A differentiable function $F: \bb{R}^d \to \bb{R}$ is said to satisfy the KL property at a point $\bar x$ if there exists a neighborhood $U(\bar x)$ of $\bar x$ such that
    \[\hspace{-2ex}\|\nabla{F}(x)\|\geq C_F|F(x)-F(\bar x)|^\theta, \quad \forall~x \in U(\bar x),\]
    where $C_F = C_F(\bar x) >0$ and $\theta = \theta(\bar x) \in[\frac12,1)$ are constants depending on the point $\bar{x}$. In particular,  $\theta$ is called the 
    KL exponent of $F$ at $\bar x$.
\end{defin}

It is worth noting that the KL property holds for more general desingularization functions (see, e.g., \cite{AttBol09,AttBolSva13}) and Definition \ref{def:Loj-inequality} is sometimes called the {\L}ojasiewicz property in the literature. We will stick to Definition \ref{def:Loj-inequality} as it allows explicit asymptotic convergence rates. 

At the end of this section, we present a proposition characterizing the equivalence between \eqref{Prob_Ori} and \eqref{Prob_Min} in terms of lower boundedness, the first- and second-order stationary points, and the KL property, when $\beta > \muy^{-1}$. Since it follows directly from our previous paper \cite[Lemma 2.9, Theorem 3.3, 3.5, and 3.14]{ma2026linesearch}, we omit the proof of Proposition \ref{prop:equivalence}. 

\begin{prop}
    \label{prop:equivalence}
    Suppose Assumption \ref{Assumption_1} holds and $\beta > \muy^{-1}$. Then we have:  
    \begin{enumerate}[leftmargin=0.5cm, itemsep=1pt, topsep=0pt, parsep=0pt]
        \item \textbf{Lower bound equivalence:} the functions $h_\beta$ and $\Phi$ share the same minimal value. 
        \item \textbf{Optimality equivalence:} a point $(x^*, y^*)$ is a first-order minimax point of \eqref{Prob_Ori} iff. it is a first-order stationary point of \eqref{Prob_Min}; and the point $(x^*, y^*)$ satisfies the SONC (or SOSC) of \eqref{Prob_Ori} iff. it satisfies the SONC (or SOSC) of \eqref{Prob_Min}.
        \item \textbf{Local geometry equivalence:} The function $\Phi$ satisfies the KL property at $\bar x$ with exponent $\theta \in [\frac{1}{2},1)$ iff. $\hb$ satisfies the KL property at $(\bar x, \ys(\bar x))$ with the same exponent.
    \end{enumerate}
\end{prop}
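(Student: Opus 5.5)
The plan is to reduce everything to the map $y \mapsto \nabla_y f(x,y)$ and the comparison $\hb(x,y)$ versus $\Phi(x)$ along each fiber $\{x\}\times\Rm$. Fix $x$ and write $g(y) := f(x,y)$, which is $\mu$-strongly concave and $L$-smooth. Then $\hb(x,y) = g(y) + \frac{\beta}{2}\|\nabla g(y)\|^2$. Strong concavity gives $\Phi(x) = \max g \le g(y) + \frac{1}{2\mu}\|\nabla g(y)\|^2$. Hence for $\beta \ge \mu^{-1}$ we get $\hb(x,y) \ge \Phi(x)$, with equality at $y = \ys(x)$. Taking infima over $(x,y)$ yields $\inf \hb = \inf \Phi$, which proves item 1. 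The same inequality holds with a strict margin when $\beta>\mu^{-1}$: $\hb(x,y) - \Phi(x) \ge \frac{\beta-\mu^{-1}}{2}\|\nabla_y f(x,y)\|^2$. This margin is also the key tool for item 3.

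For item 2 I would start from the gradient formula $\dhb(z) = [I + \beta\nabla^2 f(z)P]\nabla f(z)$. The $y$-block of the matrix is $I_m + \beta\nabla^2_{yy} f$, whose eigenvalues are at most $1-\beta\mu<0$, so it is invertible. The matrix $I + \beta\nabla^2 f P$ is block lower triangular, with identity in the $x$-block and $I_m+\beta\nabla^2_{yy}f$ in the $y$-block. Hence it is invertible, and $\dhb=0 \iff \nabla f=0$. Together with Definition \ref{def:opt-cond-ori} this gives the first-order equivalence.

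For the second-order part, at a stationary point $\nabla f(z^*)=0$, so $\Ht(z^*)=0$ and $\nabla^2\hb(z^*) = \Hb(z^*) = H + \beta H P H$ with $H = \nabla^2 f(z^*)$. I would compute the Schur complement of $\Hb$ with respect to its $y$-block. The $y$-block is $A := H_{yy} + \beta(H_{yx}H_{xy} + H_{yy}^2)$. It is positive definite because $H_{yy}+\beta H_{yy}^2 = H_{yy}(I+\beta H_{yy}) \succ 0$, since both factors are negative definite and they commute. A direct block computation should then reduce the Schur complement to a positive-definite congruence of $H_{xx} - H_{xy}H_{yy}^{-1}H_{yx} = \nabla^2\Phi(x^*)$. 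I expect this algebraic identity to be the main obstacle. I would first try writing $\Hb = H(H^{-1} + \beta P)H$ when $H$ is invertible and using inertia (Sylvester) arguments, then pass to the general case by continuity or by working directly with the $y$-block. By Haynsworth inertia additivity, $\Hb\succeq0$ (respectively $\succ0$) iff the Schur complement is, giving SONC/SOSC equivalence.

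For item 3 I would use the margin inequality together with a comparison of gradients near $\bar z=(\bar x,\ys(\bar x))$. One direction needs $\|\dhb(x,y)\| \gtrsim \|\nabla\Phi(x)\| + \|\nabla_y f(x,y)\|$, which follows from the invertibility above and Lipschitz continuity of $\ys$. The other needs $|\hb(x,y)-\hb(\bar z)| \le |\Phi(x)-\Phi(\bar x)| + C\|\nabla_y f\|^2$. Combining these with $\theta\ge\frac12$, so that $\|\nabla_y f\| \gtrsim \|\nabla_y f\|^{2\theta}$ locally, transfers the KL inequality from $\Phi$ to $\hb$. For the converse, restrict to $y=\ys(x)$, where $\hb=\Phi$ and $\dhb$ reduces to $(\nabla\Phi(x),0)$.
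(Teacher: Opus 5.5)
The paper does not prove this proposition itself. It defers entirely to \cite[Lemma 2.9, Theorems 3.3, 3.5, 3.14]{ma2026linesearch}, so your self-contained argument is necessarily a different route. Most of it is sound: item 1, the first-order part of item 2, and both directions of item 3 go through as you describe. Item 1 rests on the inequality $\Phi(x)\le f(x,y)+\frac{1}{2\muy}\|\dfy(x,y)\|^2$, which the paper itself later uses in Lemma \ref{lem:controlled-q-deviation}.

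Two small corrections to those parts. First, $I+\beta\ddf P=\begin{pmatrix} I_n & \beta H_{xy}\\ 0 & I_m+\beta H_{yy}\end{pmatrix}$ is block \emph{upper} triangular, which is harmless for invertibility. Second, in item 3 the bound $\|\dhb\|\gtrsim\|\nabla\Phi\|+\|\dfy\|$ needs $\|\nabla\Phi(x)\|\le\|\nabla_x f(x,y)\|+\kappa\|\dfy(x,y)\|$. That estimate comes from the strong-concavity error bound $\|y-\ys(x)\|\le\muy^{-1}\|\dfy(x,y)\|$, not from Lipschitz continuity of $\ys$. Continuity of $\ys$ is what the converse direction uses, to keep $(x,\ys(x))$ inside the KL neighborhood of $(\bar x,\ys(\bar x))$.

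The genuine problem is in the second-order part, exactly at the step you flagged as the obstacle: your $y$-block is wrong. Since $P$ kills the $x$-coordinates,
\[
HPH=\begin{pmatrix} H_{xy}H_{yx} & H_{xy}H_{yy}\\ H_{yy}H_{yx} & H_{yy}^2\end{pmatrix}.
\]
So $\beta H_{xy}H_{yx}$ belongs to the $x$-block, and the $y$-block is only $A=H_{yy}(I_m+\beta H_{yy})$. The block you wrote is that of $H+\beta H^2$, the Hessian of $f+\frac{\beta}{2}\|\nabla f\|^2$ at a stationary point. Its Schur complement has no clean relation to $\nabla^2\Phi(x^*)$, so the computation as you set it up would not close.

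With the correct blocks,
\[
\Hb(z^*)=\begin{pmatrix} H_{xx}+\beta H_{xy}H_{yx} & H_{xy}(I_m+\beta H_{yy})\\ (I_m+\beta H_{yy})H_{yx} & H_{yy}(I_m+\beta H_{yy})\end{pmatrix}.
\]
Since $H_{yy}$ commutes with $I_m+\beta H_{yy}$, the Schur complement of $A$ is
\[
H_{xx}+\beta H_{xy}H_{yx}-H_{xy}\big(H_{yy}^{-1}+\beta I_m\big)H_{yx}=H_{xx}-H_{xy}H_{yy}^{-1}H_{yx}=\nabla^2\Phi(x^*).
\]
This is an exact equality, not merely a congruence. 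Your argument that $A\succ0$ is fine, so Haynsworth's inertia additivity gives both the SONC and SOSC equivalences at once.

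You should drop the $H(H^{-1}+\beta P)H$-plus-continuity detour. It is unnecessary, and the limiting argument only yields the easy direction, $\nabla^2\Phi(x^*)\succeq0\Rightarrow\Hb(z^*)\succeq0$, by closedness of the PSD cone. For the converse, negative eigenvalues of the perturbed matrices may shrink to zero in the limit. You would then need the test vector $w=(v,-H_{yy}^{-1}H_{yx}v)$, for which $w^\top\Hb(z^*)w=v^\top\nabla^2\Phi(x^*)v$; that is the Schur-complement argument anyway.
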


\section{An adaptive cubic-quadratic regularized Newton method for \texorpdfstring{\eqref{Prob_Min}}{(RM)}}
\label{sec:cubic-Newton}
Given the preliminary discussion, in this section, we develop the deterministic single-loop framework for the minimax problem \eqref{Prob_Ori} based on the regularized reformulation (RM). We will start with the motivation and derivation of the adaptive cubic-quadratic (ACQ) majorization scheme, and then introduce the resulting adaptive cubic-quadratic regularized Newton (ACQRN) method and establish its global and local convergence results. 
\subsection{The ACQ majorization for regularized minimization}
\label{subsec:properties-of-h}
Before introducing the ACQ majorization, it is helpful to first investigate the cubic regularization scheme \citep{nesterov2006cubic} for $h_\beta$, which takes the form 
\[
\hat{h}_\beta(z;\xi):=h_\beta(z)+\nabla h_\beta(z)^\top \xi
+\frac{1}{2}\xi^\top \nabla^2 h_\beta(z)\xi
+\frac{M}{6}\|\xi\|^3.
\]
This is a standard second-order framework for nonconvex problems as it offers global convergence guarantees with favorable rates. Its line-search-free structure also makes it particularly suitable in stochastic settings. However, as shown in Section \ref{sec:preliminaries}, evaluating $\nabla^2 h_\beta(z)$ involves the computation of third-order derivatives, cf. \eqref{eq:Hess-components}. Moreover, the parameter $M$ should match the Lipschitz constant of $\nabla^2 h_\beta$, which, unfortunately, is impossible without additionally assuming the third- and zeroth-order Lipschitz continuity of $f$, while the latter conflicts with strong concavity in $y$. 

The above theoretical and computational weaknesses motivate us to derive a more flexible majorization scheme that is free from these bottlenecks. To begin with, we investigate the boundedness and Lipschitz continuity of the Hessian components $\Ht$ and $\Hb$, respectively. 

\begin{lem}
    \label{lem:surrogate-hessian-properties}
    Under Assumption \ref{Assumption_1} and $\beta > 0$, for any $z,\xi \in \bb{R}^{n+m}$, it holds that
    \[
    \|\Ht(z)\| \leq \beta\rho \|\dfy(z)\| \qaq \|\Hb(z + \xi) - \Hb(z)\| \leq (2\beta L + 1)\rho \|\xi\|.
    \]
\end{lem}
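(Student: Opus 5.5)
The two bounds are independent, so we prove them separately. Both follow from Assumption \ref{Assumption_1}, using the definitions in \eqref{eq:Hess-components} and the triangle inequality.

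\textbf{Bound on $\Ht$.} The key step is to write $\Ht(z)=\beta\nabla^3 f(z)[P\df(z)]$ as a directional derivative of the Hessian. For a direction $w$, $\nabla^3 f(z)[w]=\lim_{t\to0}\frac{1}{t}\big(\ddf(z+tw)-\ddf(z)\big)$, which exists since $f$ is thrice continuously differentiable. The $\rho$-Lipschitz continuity of $\ddf$ then gives $\|\nabla^3 f(z)[w]\|\le\rho\|w\|$ for every $w$. We take $w=P\df(z)=(0,\dfy(z))$, for which $\|w\|=\|\dfy(z)\|$. This yields $\|\Ht(z)\|\le\beta\rho\|\dfy(z)\|$. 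The only care needed is to confirm that the paper's convention $\nabla^2F(v)[w]=\sum_i w_i\nabla^2F_i(v)$, applied with $F=\df$, agrees with this directional-derivative interpretation. It does, because $\nabla^3 f$ is symmetric when $f\in C^3$.

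\textbf{Lipschitz bound on $\Hb$.} Write $A=\ddf(z+\xi)$ and $B=\ddf(z)$. The first term of $\Hb$ contributes $\|A-B\|\le\rho\|\xi\|$. For the second term we use the telescoping split
\[
APA-BPB=(A-B)PA+BP(A-B).
\]
Since $\|P\|=1$ and $\|A\|,\|B\|\le L$ (because $\df$ is $L$-Lipschitz), this gives $\|APA-BPB\|\le 2L\rho\|\xi\|$. Adding the two contributions gives $\|\Hb(z+\xi)-\Hb(z)\|\le(1+2\beta L)\rho\|\xi\|$, as claimed.

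There is no real obstacle here; the only step that takes any thought is justifying the operator-norm bound on the third derivative from the Lipschitz continuity of the Hessian.
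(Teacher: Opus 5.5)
Your proof is correct and follows the paper's argument. For $\Hb$ you use the same splitting $(A-B)PA+BP(A-B)$ with $\|P\|\le 1$ and $\|\ddf\|\le L$, and for $\Ht$ the same bound $\|\nabla^3 f(z)[w]\|\le\rho\|w\|$ with $w=P\df(z)$; your directional-derivative justification of that step, including the symmetry check from $f\in C^3$, makes explicit what the paper states in one line.
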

\begin{proof}
    
    Invoking \eqref{eq:Hess-components}, the triangular inequality, and the $\rho$-Lipschitz continuity of $\ddf$, yields
    \[
    \begin{aligned}
    &\|\Hb(z+\xi)-\Hb(z)\|\\
    \leq& \|\nabla^2 f(z+\xi)-\nabla^2 f(z)\|+\beta\|\nabla^2 f(z+\xi)P\nabla^2 f(z+\xi)-\nabla^2 f(z)P\nabla^2 f(z)\|\\
    \leq& \rho\|\xi\|
    +\beta\|[\nabla^2 f(z+\xi)-\nabla^2 f(z)]P\nabla^2 f(z+\xi)\|+\beta\|\nabla^2 f(z)P[\nabla^2 f(z+\xi)-\nabla^2 f(z)]\|\\
    \leq& \rho\|\xi\|
    +\beta\|\nabla^2 f(z+\xi)-\nabla^2 f(z)\| \cdot \|\nabla^2 f(z+\xi)\|+\beta\|\nabla^2 f(z)\| \cdot \|\nabla^2 f(z+\xi)-\nabla^2 f(z)\|\\
    \leq& (2\beta L+1)\rho\|\xi\|,
    \end{aligned}
    \]
    where the third inequality is due to $\|P\| \leq 1$ and the last line follows from the $L$-smoothness of $f$.
    Moreover, 
    using \eqref{eq:Hess-components} and the $\rho$-Lipschitz continuity of $\ddf$, we have
    \[
    \|\Ht(z)\|\leq \beta\rho\|P\df(z)\|=\beta\rho\|\dfy(z)\|.
    \]
    This completes the proof.
\end{proof}

With Lemma \ref{lem:surrogate-hessian-properties}, we are now ready to derive the ACQ majorization scheme. The core idea is to use the Lipschitz continuous component $\bar H$ of $\nabla^2 \hb$ as a surrogate Hessian, whereas 
an adaptive quadratic regularization term $\frac{\beta\rho}{2}\|\nabla_yf(z)\|\|\xi\|^2$ majorizes the non-Lipschitz component $\tilde H$ of the Hessian $\nabla^2 \hb$.
\begin{prop}
    \label{prop:descent-lemma}
    Under Assumption \ref{Assumption_1} and $\beta > 0$, for any $z, \xi \in \bb{R}^{n+m}$, it holds that
    \[
    \bigg|\hb(z + \xi) - \hb(z) - \dhb(z)^\top\xi - \frac{1}{2} \xi^\top \Hb(z) \xi \bigg| \leq \frac{(3\beta L+1)\rho}{6} \|\xi\|^3 + \frac{\beta \rho}{2} \|\dfy(z)\| \|\xi\|^2.
    \]
\end{prop}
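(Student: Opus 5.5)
We write the remainder through Taylor's theorem with integral form, using $\nabla^2\hb = \Hb + \Ht$ from \eqref{eq:Hess-components}. Since $\hb$ is twice continuously differentiable under Assumption \ref{Assumption_1}, we have
\[
\hb(z+\xi)-\hb(z)-\dhb(z)^\top\xi-\tfrac12\xi^\top\Hb(z)\xi
=\int_0^1(1-t)\,\xi^\top\big[\nabla^2\hb(z+t\xi)-\Hb(z)\big]\xi\,dt .
\]
We then split the bracket as $[\Hb(z+t\xi)-\Hb(z)] + \Ht(z+t\xi)$ and bound the two pieces separately.

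For the first piece, Lemma \ref{lem:surrogate-hessian-properties} gives $\|\Hb(z+t\xi)-\Hb(z)\|\le(2\beta L+1)\rho t\|\xi\|$. Integrating with $\int_0^1(1-t)t\,dt=\frac16$ yields a contribution of at most $\frac{(2\beta L+1)\rho}{6}\|\xi\|^3$.

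For the second piece, Lemma \ref{lem:surrogate-hessian-properties} gives $\|\Ht(z+t\xi)\|\le\beta\rho\|\dfy(z+t\xi)\|$. The key step is to transfer this bound back to the base point $z$. Since $\df$ is $L$-Lipschitz, $\|\dfy(z+t\xi)\|\le\|\dfy(z)\|+Lt\|\xi\|$. Hence
\[
\Big|\int_0^1(1-t)\,\xi^\top\Ht(z+t\xi)\xi\,dt\Big|\le \beta\rho\|\xi\|^2\int_0^1(1-t)\big(\|\dfy(z)\|+Lt\|\xi\|\big)dt
=\tfrac{\beta\rho}{2}\|\dfy(z)\|\|\xi\|^2+\tfrac{\beta L\rho}{6}\|\xi\|^3 .
\]

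Adding the two contributions gives the cubic coefficient $\frac{(2\beta L+1)\rho+\beta L\rho}{6}=\frac{(3\beta L+1)\rho}{6}$ together with the adaptive quadratic term $\frac{\beta\rho}{2}\|\dfy(z)\|\|\xi\|^2$. This matches the statement exactly, and the absolute value is handled because each bound controls the integrand in norm.

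The only delicate point is the second piece: $\Ht$ itself is not Lipschitz, so it cannot be compared against $\Ht(z)$. It must instead be bounded directly in magnitude at the intermediate points $z+t\xi$. The Lipschitz bound on $\dfy$ then lets us pay an extra $\frac{\beta L\rho}{6}\|\xi\|^3$, which is where the "$3\beta L$" in the cubic coefficient comes from.
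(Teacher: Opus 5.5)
Your proof is correct and follows the paper's argument. The paper writes the remainder as the double integral $\int_0^1\int_0^1 \xi^\top[\Ht(z+st\xi)+\Hb(z+st\xi)-\Hb(z)]\xi\,ds\,t\,dt$, which becomes your $(1-t)$-weighted Taylor remainder under the substitution $u=st$. It then uses the same split, bounds $\Ht$ in magnitude at the intermediate points, and moves $\|\dfy\|$ back to $z$ through the $L$-Lipschitz gradient, exactly as you do.
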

\begin{proof}
    By the fundamental theorem of calculus applied to $t\mapsto h_\beta(z+t\xi)$, it holds that
    \begin{equation*}
    \begin{aligned}
    &\bigg|\hb(z + \xi) - \hb(z) - \dhb(z)^\top\xi - \frac{1}{2} \xi^\top \Hb(z) \xi \bigg|\\ 
    =&  \bigg|\int_0^1 \xi^\top [\dhb(z+t\xi) - \dhb(z)] dt - \frac{1}{2} \xi^\top \Hb(z)\xi \bigg|\\ 
    \overset{(a)}{=}& \bigg|\int_0^1 \bigg\{\int_0^1 \xi^\top [\tilde H(z+st\xi) + \Hb(z + st\xi) ] \xi ds \bigg\} t dt - \frac{1}{2} \xi^\top \Hb(z)\xi \bigg|\\
    =& \bigg|\int_0^1 \bigg\{\int_0^1 \xi^\top [\tilde H(z+st\xi) + \Hb(z + st\xi) -\Hb(z) ] \xi ds \bigg\} t dt \bigg|\\
    \overset{(b)}{\leq}& \|\xi\|^2 \int_0^1 \bigg\{\int_0^1 \|\tilde H(z+st\xi)\| + \|\Hb(z + st\xi) -\Hb(z) \| ds \bigg\} t dt\\
    \overset{(c)}{\leq}&
    \|\xi\|^2 \int_0^1 \bigg\{\int_0^1 \beta\rho\|\dfy(z+st\xi)\| + (2\beta L + 1)\rho \|\xi\| st ds \bigg\} t dt
    \\
    \overset{(d)}{\leq}&
    \|\xi\|^2 \int_0^1 \bigg\{\int_0^1 \beta\rho[\|\dfy(z)\| + L\|\xi\|st] + (2\beta L + 1)\rho \|\xi\| st ds \bigg\} t dt
    \\
    =&
    \frac{\beta\rho}{2} \|\xi\|^2 \|\dfy(z)\| + \frac{(3\beta L+1)\rho}{6} \|\xi\|^3,
\end{aligned}
\end{equation*}
where line (a) follows, again,  from the fundamental theorem of calculus applied to
$s\mapsto \nabla h_\beta(z+st\xi)$ and the fact that
$\nabla^2 h_\beta=\tilde H+\bar H$, (b) is due to the triangle inequality, (c) is due to Lemma \ref{lem:surrogate-hessian-properties}, and (d) is due to the $L$-smoothness of $f$.
\end{proof}

At the end of this subsection, we present a lemma on the first-order approximation of $\dhb$ using the surrogate Hessian $\Hb$.

\begin{lem}
    \label{lem:first-order-approx-of-grad}
    Under Assumption \ref{Assumption_1} and $\beta > 0$, for any $z, \xi \in \bb{R}^{n+m}$, it holds that
    \[
    \|\nabla \hb(z + \xi) - \nabla \hb(z) - \Hb(z) \xi\| \leq \beta\rho \|\dfy(z)\|\|\xi\| + \frac{3\beta L+1}{2} \rho \|\xi\|^2.
    \]
\end{lem}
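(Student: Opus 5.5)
The argument mirrors the proof of Proposition \ref{prop:descent-lemma}, one order lower. We apply the fundamental theorem of calculus to the vector-valued map $t\mapsto \dhb(z+t\xi)$ on $[0,1]$, using that $\hb$ is twice continuously differentiable under Assumption \ref{Assumption_1}. This gives
\[
\nabla \hb(z + \xi) - \nabla \hb(z) - \Hb(z) \xi = \int_0^1 \big[\nabla^2\hb(z+t\xi) - \Hb(z)\big]\xi \, dt .
\]
Next we use the decomposition $\nabla^2\hb = \Hb + \Ht$ from \eqref{eq:Hess-components}. It lets us rewrite the integrand as $\big[\Ht(z+t\xi) + \Hb(z+t\xi) - \Hb(z)\big]\xi$.

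We then take norms, move the norm inside the integral, and apply the triangle inequality. This bounds the left-hand side by
\[
\|\xi\|\int_0^1 \Big(\|\Ht(z+t\xi)\| + \|\Hb(z+t\xi)-\Hb(z)\|\Big)\,dt .
\]
Lemma \ref{lem:surrogate-hessian-properties} controls both pieces. The first satisfies $\|\Ht(z+t\xi)\|\le \beta\rho\|\dfy(z+t\xi)\|$. The second satisfies $\|\Hb(z+t\xi)-\Hb(z)\|\le (2\beta L+1)\rho\, t\|\xi\|$.

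The only remaining step is to re-anchor the non-Lipschitz term at $z$ rather than at $z+t\xi$. We do this with the $L$-smoothness of $f$, which gives $\|\dfy(z+t\xi)\|\le \|\dfy(z)\| + L t\|\xi\|$, since $\dfy$ is a sub-block of $\df$. Substituting both bounds, the integrand becomes
\[
\beta\rho\|\dfy(z)\| + \big(\beta L\rho + (2\beta L+1)\rho\big)t\|\xi\| .
\]
Integrating over $t\in[0,1]$ uses $\int_0^1 t\,dt = \tfrac12$. Multiplying by $\|\xi\|$ then yields $\beta\rho\|\dfy(z)\|\|\xi\| + \frac{(3\beta L+1)\rho}{2}\|\xi\|^2$, which is exactly the claimed bound.

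No step is a real obstacle. The one point needing care is the re-anchoring of $\|\dfy(z+t\xi)\|$ at $z$: it is what merges the extra $\beta L\rho$ contribution with the $(2\beta L+1)\rho$ coefficient to produce the constant $3\beta L+1$.
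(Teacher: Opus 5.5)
Your proposal is correct and follows essentially the same route as the paper's proof. Both apply the fundamental theorem of calculus to $t\mapsto\dhb(z+t\xi)$, split the Hessian as $\Ht+\Hb$, and bound the two pieces with Lemma \ref{lem:surrogate-hessian-properties}; both then re-anchor $\|\dfy(z+t\xi)\|$ at $z$ via $L$-smoothness and integrate to obtain the constant $(3\beta L+1)\rho/2$.
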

\begin{proof}
Applying the fundamental theorem of calculus to $t\mapsto \nabla h_\beta(z+t\xi)$ yields 
\begin{equation*}
    \begin{aligned}
    \|\nabla \hb(z + \xi) - \nabla &\hb(z) - \Hb(z) \xi\|
    ={} \bigg\|\int_0^1 [\tilde H(z+t\xi) + \Hb(z+t\xi) - \Hb(z)] \xi dt\bigg\|\\
    \leq{}&
    \int_0^1 \beta\rho\|\dfy(z+t\xi)\|\|\xi\| + (2\beta L + 1)\rho \|\xi\|^2t dt
    \\
    \leq{}&
    \int_0^1 \beta\rho[\|\dfy(z)\|\|\xi\| + Lt\|\xi\|^2]+ (2\beta L + 1)\rho \|\xi\|^2t dt
    \\
    ={}&
    \beta\rho \|\dfy(z)\|\|\xi\| + \frac{3\beta L+1}{2} \rho \|\xi\|^2,
    \end{aligned}
\end{equation*}    
where the first inequality follows from the triangle inequality and Lemma \ref{lem:surrogate-hessian-properties}, and the second inequality is due to the $L$-smoothness of $f$.
\end{proof}

\subsection{A single-loop ACQRN method}
\label{subsec:deter-CRN-method}
In this subsection, we formally introduce our ACQRN method in Algorithm \ref{alg:cubic-minimax} for the minimax problem \eqref{Prob_Ori}. In each iteration $k$, the update takes the form $\zkp = \zk + \xik$, where $\xik$ is a solution of the subproblem \eqref{eq:cubic-subproblem} based on the ACQ majorization. Notably, since $\Hb$ only involves the Hessian of $f$, our cubic regularized subproblem avoids the non-Lipschitz third-order term $\tilde H$ in the full Hessian of $\hb$. 
Hence, it is computationally comparable to the cubic regularized subproblem in the standard minimization setting. As for the parameter choices, by Proposition \ref{prop:equivalence}, we require $\beta > \muy^{-1}$ to guarantee the equivalence between the minimax problem \eqref{Prob_Ori} and the regularized minimization \eqref{Prob_Min}. Furthermore, by the ACQM scheme (Proposition \ref{prop:descent-lemma}), when the regularization parameters $\alo, \alt > 0$ are sufficiently large, the function value $\hb(\zk)$ has guaranteed descent in every iteration. We will justify the specific choices of $\alo, \alt$ later. 

\begin{algorithm}[ht]
\caption{Adaptive cubic-quadratic regularized Newton method for minimax problems}
\label{alg:cubic-minimax}
\begin{algorithmic}[1]
\State{\textbf{Initialization:} Initialize $z_0$ and parameters $\beta > \muy^{-1}$, $\alo = 2\beta\rho$ and $\alt =2(3\beta L+1)\rho$.}
\For{$k = 0, 1, 2, \ldots$}
\State{Solve the ACQ majorized subproblem
\begin{equation}
    \label{eq:cubic-subproblem}
    \xik \in \argmin_{\xi \in \bb{R}^{n+m}} \dhb(z_k)^\top\xi + \frac{1}{2} \xi^\top \Big(\Hb(z_k) + \alo \|\dfy(z_k)\| I_{n+m}\Big) \xi + \frac{\alt}{6} \|\xi\|^3.
\end{equation}}
\vspace{-3mm}
\State{$\zkp \gets \zk + \xik$} 
\EndFor
\end{algorithmic}
\end{algorithm} 

In the rest of this section, we present some preliminary analysis of  Algorithm \ref{alg:cubic-minimax}. Instead of directly analyzing the subproblem \eqref{eq:cubic-subproblem}, we study the following general subproblem:
\begin{equation}
    \label{eq:cubic-subproblem-general}
    \begin{aligned}
        \xi^+ = \argmin_{\xi \in \bb{R}^{n + m}} g^\top \xi + \frac{1}{2}\xi^\top \Big(H + \alo \|g_y\|I_{n+m}\Big)\xi + \frac{\alt}{6}\|\xi\|^3 \qaq
        z^+ = z + \xi^+,
    \end{aligned}
\end{equation}
which allows $g, g_y$, and $H$ to deviate from the problem data $\dhb(z)$, $\dfy(z)$, and $\Hb(z)$, respectively. This prepares us for the analysis under the stochastic setting in the subsequent section. We begin with the (global) optimality conditions for the ACQ regularized subproblem \eqref{eq:cubic-subproblem-general}, which follows the results of \citep{nesterov2006cubic,cartis2011adaptivea}. 

\begin{lem}[{\citealp{nesterov2006cubic}}]
    \label{lem:opt-cond-cubic-subp}
    Suppose Assumption \ref{Assumption_1} holds,  $\beta > 0$, and $H$ is symmetric, then $\xi^+$ is a solution of the cubic regularized subproblem \eqref{eq:cubic-subproblem-general} if and only if
    \[\begin{aligned}
    g + \Big\{H + \Big[\alo \|g_y\| + \frac{\alt}{2} \|\xi^+\|\Big] I_{n+m}\Big\}\xi^+ = 0
    \qaq
    H + \Big[\alo \|g_y\| + \frac{\alt}{2} \|\xi^+\| \Big] I_{n+m} \succeq 0.
    \end{aligned}\]
\end{lem}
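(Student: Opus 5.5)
The plan is to reduce the statement to the classical characterization of global minimizers of a cubic-regularized quadratic model (Nesterov--Polyak, Cartis--Gould--Toint). Set $\sigma := \alo\|g_y\| \ge 0$ and $A := H + \sigma I_{n+m}$. The objective in \eqref{eq:cubic-subproblem-general} is then $m(\xi) = g^\top\xi + \frac12 \xi^\top A\xi + \frac{\alt}{6}\|\xi\|^3$. Since $g_y$ enters only through the scalar $\sigma$ and $A$ is symmetric, it suffices to show the following: $\xi^+$ globally minimizes $m$ iff $g + (A + \frac{\alt}{2}\|\xi^+\|I)\xi^+ = 0$ and $A + \frac{\alt}{2}\|\xi^+\| I \succeq 0$. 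Substituting $A$ back gives the stated conditions.

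For \emph{sufficiency}, let $\lambda := \frac{\alt}{2}\|\xi^+\|$. For any $\xi$, use the first condition to eliminate $g$ and expand $m(\xi) - m(\xi^+)$. A direct computation gives the identity
\[
m(\xi) - m(\xi^+) = \frac12 (\xi-\xi^+)^\top (A + \lambda I)(\xi - \xi^+) + \frac{\alt}{12}\big(\|\xi\| - \|\xi^+\|\big)^2\big(\|\xi\| + 2\|\xi^+\|\big).
\]
To verify it, write $\frac{\alt}{6}\|\xi\|^3 - \frac{\alt}{6}\|\xi^+\|^3 - \frac{\lambda}{2}(\|\xi\|^2 - \|\xi^+\|^2)$ and compare with the $\lambda$-terms generated from $-(A+\lambda I)\xi^+$ in place of $g$. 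Both terms on the right are nonnegative by the second condition, so $\xi^+$ is a global minimizer.

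For \emph{necessity}, the first condition is just $\nabla m(\xi^+) = 0$, since $\nabla \frac{\alt}{6}\|\xi\|^3 = \frac{\alt}{2}\|\xi\|\xi$. The second condition is the main obstacle, because the local second-order condition only gives $A + \lambda I + \frac{\alt}{2}\|\xi^+\|^{-1}\xi^+\xi^{+\top} \succeq 0$, which is weaker. I would follow Nesterov--Polyak's argument as follows.

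\emph{Case $\xi^+ = 0$.} Then $g = 0$ and $m(\xi) = \frac12\xi^\top A\xi + \frac{\alt}{6}\|\xi\|^3 \ge 0$ for all $\xi$. Taking $\xi = t v$ with $t \downarrow 0$ forces $v^\top A v \ge 0$, so $A \succeq 0$.

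\emph{Case $\xi^+ \neq 0$.} Compare $\xi^+$ with any $\xi$ on the sphere $\|\xi\| = \|\xi^+\|$. The cubic term is constant there, so the identity above, which used only the first condition, gives $m(\xi) - m(\xi^+) = \frac12(\xi - \xi^+)^\top(A + \lambda I)(\xi - \xi^+) \ge 0$. The directions $\xi - \xi^+$ with $\|\xi\| = \|\xi^+\|$ cover, up to positive scaling, every $w$ with $w^\top \xi^+ \ne 0$: take $\xi = \xi^+ - \tau w$ with $\tau = 2 w^\top\xi^+/\|w\|^2$. The remaining directions $w \perp \xi^+$ then follow by a density and continuity argument. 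This yields $A + \lambda I \succeq 0$ and completes the proof.
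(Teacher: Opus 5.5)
Your proposal is correct and follows the paper's route. The paper gives no argument of its own: it absorbs $\alo\|g_y\|$ into the matrix and cites the Nesterov--Polyak / Cartis--Gould--Toint characterization, whose standard proof is what you reproduce (sufficiency from the exact expansion of $m(\xi)-m(\xi^+)$, necessity via the sphere $\|\xi\|=\|\xi^+\|$ plus density). One small slip: the cubic remainder in your identity should be $\frac{\alt}{12}(\|\xi\|-\|\xi^+\|)^2(2\|\xi\|+\|\xi^+\|)$, not $(\|\xi\|+2\|\xi^+\|)$; for example, with $A=0$, $\alt=6$, $\xi^+=1$, $\xi=0$ in one dimension the gap is $2$, not $2.5$. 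Since you only use that this term is nonnegative and vanishes on that sphere, the argument is unaffected.
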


Motivated by the ACQ scheme (Proposition \ref{prop:descent-lemma}) and the optimality conditions of the subproblem \eqref{eq:cubic-subproblem-general}, we have the following descent property in function value. It is worth noting that the error terms $\|\dhb(z) - g\|$, $\|\bar H(z) - H\|$, and $\|g_y - \dfy(z)\|$ on the RHS vanish in the deterministic setting, when the gradient and Hessian information of $f$ are accurate. Furthermore, Lemma \ref{lem:func-value-descent} implies that as long as $\alo > \beta\rho$ and $\alt > 4(3\beta L+1)\rho/3$, Algorithm \ref{alg:cubic-minimax} has guaranteed descent in each iteration. 

\begin{lem}
    \label{lem:func-value-descent}
    Suppose Assumption \ref{Assumption_1} holds,  $\beta > 0$, and $H$ is symmetric. Let $\xi^+$ and $z^+$ be as given in \eqref{eq:cubic-subproblem-general}. Then, setting $c:=(3\beta L+1)\rho/6$, it holds that
    \begin{equation*}
        \begin{aligned}
        \hb(z^+) - \hb(z)
        &\leq 
        -\Big(\frac{\alo}{2} - \frac{\beta\rho}{2}\Big) \|\dfy(z)\| \|\xi^+\|^2 - \Big(\frac{\alt}{4} - 2c \Big) \|\xi^+\|^3 + \\
        &\hspace{-1.5cm}  
        \frac{2}{3\sqrt{c}}\|\dhb(z) - g\|^{\frac32}
        + \frac{1}{6c^2} \|\bar H(z) - H\|^3 + \frac{\alo^3}{6c^2} \|\dfy(z) - g_y\|^3.
        \end{aligned}
    \end{equation*}
\end{lem}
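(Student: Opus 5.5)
We start from the ACQ majorization of Proposition~\ref{prop:descent-lemma} with $\xi=\xi^+$. This gives
\[
\hb(z^+)-\hb(z)\le \dhb(z)^\top\xi^+ + \tfrac12 (\xi^+)^\top \Hb(z)\xi^+ + c\|\xi^+\|^3 + \tfrac{\beta\rho}{2}\|\dfy(z)\|\|\xi^+\|^2 .
\]
Next we insert the inexact data. We write $\dhb(z)=g+(\dhb(z)-g)$, $\Hb(z)=H+(\Hb(z)-H)$, and $\|\dfy(z)\| = \|g_y\| + (\|\dfy(z)\|-\|g_y\|)$. The model part then becomes $g^\top\xi^+ + \frac12(\xi^+)^\top H\xi^+$. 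Lemma~\ref{lem:opt-cond-cubic-subp} gives
\[
g^\top\xi^+ = -(\xi^+)^\top H\xi^+ - \big(\alo\|g_y\|+\tfrac{\alt}{2}\|\xi^+\|\big)\|\xi^+\|^2 .
\]
Combining this with the curvature condition $(\xi^+)^\top H\xi^+\ge -(\alo\|g_y\|+\frac{\alt}{2}\|\xi^+\|)\|\xi^+\|^2$ yields the standard estimate
\[
g^\top\xi^+ + \tfrac12(\xi^+)^\top H\xi^+ \le -\tfrac{\alo}{2}\|g_y\|\|\xi^+\|^2 - \tfrac{\alt}{4}\|\xi^+\|^3 .
\]

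The remaining terms mix $\|g_y\|$ with $\|\dfy(z)\|$. We convert $-\frac{\alo}{2}\|g_y\|\|\xi^+\|^2$ into $-\frac{\alo}{2}\|\dfy(z)\|\|\xi^+\|^2 + \frac{\alo}{2}\|\dfy(z)-g_y\|\|\xi^+\|^2$ using the reverse triangle inequality. This gives the main term $-(\frac{\alo}{2}-\frac{\beta\rho}{2})\|\dfy(z)\|\|\xi^+\|^2 - (\frac{\alt}{4}-c)\|\xi^+\|^3$. Three error terms remain:
\[
\|\dhb(z)-g\|\|\xi^+\|,\qquad \tfrac12\|\Hb(z)-H\|\|\xi^+\|^2,\qquad \tfrac{\alo}{2}\|\dfy(z)-g_y\|\|\xi^+\|^2 .
\]

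The main step is to absorb these three error terms with Young's inequality, spending a total budget of exactly $c\|\xi^+\|^3$ to reach the coefficient $\frac{\alt}{4}-2c$. We do this as follows.
\begin{itemize}
\item For the gradient error, Young's inequality with exponents $(3/2,3)$ gives $ab\le \frac{2}{3\sqrt{c}}a^{3/2} + \frac{c}{3}b^3$. To check: with $a=\|\dhb(z)-g\|$ and $b=\|\xi^+\|$, set $ab=(\lambda a)(b/\lambda)$ and choose $\lambda$ so that the $b^3$ coefficient equals $c/3$. This produces exactly $\frac{2}{3\sqrt c}$.
\item For the Hessian error, Young's inequality with exponents $(3,3/2)$ gives $\frac12 a b^2\le \frac{1}{6c^2}a^3+\frac{c}{3}b^3$ after tuning.
\item For the $g_y$ error, the same split gives $\frac{\alo}{2}ab^2\le \frac{\alo^3}{6c^2}a^3+\frac c3 b^3$.
\end{itemize}
The three $\frac c3\|\xi^+\|^3$ contributions sum to $c\|\xi^+\|^3$, which gives the claim. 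The only real care needed is this constant bookkeeping. For instance, $\frac{\alo}{2}ab^2\le \frac{1}{3}\big(\frac{\alo}{2}\big)^3 s^{-3}a^3 + \frac23 s^{3/2}b^3$, and choosing $s$ with $\frac23 s^{3/2}=\frac c3$ gives $s^{3/2}=c/2$. Then $\frac13\cdot\frac{\alo^3}{8}\cdot\frac{4}{c^2}=\frac{\alo^3}{6c^2}$, which matches the statement. The Hessian term follows by setting $\alo=1$ in this computation.
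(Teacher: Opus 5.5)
Your proposal is correct and follows essentially the same route as the paper: combine Proposition~\ref{prop:descent-lemma} with both optimality conditions of Lemma~\ref{lem:opt-cond-cubic-subp}, then use the reverse triangle inequality to pass from $\|g_y\|$ to $\|\dfy(z)\|$, then absorb the three error terms with Young's inequality at a cost of $\frac{c}{3}\|\xi^+\|^3$ each. Your constants, namely $\frac{2}{3\sqrt c}$, $\frac{1}{6c^2}$, $\frac{\alo^3}{6c^2}$ and the final coefficient $\frac{\alt}{4}-2c$, match the paper's exactly.
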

\begin{proof}
    By Proposition \ref{prop:descent-lemma} and the equality in Lemma \ref{lem:opt-cond-cubic-subp}, it holds that
    \begin{equation*}
    \begin{aligned}
    \hb(z^+& )  - \hb(z) 
    \leq{} \dhb(z)^\top \xi^+ + \frac{1}{2} (\xi^+)^\top \Hb(z) \xi^+ + \frac{\beta\rho}{2}\|\dfy(z)\| \|\xi^+\|^2 + c \|\xi^+\|^3 \\
     & - (\xi^+)^\top \Big(g + H \xi^+ + \alo \|g_y\| \xi^+ + \frac{\alt}{2} \|\xi^+\|\xi^+\Big)\\
    ={}& [\dhb(z)-g]^\top \xi^+ + \frac{1}{2} (\xi^+)^\top [\Hb(z)-H] \xi^+ + \frac{\beta\rho}{2}\|\dfy(z)\|   \|\xi^+\|^2 -\\
    & \frac{1}{2} (\xi^+)^\top \Big[H + \Big(\alo \|g_y\| + \frac{\alt}{2} \|\xi^+\|\Big)I_{n+m}\Big]\xi^+ - \frac{\alo}{2} \|g_y\| \|\xi^+\|^2 -   \Big(\frac{\alt}{4} - c\Big)\|\xi^+\|^3\\
    \leq{}& \|\dhb(z)-g\|   \|\xi^+\| + \frac{1}{2} \|\Hb(z)-H\|   \|\xi^+\|^2 + \frac{\alo}{2} \|\dfy(z) - g_y\|  \|\xi^+\|^2 -\\
    & \Big(\frac{\alo}{2} - \frac{\beta \rho}{2}\Big) \|\dfy(z)\| \|\xi^+\|^2 -  \Big(\frac{\alt}{4} - c\Big)\|\xi^+\|^3,
    \end{aligned}        
    \end{equation*}
    where the last line follows from the Cauchy-Schwarz inequality, the reverse triangle inequality and the inequality in Lemma \ref{lem:opt-cond-cubic-subp}. To complete the proof, we utilize the following variant of Young's inequality \[
    ab \leq {p^{-1}(a\theta)^p} + {q^{-1}(b/\theta)^q}, \quad \forall a,b,p,q,\theta > 0 \quad \text{satisfying} \quad p^{-1} + q^{-1} = 1,
    \]
    and get 
    \begin{align*}
        \|\dhb(z)-g\|   \|\xi^+\| &\leq \frac{c}{3} \|\xi^+\|^3 + \frac{2}{3\sqrt{c}} \|\dhb(z)-g\|^{\frac32};\\
        \|\Hb(z)-H\|   \|\xi^+\|^2 &\leq \frac{2c}{3} \|\xi^+\|^3 + \frac{1}{3c^2} \|\Hb(z) - H\|^3;\\
        \|\dfy(z) - g_y\|  \|\xi^+\|^2 &\leq \frac{2c}{3\alo} \|\xi^+\|^3 + \frac{\alo^2}{3c^2} \|\dfy(z) - g_y\|^3.
    \end{align*}
    Combining the previous estimates yields the desired result.
\end{proof}

Next, we establish an upper bound of the gradient size $\|\dhb(z^+)\|$ and  a lower bound of the minimal eigenvalue of the Hessian $\nabla^2 \hb(z^+)$. Due to our adaptive majorization scheme, the bounds additionally involve the partial gradient norm $\|\dfy(z)\|$.

\begin{lem}
    \label{lem:upper-bound-of-h-gradient}
    Under the same assumption and parameter settings of Lemma \ref{lem:func-value-descent}, we have 
    \begin{align*}
        \|\dhb(z^+)\| 
        &\leq  (\beta\rho + \alo)\|\dfy(z)\|   \|\xi^+\| + (4c+\alt/2) \|\xi^+\|^2 + \\
        &\hspace{1cm} \|\dhb(z) - g\| + (2c)^{-1} \|\bar H(z) - H\|^2 + \alo^2 (2c)^{-1} \|\dfy(z) - g_y\|^2.
    \end{align*}
\end{lem}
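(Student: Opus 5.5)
We will bound $\|\dhb(z^+)\|$ by inserting the first-order optimality equation of the subproblem from Lemma \ref{lem:opt-cond-cubic-subp}. That equation gives $g + H\xi^+ + \alo\|g_y\|\xi^+ + \frac{\alt}{2}\|\xi^+\|\xi^+ = 0$, so we may write
\[
\dhb(z^+) = \big[\dhb(z+\xi^+) - \dhb(z) - \Hb(z)\xi^+\big] + [\dhb(z) - g] + [\Hb(z)-H]\xi^+ - \alo\|g_y\|\xi^+ - \tfrac{\alt}{2}\|\xi^+\|\xi^+ .
\]
We then apply the triangle inequality and bound each bracket in turn.

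For the first bracket we invoke Lemma \ref{lem:first-order-approx-of-grad}. It gives the bound $\beta\rho\|\dfy(z)\|\|\xi^+\| + \frac{3\beta L+1}{2}\rho\|\xi^+\|^2$. Since $c = (3\beta L+1)\rho/6$, the second term equals $3c\|\xi^+\|^2$.

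The last two terms together contribute at most $\alo\|g_y\|\|\xi^+\| + \frac{\alt}{2}\|\xi^+\|^2$. In the first of these we replace $\|g_y\|$ by $\|\dfy(z)\| + \|\dfy(z)-g_y\|$ (triangle inequality). This produces the target term $(\beta\rho+\alo)\|\dfy(z)\|\|\xi^+\|$ and leaves the cross term $\alo\|\dfy(z)-g_y\|\|\xi^+\|$. The gradient error $\|\dhb(z)-g\|$ is kept as it is.

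The two remaining cross terms, $\|\Hb(z)-H\|\|\xi^+\|$ and $\alo\|\dfy(z)-g_y\|\|\xi^+\|$, are the only mildly delicate step: they must be absorbed into a $\|\xi^+\|^2$ budget of exactly $c$. We use Young's inequality $ab\le \frac{c}{2}b^2 + \frac{1}{2c}a^2$ on each, which gives
\[
\|\Hb(z)-H\|\|\xi^+\| \le \tfrac{c}{2}\|\xi^+\|^2 + \tfrac{1}{2c}\|\Hb(z)-H\|^2, \qquad \alo\|\dfy(z)-g_y\|\|\xi^+\| \le \tfrac{c}{2}\|\xi^+\|^2 + \tfrac{\alo^2}{2c}\|\dfy(z)-g_y\|^2 .
\]
The two $\tfrac{c}{2}\|\xi^+\|^2$ terms add up to $c\|\xi^+\|^2$.

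Collecting everything, the $\|\xi^+\|^2$ coefficient is $3c + c + \alt/2 = 4c + \alt/2$. The remaining terms are exactly those in the statement, which completes the plan.
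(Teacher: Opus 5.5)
Your proposal is correct and follows essentially the same route as the paper's proof. Both insert the subproblem's first-order optimality equation, bound the Taylor remainder by Lemma \ref{lem:first-order-approx-of-grad} (giving $3c\|\xi^+\|^2$), and split $\|g_y\|$ by the triangle inequality. Both then absorb the two cross terms with Young's inequality, each at cost $\tfrac{c}{2}\|\xi^+\|^2$, to reach the coefficient $4c+\alt/2$.
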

\begin{proof}
    By the triangle inequality, it holds that
    \[\begin{aligned}
    \|\dhb(z^+)\|
    \leq& \|\dhb(z^+) - \dhb(z) - \Hb(z)\xi^+\| + \|\dhb(z)-g\|+\\
    &\hspace{1cm}\|[\Hb(z)-H]\xi^+\|+\|g+H\xi^+\|\\
    \leq& \beta\rho\|\dfy(z)\|\|\xi^+\| + 3c\|\xi^+\|^2 +  \|\dhb(z)-g\|+\|\Hb(z)-H\| \|\xi^+\|+\\
    &\hspace{1cm} \alo \|g_y\| \|\xi^+\| + \alt\|\xi^+\|^2/2\\
    \leq& (\beta\rho + \alo)\|\dfy(z)\|   \|\xi^+\| + (3c + \alt/2) \|\xi^+\|^2 +  \|\dhb(z) - g\| + \\
    &\hspace{1cm} \|\Hb(z) - H\|   \|\xi^+\| + \alo \|\dfy(z)-g_y\|   \|\xi^+\|
    \end{aligned}\]
    where the second inequality is due to Lemma \ref{lem:first-order-approx-of-grad} and the equality in Lemma \ref{lem:opt-cond-cubic-subp} and the last inequality utilizes the triangle inequality.
    Then, invoking Young's inequality gives 
    \begin{align*}
        2\|\Hb(z) - H\|   \|\xi^+\| &\leq c^{-1} \|\Hb(z) - H\|^2 + c\|\xi^+\|^2;\\
        2\alo\|\dfy(z)-g_y\|   \|\xi^+\| &\leq \alo^2c^{-1} \|\dfy(z)-g_y\|^2 + c \|\xi^+\|^2.
    \end{align*}
    Combining the previous estimates yields the desired result.
\end{proof}

\begin{lem}
    \label{lem:lower-bound-surrogate-hessian}
    Under the same assumption and parameter settings of Lemma \ref{lem:func-value-descent}, we have
    \[
    \begin{aligned}
        \lambda_{\min}(\nabla^2 \hb (z^+)) &\geq -[(3\beta L + 1)\rho + \alt/2]\|\xi^+\| - (\beta\rho+\alo) \|\dfy(z)\| - \\
        &\hspace{1.5cm}
        \alo \|\dfy(z) - g_y\| - \|\Hb(z) - H\|.
    \end{aligned}
    \]
\end{lem}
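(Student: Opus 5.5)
The plan is to write $\nabla^2\hb(z^+) = \Hb(z^+) + \Ht(z^+)$ and bound the two pieces separately from below, then transfer the curvature information from the second inequality in Lemma \ref{lem:opt-cond-cubic-subp}. Weyl's inequality gives $\lambda_{\min}(A+B)\ge \lambda_{\min}(A) - \|B\|$. We will use it repeatedly, so that every perturbation only costs its operator norm.

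\textbf{Step 1 (the non-Lipschitz part).} By Lemma \ref{lem:surrogate-hessian-properties}, $\|\Ht(z^+)\|\le\beta\rho\|\dfy(z^+)\|$. The $L$-smoothness of $f$ gives $\|\dfy(z^+)\|\le\|\dfy(z)\|+L\|\xi^+\|$. Hence
\[
\lambda_{\min}(\nabla^2\hb(z^+))\ge\lambda_{\min}(\Hb(z^+))-\beta\rho\|\dfy(z)\|-\beta\rho L\|\xi^+\|.
\]

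\textbf{Step 2 (the Lipschitz part).} By Lemma \ref{lem:surrogate-hessian-properties}, $\|\Hb(z^+)-\Hb(z)\|\le(2\beta L+1)\rho\|\xi^+\|$, and $\|\Hb(z)-H\|$ is kept as an error term. This yields
\[
\lambda_{\min}(\Hb(z^+))\ge\lambda_{\min}(H)-(2\beta L+1)\rho\|\xi^+\|-\|\Hb(z)-H\|.
\]
Together with Step 1, the coefficient of $\|\xi^+\|$ becomes $\beta\rho L+(2\beta L+1)\rho=(3\beta L+1)\rho$, which matches the statement.

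\textbf{Step 3 (curvature from the subproblem).} The second-order condition in Lemma \ref{lem:opt-cond-cubic-subp} gives
\[
\lambda_{\min}(H)\ge-\alo\|g_y\|-\tfrac{\alt}{2}\|\xi^+\|.
\]
Then $\|g_y\|\le\|\dfy(z)\|+\|\dfy(z)-g_y\|$. Substituting produces the terms $-\alo\|\dfy(z)\|-\alo\|\dfy(z)-g_y\|-\frac{\alt}{2}\|\xi^+\|$. Collecting everything gives exactly
\[
-[(3\beta L+1)\rho+\alt/2]\|\xi^+\|-(\beta\rho+\alo)\|\dfy(z)\|-\alo\|\dfy(z)-g_y\|-\|\Hb(z)-H\|.
\]

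None of these steps is genuinely hard. The only point needing care is that $\Ht$ must be evaluated at $z^+$ rather than $z$, which is what forces the extra $\beta\rho L\|\xi^+\|$ term. The bookkeeping of constants should be checked so that this term combines with the Lipschitz bound on $\Hb$ into $(3\beta L+1)\rho$.
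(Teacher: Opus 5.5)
Your proposal is correct and follows the paper's proof essentially verbatim. The paper writes $\nabla^2\hb(z^+) = H + [\Hb(z^+)-\Hb(z)+\Ht(z^+)] + [\Hb(z)-H]$, bounds $\lambda_{\min}(H)$ via the second-order condition in Lemma~\ref{lem:opt-cond-cubic-subp} together with the triangle inequality on $\|g_y\|$, and bounds the middle bracket using Lemma~\ref{lem:surrogate-hessian-properties} and $\|\dfy(z^+)\|\le\|\dfy(z)\|+L\|\xi^+\|$, which is exactly your Steps 1--3 with the same constant bookkeeping.
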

\begin{proof}
By the inequality in Lemma \ref{lem:opt-cond-cubic-subp} and the triangle inequality, it holds that
\[
\lambda_{\min}(H) \geq - \alo \|g_y\| - \frac{\alt}{2} \|\xi^+\| \geq  - \alo \|g_y - \dfy(z)\| - \alo \|\dfy(z)\| - \frac{\alt}{2} \|\xi^+\|.
\]
Furthermore, by Lemma \ref{lem:surrogate-hessian-properties}, we have
\[
\begin{aligned}
    \lambda_{\min}(\bar H(z^+) - \bar H(z) + \tilde H(z^+)) &\geq -(2\beta L + 1)\rho \|\xi^+\| - \beta\rho\|\dfy(z^+)\|\\
    &\geq -(3\beta L + 1)\rho \|\xi^+\| - \beta\rho\|\dfy(z)\|,
\end{aligned}
\]
where the second inequality is due to triangle inequality and $L$-smoothness of $f$.
Since \[\nabla^2 \hb(z^+)
    = H + \bar H(z^+) - \bar H(z) + \tilde H(z^+) + \Hb(z) - H,\]
combining the previous estimates completes the proof.
\end{proof}

In the last technical lemma of this section, we establish the relationship between $\|\dfy(z)\|$ and $\|\xi^+\|$, when $\beta > \muy^{-1}$.

\begin{lem}
    \label{lem:relationship-dfy-xik}
    Under the same assumption and parameter settings of Lemma \ref{lem:func-value-descent}, and if in addition, we require $\beta>\mu^{-1}$, then it holds that  
    \[\begin{aligned}
    \|\dfy(z)\| &\leq L\|\xi^+\| + (\beta \muy -1)^{-1} \Big[(\beta\rho + \alo)\|\dfy(z)\|   \|\xi^+\| + (4c+\alt/2) \|\xi^+\|^2 + \\
    &\hspace{0.5cm} \|\dhb(z) - g\| +  (2c)^{-1} \|\bar H(z) - H\|^2 + \alo^2(2c)^{-1} \|\dfy(z) - g_y\|^2\Big].
    \end{aligned}\]
\end{lem}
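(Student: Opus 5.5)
The plan is to combine two facts: a pointwise inequality $\|\dfy(z')\| \le (\beta\muy - 1)^{-1}\|\dhb(z')\|$, valid at every point $z'$ when $\beta > \muy^{-1}$, and the gradient bound of Lemma \ref{lem:upper-bound-of-h-gradient} at $z' = z^+$. The bracketed expression in the statement is exactly the right-hand side of Lemma \ref{lem:upper-bound-of-h-gradient}. So it suffices to show
\[
\|\dfy(z)\| \le L\|\xi^+\| + (\beta\muy-1)^{-1}\|\dhb(z^+)\|.
\]

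First, I would pass from $z$ to $z^+$. By the $L$-smoothness of $f$ in Assumption \ref{Assumption_1}, $\|\dfy(z) - \dfy(z^+)\| \le \|\df(z) - \df(z^+)\| \le L\|\xi^+\|$. Hence $\|\dfy(z)\| \le L\|\xi^+\| + \|\dfy(z^+)\|$, which produces the first term.

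Second, I would prove the pointwise inequality, and I expect this to be the only real step. I would extract the $y$-block of the gradient formula $\dhb(z') = [I_{n+m} + \beta\ddf(z')P]\df(z')$. Since $P\df(z') = (0, \dfy(z'))$, the $y$-component equals
\[
\big[I_m + \beta\ddfyy(z')\big]\dfy(z').
\]
By $\muy$-strong concavity we have $\ddfyy(z') \preceq -\muy I_m$, and $L$-smoothness gives $\ddfyy(z') \succeq -L I_m$. Hence the symmetric matrix $I_m + \beta\ddfyy(z')$ has all eigenvalues in $[1-\beta L,\, 1-\beta\muy]$. Because $\beta\muy > 1$, these eigenvalues are all negative, and each has absolute value at least $\beta\muy - 1 > 0$. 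Therefore
\[
\|\dhb(z')\| \ge \big\|[I_m + \beta\ddfyy(z')]\dfy(z')\big\| \ge (\beta\muy-1)\|\dfy(z')\|.
\]

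Finally, I would apply this inequality at $z' = z^+$ and then bound $\|\dhb(z^+)\|$ by Lemma \ref{lem:upper-bound-of-h-gradient}. Combining with the first step yields the claimed inequality. No further obstacle is expected beyond correctly reading off the $y$-block of $\dhb$.
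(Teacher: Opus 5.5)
Your proposal is correct and matches the paper's proof: you pass from $z$ to $z^+$ via $L$-smoothness, use $(\beta\mu-1)\|\nabla_y f(z^+)\|\le\|\nabla h_\beta(z^+)\|$, and then bound $\|\nabla h_\beta(z^+)\|$ by Lemma \ref{lem:upper-bound-of-h-gradient}. The one difference is the middle inequality: the paper cites \cite[Lemma 2.13]{ma2026linesearch}, whereas you derive it directly from the $y$-block $[I_m+\beta\nabla^2_{yy}f(z^+)]\nabla_y f(z^+)$ of $\nabla h_\beta(z^+)$ and $\nabla^2_{yy} f\preceq -\mu I_m$, which is valid and makes the argument self-contained (the bound $\nabla^2_{yy} f\succeq -LI_m$ is not needed).
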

\begin{proof}
    By the triangle inequality and $L$-smoothness of $f$, we have
    \[\begin{aligned}
    (\beta \muy -1) \|\dfy(z)\| 
    \leq& (\beta \muy -1) \|\dfy(z^+)\| + (\beta \muy -1) L\|\xi^+\| \\
    \leq& \|\dhb(z^+)\| + (\beta \muy -1)L\|\xi^+\|\\
    \leq& (\beta\rho + \alo)\|\dfy(z)\|   \|\xi^+\| + (4c+\alt/2) \|\xi^+\|^2 +  \|\dhb(z) - g\| + \\
    &\hspace{0.5cm} (2c)^{-1} \|\bar H(z) - H\|^2 + \alo^2(2c)^{-1} \|\dfy(z) - g_y\|^2 + L(\beta \muy -1)\|\xi^+\|,
    \end{aligned}\]
    where the second inequality follows from \cite[Lemma 2.13]{ma2026linesearch} and the last inequality is due to Lemma \ref{lem:upper-bound-of-h-gradient}.
    Dividing both sides by $\beta \muy -1 > 0$ completes the proof.
\end{proof}

\subsection{Global Convergence}
\label{subsec:global-conv}
Building on the previous preparation, we establish the global convergence of Algorithm \ref{alg:cubic-minimax}, and the corresponding sublinear rates.

\begin{theo}
    \label{thm:global-conv-cubic}
    Suppose Assumption \ref{Assumption_1} holds. Let $c := (3\beta L+1)\rho/6$ and $\Delta_0 := \hb(z_0) - \bar \Phi$. Then, the iterates $\{z_k\}_k$ generated by Algorithm \ref{alg:cubic-minimax}
    satisfy
    \[
    \hb(\zk) \to h^*,\quad \dhb(\zk) \to 0, \qaq \liminf_{k\to\infty}\lambda_{\min}(\nabla^2 \hb(\zk)) \geq 0,
    \]
    where $h^* \geq \bar \Phi$.
    Furthermore, for any $T \geq 1$, it holds that
    \[
    \min_{k = 1,\ldots, T} \max\bigg\{ \sqrt{\frac{\|\dhb(\zk)\|}{c}}, -\frac{\lambda_{\min}(\nabla^2 \hb(\zk))}{c} \bigg\} = O\bigg(\bigg[\frac{\Delta_0}{c T}\bigg]^{\frac13}\bigg).
    \]
\end{theo}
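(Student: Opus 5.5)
We specialize the general lemmas of Section~\ref{subsec:deter-CRN-method} to the deterministic case $g=\dhb(z_k)$, $g_y=\dfy(z_k)$, $H=\Hb(z_k)$, so that all error terms vanish. With $\alo=2\beta\rho$ and $\alt=2(3\beta L+1)\rho=12c$, Lemma~\ref{lem:func-value-descent} gives
\[
\hb(z_{k+1})-\hb(z_k)\le -\frac{\beta\rho}{2}\|\dfy(z_k)\|\|\xi_k\|^2 - c\|\xi_k\|^3 .
\]
Hence $\{\hb(z_k)\}$ is nonincreasing. By Proposition~\ref{prop:equivalence} (with $\beta>\muy^{-1}$), $\hb\ge\bar\Phi$. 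Therefore $\hb(z_k)\to h^*\ge\bar\Phi$. Telescoping then yields
\[
\sum_{k\ge0}\Big(c\|\xi_k\|^3+\tfrac{\beta\rho}{2}\|\dfy(z_k)\|\|\xi_k\|^2\Big)\le\Delta_0 .
\]
In particular $\|\xi_k\|\to0$, and $\min_{k<T}\|\xi_k\|^3\le \Delta_0/(cT)$.

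The key step is to control $\|\dfy(z_k)\|$ by $\|\xi_k\|$. Lemma~\ref{lem:relationship-dfy-xik} with zero errors gives
\[
(1-(\beta\muy-1)^{-1}3\beta\rho\|\xi_k\|)\,\|\dfy(z_k)\|\le L\|\xi_k\|+(\beta\muy-1)^{-1}10c\|\xi_k\|^2 .
\]
Whenever $\|\xi_k\|\le(\beta\muy-1)/(6\beta\rho)$, this yields $\|\dfy(z_k)\|\le 2L\|\xi_k\|+O(c\|\xi_k\|^2)$, i.e.\ $\|\dfy(z_k)\|=O(\|\xi_k\|)$. I expect this to be the main obstacle. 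The factor $1-\mathrm{const}\cdot\|\xi_k\|$ is only positive for small steps, and the constants must be absorbed into $c$ so the rate depends on $c$ alone. Two routes are available:
\begin{itemize}[leftmargin=0.5cm, itemsep=1pt, topsep=0pt, parsep=0pt]
\item \emph{Case split.} At indices where $\|\xi_k\|$ is large, the cubic descent $c\|\xi_k\|^3$ is already at least a constant, so there can be only $O(\Delta_0)$ such indices.
\item \emph{Direct bound.} Use the $\|\dfy\|\|\xi\|^2$ descent term to bound $\sum\|\dfy(z_k)\|\|\xi_k\|^2$ directly. This avoids dividing by $1-\mathrm{const}\cdot\|\xi_k\|$.
\end{itemize}

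Next, plug this into Lemmas~\ref{lem:upper-bound-of-h-gradient} and~\ref{lem:lower-bound-surrogate-hessian} (error terms zero). They give
\[
\|\dhb(z_{k+1})\|\le 3\beta\rho\|\dfy(z_k)\|\|\xi_k\|+10c\|\xi_k\|^2,
\qquad
\lambda_{\min}(\nabla^2\hb(z_{k+1}))\ge -12c\|\xi_k\|-3\beta\rho\|\dfy(z_k)\| .
\]
Since $\|\dfy(z_k)\|=O(\|\xi_k\|)$ and $\|\xi_k\|\to0$, both bounds give $\dhb(z_k)\to0$ and $\liminf_k\lambda_{\min}(\nabla^2\hb(z_k))\ge0$.

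For the rate, choose $k^*\in\{0,\dots,T-1\}$ minimizing $\|\xi_k\|$. Then $\|\xi_{k^*}\|\le(\Delta_0/(cT))^{1/3}$, and the index $k^*+1\in\{1,\dots,T\}$ satisfies
\[
\sqrt{\|\dhb(z_{k^*+1})\|/c}=O(\|\xi_{k^*}\|),
\qquad
-\lambda_{\min}(\nabla^2\hb(z_{k^*+1}))/c=O(\|\xi_{k^*}\|).
\]
This uses $\beta\rho\|\dfy(z_{k^*})\|\lesssim\beta\rho L\|\xi_{k^*}\|\lesssim c\|\xi_{k^*}\|$, which holds because $\beta L\rho\le 2c$. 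The smallness condition on $\|\xi_{k^*}\|$ needed for the key step holds once $T$ exceeds a constant multiple of $\Delta_0$. Smaller $T$ is covered by the $O(\cdot)$: there we fall back on the case-split route above, using the cubic descent bound to handle indices with large steps.
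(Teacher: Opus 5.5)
Your proof is correct, and it has the same skeleton as the paper's: specialize Lemmas \ref{lem:func-value-descent}--\ref{lem:relationship-dfy-xik} with $\alpha_1=2\beta\rho$ and $\alpha_2=12c$, telescope, pick an index $k^*$, and evaluate at $z_{k^*+1}$. The difference is how you control $\|\nabla_y f(z_k)\|$.

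\textbf{Your route.} You divide by $1-3\beta\rho\|\xi_k\|/(\beta\mu-1)$, which requires $\|\xi_k\|\le(\beta\mu-1)/(6\beta\rho)$. The paper uses this device only in the local analysis of Theorem \ref{thm:local-rates-KL}.

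\textbf{The paper's route.} Its global proof is your ``direct bound'' option and never divides. For the limits, it gets $\|\nabla_y f(z_k)\|\|\xi_k\|^2\to0$ from the summable descent term. It then passes this through the last two inequalities of \eqref{eq:global-conv-inter0}, from bottom to top. For the rate, it chooses $k^*$ to minimize the whole descent quantity $c\|\xi_k\|^3+\frac{\beta\rho}{2}\|\nabla_y f(z_k)\|\|\xi_k\|^2$, not $\|\xi_k\|$ alone. This also gives $\|\nabla_y f(z_{k^*})\|\|\xi_{k^*}\|^2\le 2\Delta_0/(\beta\rho T)$.

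\textbf{What each buys.} Write $u:=(\Delta_0/(cT))^{1/3}$ and $a:=\beta\rho/(\beta\mu-1)$. The paper's chain gives explicit bounds for every $T\ge1$: $\|\nabla h_\beta(z_{k^*+1})\|=O(cu^2+acu^3)$ and $-\lambda_{\min}(\nabla^2 h_\beta(z_{k^*+1}))=O(cu+acu^2+a^2cu^3)$. Your chain gives absolute constants once $au\le 1/6$; using $\beta L\rho\le 2c$, these are $32cu^2$ and $34cu$. Its final simplification to $O(cu^2)$ and $O(cu)$ also needs $au=O(1)$. That is exactly your threshold $T\gtrsim \Delta_0 a^3/c$, so both proofs deliver the stated rate in the same regime.

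\textbf{One correction.} Your ``case split'' fallback for small $T$ does no work. Counting the indices with $\|\xi_k\|>1/(6a)$ just reproduces the threshold $T\gtrsim\Delta_0a^3/c$ and gives no bound below it. Below the threshold, the $O(\cdot)$ statement holds only with constants that depend on the problem data, for instance through $a(\Delta_0/c)^{1/3}$. The paper's explicit three-term bound makes this concrete. You should say this directly rather than appeal to the case split.
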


\begin{proof}
    Since we work with exact gradients $\dhb(\zk)$, $\dfy(\zk)$ and Hessian component $\Hb(\zk)$ in Algorithm \ref{alg:cubic-minimax}, the error terms in Lemma \ref{lem:func-value-descent} -- \ref{lem:relationship-dfy-xik} vanish. Then, replacing the regularization parameters $\alo =2\beta\rho$ and $\alt =12c$ with their values in Lemma \ref{lem:func-value-descent} -- \ref{lem:relationship-dfy-xik} yields
    \begin{equation}
    \label{eq:global-conv-inter0}
    \begin{aligned}
        c \|\xik\|^3 + \frac{\beta \rho}{2} \|\dfy(\zk)\|\|\xik\|^2 &\leq \hb(\zk) - \hb(\zkp), \\
         \|\dhb(\zkp)\| &\leq 10c \|\xik\|^2 + 3\beta \rho\|\dfy(\zk)\|\|\xik\|,\\
         -\lambda_{\min}(\nabla^2 \hb(\zkp)) &\leq  12c \|\xik\| + 3\beta \rho \|\dfy(\zk)\|,\\
         \|\dfy(z_k)\| &\leq L\|\xik\| + (\beta \muy-1)^{-1} \big[10c \|\xik\|^2 + 3\beta\rho\|\dfy(\zk)\|\|\xik\|\big],\\
         \|\dfy(z_k)\|\|\xik\| &\leq L\|\xik\|^2 + (\beta \muy-1)^{-1} \big[10c \|\xik\|^3 + 3\beta\rho\|\dfy(\zk)\|\|\xik\|^2\big],
    \end{aligned}
    \end{equation}
    where we multiply both sides of the fourth inequality by $\|\xik\|$ to get the last one. Now, since $\beta>\muy^{-1}$, Proposition \ref{prop:equivalence} implies that $\hb(\zk) \geq \bar \Phi$ for all $k$. By the first inequality in \eqref{eq:global-conv-inter0}, $\{\hb(\zk)\}_k$ is nonincreasing and thus there exists $h^* \geq \bar \Phi$ such that $\hb(\zk)\to h^*$. Moreover, telescoping this inequality over $k = 0, 1,\ldots, T-1$ yields
    \begin{equation}
    \label{eq:global-conv-inter1}
    c {\sum}_{k=0}^{T-1} \|\xik\|^3 + \frac{\beta \rho}{2} {\sum}_{k=0}^{T-1} \|\dfy(\zk)\|\|\xik\|^2 \leq \hb(z_0) - \bar \Phi =: \Delta_0.
    \end{equation}
    Since the RHS does not depend on $T$, taking $T \to \infty$ implies $\xik \to 0$ and $\|\dfy(\zk)\|\|\xik\|^2 \to 0$. 
    Combining these two convergence results with \eqref{eq:global-conv-inter0} (from bottom to top) yields 
    $\dfy(\zk) \to 0$, $\dhb(\zk) \to 0$, and $\liminf_{k\to\infty} \lambda_{\min} (\nabla^2 \hb(z_k)) \geq 0$.
    
    To establish the finite-step convergence rate result, observe that \eqref{eq:global-conv-inter1} implies
    \[
    \min_{k = 0,1,\ldots,T-1} \Big\{c \|\xik\|^3 + \frac{\beta \rho}{2} \|\dfy(\zk)\|\|\xik\|^2\Big\} \leq \frac{\Delta_0}{T}.
    \]
    Let the minimum on the LHS be attained at iteration $k^* \leq T - 1$. Then we obtain that 
    $\|\xi_{k^*}\| \leq ({\Delta_0}/{cT})^{1/3}$ and $(\beta \rho/2) \|\dfy(z_{k^*})\|\|\xi_{k^*}\|^2 \leq {\Delta_0}/{T}$. Then, repeatedly plugging them into \eqref{eq:global-conv-inter0} from bottom to top proves the results.  In detail, we first plug these bounds into the last inequality of \eqref{eq:global-conv-inter0} to yield 
    $$\|\dfy(z_{k^*})\|\|\xi_{k^*}\|  = O\bigg(L \bigg[\frac{\Delta_0}{cT}\bigg]^{\frac23}+ \frac{1}{\beta\muy-1}\cdot\frac{\Delta_0}T\bigg).$$
    Next, substituting this inequality into the second last inequality of \eqref{eq:global-conv-inter0} bounds $\|\dfy(z_{k^*})\|$ by
    $$\|\dfy(z_{k^*})\| = O\bigg(L\bigg[\frac{\Delta_0}{cT}\bigg]^{\frac13} + \frac{c}{\beta\muy-1}\cdot\bigg[\frac{\Delta_0}{cT}\bigg]^{\frac23} + \frac{\beta\rho}{(\beta\muy-1)^2}\cdot\frac{\Delta_0}{T}\bigg).$$
    Finally, substituting the above upper bounds on $\|\xi_{k^*}\|$, $\|\dfy(z_{k^*})\|\|\xi_{k^*}\|^2$, and $\|\dfy(z_{k^*})\|$ to the second and third inequalities of \eqref{eq:global-conv-inter0} provides the final results:
    \[
    \begin{aligned} 
         \|\dhb(z_{k^*+1})\| &= O\bigg(c \bigg[\frac{\Delta_0}{cT}\bigg]^{\frac23} + \frac{\beta\rho}{\beta\muy-1}\frac{\Delta_0}{T}\bigg)=O\bigg(c \bigg[\frac{\Delta_0}{cT}\bigg]^{\frac23}\bigg),\\
         -\lambda_{\min}(\nabla^2 \hb(z_{k^*+1})) &\leq  O\bigg(c \bigg[\frac{\Delta_0}{cT}\bigg]^{\frac13} + \frac{c\beta\rho}{\beta\muy-1}\bigg[\frac{\Delta_0}{cT}\bigg]^{\frac23} + \frac{(\beta\rho)^2}{(\beta\muy-1)^2}\frac{\Delta_0}{T}\bigg)=O\bigg(c \bigg[\frac{\Delta_0}{cT}\bigg]^{\frac13}\bigg),
    \end{aligned}
    \]
    where we repeatedly utilize the fact that $\beta L \rho/2 \leq c$. This completes the proof.
\end{proof}

By Proposition \ref{prop:equivalence} and Theorem \ref{thm:global-conv-cubic}, every accumulation point of Algorithm \ref{alg:cubic-minimax} satisfies the SONC of the minimax problem \eqref{Prob_Ori}. Furthermore, denoting $\kappa = L/\mu$ as the condition number of the inner maximization problem, this global convergence theorem immediately yields the following iteration complexity result.

\begin{coro}
    \label{coro:complexity-deter}
    Suppose Assumption \ref{Assumption_1} holds and let $\Delta_0 = \hb(z_0) - \bar \Phi$. Then, Algorithm \ref{alg:cubic-minimax} with $\beta=2\muy^{-1}$
    finds an $(\varepsilon, \sqrt{\kappa\rho\varepsilon})$-SOSP of \eqref{Prob_Min} with at most
    $O(\sqrt{\kappa \rho}\Delta_0\varepsilon^{-3/2})$
    iterations.
\end{coro}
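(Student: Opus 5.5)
The plan is to specialize Theorem \ref{thm:global-conv-cubic} to $\beta = 2\muy^{-1}$ and then solve for $T$. With this choice $\beta\muy - 1 = 1$, so every factor $(\beta\muy-1)^{-1}$ in the proof of Theorem \ref{thm:global-conv-cubic} is an absolute constant. The constant becomes $c = (3\beta L+1)\rho/6 = (6\kappa+1)\rho/6$. Since $\kappa = L/\muy \geq 1$ (strong concavity in $y$ together with $L$-smoothness forces $\muy \le L$), we get $c = \Theta(\kappa\rho)$.

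Theorem \ref{thm:global-conv-cubic} guarantees an index $k\in\{1,\dots,T\}$ at which
\[
\|\dhb(z_k)\| \le C_1\, c\,\Big[\tfrac{\Delta_0}{cT}\Big]^{2/3}
\qaq
-\lambda_{\min}(\nabla^2\hb(z_k)) \le C_2\, c\,\Big[\tfrac{\Delta_0}{cT}\Big]^{1/3},
\]
with absolute constants $C_1, C_2$. To make the first quantity at most $\varepsilon$, it suffices to have $T \ge C\,\Delta_0 c^{1/2}\varepsilon^{-3/2}$. Indeed, $c\,(\Delta_0/(cT))^{2/3} = c^{1/3}\Delta_0^{2/3}T^{-2/3}$, and setting this equal to $\varepsilon$ gives $T = \Delta_0 c^{1/2}\varepsilon^{-3/2}$.

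For the same $T$, the curvature bound becomes $c^{2/3}\Delta_0^{1/3}T^{-1/3} = c^{2/3}\cdot c^{-1/6}\varepsilon^{1/2} = \sqrt{c\,\varepsilon}$, up to absolute constants. Substituting $c = \Theta(\kappa\rho)$ gives $\|\dhb\|\le\varepsilon$ and $\lambda_{\min}(\nabla^2\hb)\ge -O(\sqrt{\kappa\rho\varepsilon})$. The iteration count is $T = O(\sqrt{\kappa\rho}\,\Delta_0\,\varepsilon^{-3/2})$.

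The main step to check is that the constants hidden in the $O(\cdot)$ of Theorem \ref{thm:global-conv-cubic} are truly absolute once $\beta = 2\muy^{-1}$. The theorem absorbed the lower-order terms using $\beta L\rho/2 \le c$, and those terms carry factors $\beta\rho/(\beta\muy-1)$ that could in principle depend on $\kappa$. With $\beta\muy - 1 = 1$, each such term is bounded by the corresponding leading term of order $c$, because $\beta\rho \le c/L$ and $L$ pairs with the $\|\xi\|$ powers. I would re-trace these absorptions explicitly, bounding every correction term by a constant multiple of $c[\Delta_0/(cT)]^{2/3}$ or $c[\Delta_0/(cT)]^{1/3}$.

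Finally, the $\sqrt{\kappa\rho\varepsilon}$ tolerance in the statement is meant up to a constant factor. If an exact constant is required, I would enlarge $T$ by a constant multiple, which does not change the order of the bound.
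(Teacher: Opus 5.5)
Your reduction is the same as the paper's, which gives no separate proof and simply specializes Theorem~\ref{thm:global-conv-cubic} with $\beta\mu-1=1$ and $c=\Theta(\kappa\rho)$. Your arithmetic solving for $T$ is also correct. The problem is the step you single out as the main one to check: the claim that the hidden constants become absolute once $\beta\mu-1=1$ is false.

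Write $\tau:=(\Delta_0/(cT))^{1/3}$, so that $\Delta_0/T=c\tau^3$.
\begin{itemize}
\item In the gradient bound, the correction $\frac{\beta\rho}{\beta\mu-1}\frac{\Delta_0}{T}=\beta\rho\,c\tau^3$ exceeds the leading term $c\tau^2$ by the dimensionless factor $\beta\rho\tau$.
\item In the curvature bound, the corrections $c\beta\rho\tau^2$ and $(\beta\rho)^2c\tau^3$ exceed $c\tau$ by the factors $\beta\rho\tau$ and $(\beta\rho\tau)^2$.
\end{itemize}
At your choice of $T$ you have $\tau\asymp\sqrt{\varepsilon/c}$, hence $\beta\rho\tau\asymp\sqrt{\rho\varepsilon/(\mu L)}$. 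This is not bounded by an absolute constant. The inequality $\beta\rho\le 2c/L$ does not help. (Note it is $2c/L$; $\beta\rho\le c/L$ actually fails when $\beta L>1/3$.) It only rewrites the factor as $O(c\tau/L)=O(\sqrt{c\varepsilon}/L)$, which is the same unbounded quantity. Pairing $L$ with one power of $\|\xi\|$ consumes the $L$ but leaves $c\tau/L$. So the theorem's $O(\cdot)$ is really asymptotic in $T$: it holds with absolute constants only once $\beta\rho\tau\lesssim 1$.

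To close the argument, choose one of two fixes.
\begin{itemize}
\item Restrict to the small-accuracy regime $\varepsilon\le c_0\,\mu L/\rho$ for an absolute $c_0$. This regime is implicit in the corollary and is made explicit (``sufficiently small $\varepsilon$'') in Theorem~\ref{thm:convergence-rates-stoch}. Then $\beta\rho\tau\le 1$ at your $T$, and every correction term is absorbed with absolute constants.
\item For arbitrary $\varepsilon$, take $\tau=\min\{\sqrt{\varepsilon/(Cc)},\,1/(\beta\rho)\}$ with $C$ an absolute constant. Using $c\ge L\rho/\mu$, this gives $T=O(\sqrt{\kappa\rho}\,\Delta_0\varepsilon^{-3/2}+\rho^2\Delta_0/(\mu^2 L))$. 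The extra $\varepsilon$-independent term is dominated by the first exactly when $\varepsilon\lesssim\mu L/\rho$.
\end{itemize}
Either way, state the regime explicitly rather than asserting that the constants are absolute.
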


A remark about Theorem \ref{thm:global-conv-cubic} and Corollary \ref{coro:complexity-deter} is that, aside from the dependency on $\varepsilon$, our iteration complexity is not directly comparable with that in \citep{luo2022finding} (or \citep{chen2026homogeneous}), which utilizes a double-loop CRN (or homogeneous descent) method to solve \eqref{Prob_value_fcn}, which is a different minimization reformulation of the minimax problem \eqref{Prob_Ori}. Translating iteration complexity results between these two reformulations maintains the dependency on $\varepsilon$, but  typically incurs a loss regarding the dependency on the problem constants $\rho$ and $\kappa$. Therefore, it will be unfair to claim the $O(\sqrt{\kappa})$ dependence in our result to be superior to the $O(\kappa^{3/2})$ dependence in double-loop methods.

Nevertheless, for practical problems, when the iterates approach a nondegenerate local solution, a single-loop second-order method often better exploits the local geometry of the problem. Especially when entering the local superlinear convergence region, its local behavior is essentially independent of the problem conditioning. In contrast, double-loop methods remain affected by  the problem's conditioning due to the inner-loop bottleneck,  even when the local geometry is already favorable to second-order methods. This motivates our local convergence analysis in the next subsection. In particular, we also empirically evaluate this phenomenon in Section \ref{sec:numerical experiments}, where the curve of running time versus condition number for our method grows much slower than for double-loop methods. 

\subsection{Fast Local Convergence under Regularity Conditions}
\label{subsec:iterate-conv-KL}
In this subsection, we establish iterate convergence (full sequence convergence) and fast local convergence rates of the ACQRN method under favorable local geometry, including the KL property, and the standard nondegeneracy assumption that the sequence enters a proper neighborhood of local solution with positive definite Hessian.     

\begin{assumpt}
    \label{ass:value-function-loj}
    The function $h_\beta$ with $\beta>\mu^{-1}$ satisfies the KL property at each point in  $\crit(\hb) := \{(x,y) \in \Rn\times\Rm: \dhb(x,y) = 0, \nabla^2 \hb(x,y) \succeq 0\}$  with exponent in $[1/2, 1)$.
\end{assumpt}

By Proposition \ref{prop:equivalence}, when $\beta > \muy^{-1}$, Assumption \ref{ass:value-function-loj} is equivalent to assuming the value function $\Phi$ to satisfy the KL property on $\crit(\Phi):=\{x\in\Rn:  \nabla\Phi(x) = 0, \nabla^2 \Phi(x) \succeq 0\}$, which is a standard assumption in value minimization based literature. This allows us to establish the iterate convergence and fast local convergence.

\begin{theo}
    \label{thm:local-rates-KL}
    Suppose Assumptions \ref{Assumption_1} and \ref{ass:value-function-loj} hold. Moreover, suppose the iterates $\{\zk\}_k$ generated by Algorithm \ref{alg:cubic-minimax} with $\beta > \muy^{-1}$, $\alo = 2\beta\rho$, and $\alt = 2(3\beta L+1)\rho$ have at least one accumulation point $z^* = (x^*, y^*)$. Let $\theta$ be the KL exponent of $\hb$ at $z^*$. The following statements are true. 
    \begin{enumerate}[leftmargin=0.5cm, itemsep=1pt, topsep=0pt, parsep=0pt]
        \item If $z^*$ satisfies the SOSC of \eqref{Prob_Ori}, 
        $\|\zk - z^*\|$ converges (locally) Q-quadratically to 0.
        \item If $\theta \in [1/2, 2/3)$, 
        $\|\zk - z^*\|$ converges (locally) Q-superlinearly at a rate of ${2}/{(3\theta)}$ to 0.
        \item If $\theta = 2/3$, 
        $\|\zk - z^*\|$ converges (locally) Q-linearly to 0.
        \item If $\theta \in (2/3,1)$, then
        $
        \|\zk - z^*\| = O(k^{-2(1-\theta)/(3\theta - 2)}).
        $
    \end{enumerate}
\end{theo}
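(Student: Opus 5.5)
The plan is to reduce Algorithm \ref{alg:cubic-minimax}, near $z^*$, to the two inequalities that drive every KL analysis of cubic Newton: a sufficient decrease proportional to $\|\xik\|^3$ and a relative error bound $\|\dhb(\zkp)\|\le C\|\xik\|^2$. Both should come from \eqref{eq:global-conv-inter0}. The first line of \eqref{eq:global-conv-inter0} gives $\hb(\zk)-\hb(\zkp)\ge c\|\xik\|^3$ directly. For the second, I will use Theorem \ref{thm:global-conv-cubic}, which gives $\xik\to 0$. Hence for large $k$ we have $3\beta\rho\|\xik\|/(\beta\muy-1)\le 1/2$, and the fourth line of \eqref{eq:global-conv-inter0} can be absorbed into $\|\dfy(\zk)\|\le 2L\|\xik\|+O(\|\xik\|^2)$. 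Substituting this into the second line yields $\|\dhb(\zkp)\|\le C_1\|\xik\|^2$ for all large $k$. Theorem \ref{thm:global-conv-cubic} also gives $\hb(\zk)\downarrow h^*$ and $z^*\in\crit(\hb)$, since the gradient vanishes and the Hessian has nonnegative liminf. Continuity gives $\hb(z^*)=h^*$, so Assumption \ref{ass:value-function-loj} provides a neighborhood $U$, a constant $C_F$ and the exponent $\theta$.

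\textbf{Iterate convergence.} Set $r_k:=\hb(\zk)-h^*$ and $\varphi(s)=s^{1-\theta}/(1-\theta)$. While $\zk\in U$, concavity of $\varphi$ and the KL inequality give
\[
\varphi(r_k)-\varphi(r_{k+1})\ \ge\ \frac{r_k-r_{k+1}}{r_k^{\theta}}\ \ge\ \frac{C_F\, c\,\|\xik\|^3}{\|\dhb(\zk)\|}\ \ge\ \frac{C_F\, c\,\|\xik\|^3}{C_1\|\xi_{k-1}\|^2}.
\]
Writing $D_k$ for a constant multiple of the left-hand side, this reads $\|\xik\|\le(\|\xi_{k-1}\|^2D_k)^{1/3}\le\tfrac23\|\xi_{k-1}\|+\tfrac13 D_k$. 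Summing these bounds shows that $\sum_k\|\xik\|$ is finite once the iterates stay in $U$. The standard Attouch--Bolte induction does the rest: choose $k_0$ with $z_{k_0}$ close to $z^*$ (an accumulation point) and $\varphi(r_{k_0})$, $\|\xi_{k_0}\|$ small. The induction then shows that the whole tail remains in $U$, the sequence is Cauchy, and therefore $\zk\to z^*$.

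\textbf{Rates.} Combining the descent, KL and error-bound inequalities gives
\[
\|\dhb(\zkp)\|\le C_1\|\xik\|^2\le C_1\big(r_k/c\big)^{2/3}\le C_2\|\dhb(\zk)\|^{2/(3\theta)}.
\]
It also gives $r_{k+1}^{3\theta/2}\le C_3(r_k-r_{k+1})$. These recursions produce the four regimes, because the sign of $2/(3\theta)-1$ decides between superlinear, linear and sublinear behaviour. For $\theta\in(2/3,1)$, the recursion $r_{k+1}^{3\theta/2}\le C_3(r_k-r_{k+1})$ gives $r_k=O(k^{-2/(3\theta-2)})$. The tail bound $\|\zk-z^*\|\le\sum_{j\ge k}\|\xi_j\|\lesssim\varphi(r_k)+\|\xi_{k-1}\|$, obtained from the summation above, then gives $O(k^{-2(1-\theta)/(3\theta-2)})$.

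\textbf{SOSC case.} By Proposition \ref{prop:equivalence}, $\nabla^2\hb(z^*)\succ0$. Since $\dfy(z^*)=0$ we have $\Ht(z^*)=0$, so $\Hb(z^*)=\nabla^2\hb(z^*)\succ 0$. Near $z^*$ the regularized matrix in \eqref{eq:cubic-subproblem} is then uniformly positive definite. The optimality condition in Lemma \ref{lem:opt-cond-cubic-subp} therefore gives $\|\xik\|\le C\|\dhb(\zk)\|$, and local strong convexity of $\hb$ gives $\|\dhb(z)\|\asymp\|z-z^*\|$. Hence
\[
\|\zkp-z^*\|\lesssim\|\dhb(\zkp)\|\le C_1\|\xik\|^2\lesssim\|\zk-z^*\|^2,
\]
which is Q-quadratic convergence.

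\textbf{Main obstacle.} I expect the hardest step to be upgrading the rates for $\theta\in[1/2,2/3]$ from the gradient (or function-value) sequence to Q-rates for $\|\zk-z^*\|$. This needs two-sided comparability of $\|\zk-z^*\|$ with $\|\xi_{k-1}\|$ or with $\varphi(r_k)$. My first attempt will follow the template of KL-based cubic-Newton analyses. From the tail bound, $\|\zk-z^*\|\lesssim\|\xi_{k-1}\|+\varphi(r_k)$. Since $\varphi(r_k)\lesssim r_k^{1-\theta}$ and $r_k\lesssim\|\xi_{k-1}\|^{2/\theta}$ (by KL and the error bound), the term $\varphi(r_k)$ is of smaller order than $\|\xi_{k-1}\|$ for $\theta<2/3$. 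For the reverse inequality I will use the triangle inequality: $\|\xi_{k-1}\|\le\|\zk-z^*\|+\|z_{k-1}-z^*\|$, with the second term dominated by the first in the superlinear regime. Chaining the recursion $\|\xik\|\lesssim\|\xi_{k-1}\|^{2/(3\theta)}$ then transfers the order $2/(3\theta)$ to $\|\zk-z^*\|$. In the linear case $\theta=2/3$, the same bounds give a contraction factor strictly below one.
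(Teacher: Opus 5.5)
Your reduction is the same as the paper's. Absorbing the fourth line of \eqref{eq:global-conv-inter0} gives $\|\nabla_y f(z_k)\|=O(\|\xi_k\|)$. Hence $c\|\xi_k\|^3\le h_\beta(z_k)-h_\beta(z_{k+1})$ and $\|\nabla h_\beta(z_{k+1})\|\le C_1\|\xi_k\|^2$. The paper stops there and invokes the framework of \citep{li2024generalized} for all four items. Your self-contained replacement works for finite length and convergence of $z_k$, for the sublinear case $\theta\in(2/3,1)$, and for the SOSC case.

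The gap is in turning these bounds into Q-rates for $\|z_k-z^*\|$ when $\theta\le 2/3$. You pair $\|z_k-z^*\|$ with the \emph{previous} step. You then claim $\|\xi_{k-1}\|\lesssim\|z_k-z^*\|$, arguing that in $\|\xi_{k-1}\|\le\|z_k-z^*\|+\|z_{k-1}-z^*\|$ the second term is dominated by the first. This is backwards. Superlinear convergence means $\|z_k-z^*\|=o(\|z_{k-1}-z^*\|)$, so $\|\xi_{k-1}\|\approx\|z_{k-1}-z^*\|\gg\|z_k-z^*\|$. Appealing to the superlinear regime at this point is also circular. Your chain therefore only yields the two-step bound $\|z_{k+1}-z^*\|\lesssim\|z_{k-1}-z^*\|^{2/(3\theta)}$, which is not a Q-rate.

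For $\theta<2/3$ the fix is to compare $\|z_k-z^*\|$ with the \emph{current} step. Set $q:=2/(3\theta)>1$ and use $\|\xi_{j+1}\|\le C\|\xi_j\|^q$, which you did derive. Eventually $\|\xi_{j+1}\|\le\frac14\|\xi_j\|$, so
\[
\|z_k-z^*\|\le\sum_{j\ge k}\|\xi_j\|\le\tfrac43\|\xi_k\|
\quad\text{and}\quad
\|z_k-z^*\|\ge\|\xi_k\|-\|z_{k+1}-z^*\|\ge\|\xi_k\|-\tfrac43\|\xi_{k+1}\|\ge\tfrac23\|\xi_k\|.
\]
Hence $\|z_{k+1}-z^*\|\le\frac43 C(\frac32)^q\|z_k-z^*\|^q$.

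For $\theta=2/3$ this does not work: $q=1$, and the constant in $\|\xi_k\|\le C\|\xi_{k-1}\|$ need not be below one. So the claim that the same bounds give a contraction factor below one is unjustified. Your inequalities do give $r_{k+1}\le\frac{C_3}{1+C_3}r_k$ and $\|z_k-z^*\|\lesssim\|\xi_{k-1}\|+\varphi(r_k)\lesssim r_{k-1}^{1/3}$, which is R-linear convergence of the iterates. The matching lower bound needed for a Q-linear statement is not available. KL plus the local Lipschitz continuity of $\nabla h_\beta$ only give $\|z_k-z^*\|\gtrsim r_k^{2/3}$. Getting Q-linear convergence of $\|z_k-z^*\|$ therefore needs an argument you have not supplied. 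The paper delegates exactly this to Theorem 5.2 and Remark 5.3 of \citep{li2024generalized}. You should either invoke that result or state the weaker conclusion your argument actually proves.
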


\begin{proof}
From the fourth inequality in \eqref{eq:global-conv-inter0}, 
we have
\[
\|\dfy(\zk)\| \leq L\|\xik\| + (\beta\muy-1)^{-1}\big[10c\|\xik\|^2 + 3\beta\rho\|\dfy(\zk)\|\|\xik\|\big].
\]
Since $\|\xik\| \to 0$, for all sufficiently large $k$ the term $3\beta\rho(\beta\muy-1)^{-1}\|\xik\| \leq 1/2$, and rearranging gives $\|\dfy(\zk)\| \leq 2L\|\xik\| + O(\|\xik\|^2) = O(\|\xik\|)$.

As a result, for all sufficiently large $k$, by \eqref{eq:global-conv-inter0}, there exist $b_1, b_2 > 0$ such that
\begin{equation}
    \label{eq:local-conv-inter1}
    b_1\|\xik\|^3 \leq \hb(\zk) - \hb(\zkp) \qaq
    \|\dhb(\zkp)\| \leq b_2 \|\xik\|^2,
\end{equation}
which, together with continuity of $\hb$, verifies \citep[Conditions (H1'), (H2), and (H3)]{li2024generalized}. 
For the first bullet point, since $z^*$ satisfies the SOSC of \eqref{Prob_Ori}, by Proposition \ref{prop:equivalence}, it also satisfies the SOSC of \eqref{Prob_Min}. Then, the local Q-quadratic convergence of $z_k$ to $z^*$ follows from \citep[Theorem 4.4 and Corollary 6.5]{li2024generalized}. Furthermore, under the KL property, \citep[Theorem 5.2 and Remark 5.3]{li2024generalized} imply that $\zk \to z^*$ with the corresponding convergence rates quoted in this theorem. 
\end{proof}



\section{A stochastic ACQRN method for minimax problems}
\label{sec:stochastic-cubic}
In this section, we investigate the following stochastic setting of the minimax problem
\begin{equation}
    \tag{MM-Sto}
    \label{prob_stoch}
    \min_{x\in \Rn}\max_{y\in \Rm} f(x, y) = \Exp_{\zeta\sim Z}[F(x, y; \zeta)],
\end{equation}
and present a single-loop stochastic ACQRN (S-ACQRN) method based on the regularized minimization reformulation \eqref{Prob_Min}.
In addition to Assumption \ref{Assumption_1}, we require the following assumption on $F$. It is worth noting that our Assumption \ref{ass:stoch} is significantly weaker than those in \citep{chen2023a}, which require the boundedness and smoothness of $\dF$ and $\ddF$ to hold almost surely. In particular, we would like to emphasize that the global boundedness of $\dF$ is not compatible with the strong concavity assumption in terms of the variable $y$. In this section, we follow our convention and denote $z := (x,y)$.
\begin{assumpt}
    \label{ass:stoch} 
    For all $z \in \bb{R}^{n+m}$, when $\zeta\sim Z$, 
	the stochastic gradient $\dF(z;\zeta)$ and Hessian $\ddF(z;\zeta)$ are unbiased estimators of $\df(z)$ and $\ddf(z)$, respectively. That is,
    \[\Exp\big[\dF(z;\zeta)\big] = \df(z) \qaq \Exp\big[\ddF(z;\zeta)\big] = \ddf(z).\]
    Furthermore, for any $z, z_1, z_2 \in \bb{R}^{n+m}$, it holds that
    \[\begin{aligned}
    \Exp\big[\|\dF(z;\zeta)-\df(z)\|^3\big] &\leq \sigo^3, &\quad
    \Exp\big[\|\dF(z_1; \zeta) - \dF(z_2; \zeta)\|^3\big] &\leq L^3 \|z_1 - z_2\|^3,\\
    \Exp\big[\|\ddF(z;\zeta)\|_F^3\big] &\leq \sigt^3, &\quad \Exp\big[\|\ddF(z_1; \zeta) - \ddF(z_2; \zeta)\|_F^3\big] &\leq \rho^3 \|z_1 - z_2\|^3.
    \end{aligned}\]
\end{assumpt}

Note that in Assumption \ref{Assumption_1} and Assumption \ref{ass:stoch}, we use the same symbols $L$ and $\rho$ for the Lipschitz constants and their third-moment counterparts. This is mainly for notational simplicity. One can indeed assign different notations to distinguish these quantities. We now discuss how to estimate $\dhb(z)$, $\Hb(z)$, and $\dfy(z)$ required in the subproblem \eqref{eq:cubic-subproblem} via independent sampling. 
Let $\mathcal{B}_1 = \{\zeta_{i,1}\}_{i=1}^B$ and $\mathcal{B}_2 = \{\zeta_{i,2}\}_{i=1}^B$ be two independent random batches of size $B$. We define the  estimator for $\dhb(z)$ as:
\[
Q(z; \cB_1, \cB_2) := \frac{1}{B} {\sum}_{i=1}^B \Big[I_{m+n} + \beta\ddF(z; \zeta_{i,1}) P\Big] \dF(z; \zeta_{i,2}), 
\]
the estimator for $\Hb(z)$ as:
\[
W(z; \cB_1, \cB_2) := \frac{1}{B} {\sum}_{i=1}^B \Big[I_{m+n} + \beta\ddF(z; \zeta_{i,1}) P\Big] \ddF(z; \zeta_{i,2}),
\]
and the partial gradient estimator for $\dfy(z)$ as:
\[
G_y(z; \cB_1) := \frac{1}{B} {\sum}_{i=1}^B \nabla_y F(z; \zeta_{i,1}).
\]
It is straightforward to verify that the estimators $Q(z; \cB_1, \cB_2)$, $W(z; \cB_1, \cB_2)$ and $G_y(z; \cB_1)$ are unbiased estimators of $\dhb(z)$, $\Hb(z)$, and $\dfy(z)$, respectively. Since $W$ may not be symmetric, whenever it is used as the Hessian approximation in the subproblem, we replace it by $\operatorname{Sym}(W):=(W+W^\top)/2$ and still denote the symmetrized estimator by $W$. This preserves unbiasedness because $\Hb(z)$ is symmetric and $\Exp[W]=\Hb(z)$. It also preserves the deviation bounds because symmetrization does not increase the spectral or Frobenius norm error. Notably, utilizing modern automatic differentiation techniques, the computational cost for evaluating the estimator $Q(z; \cB_1, \cB_2)$ (or $W(z; \cB_1, \cB_2)$) is comparable to computing the stochastic gradient (or Hessian) batches of size $B$. 

Given the preceding discussions, we are ready to formally introduce our S-ACQRN method (Algorithm \ref{alg:stoch-cubic-minimax}). The estimator updates \eqref{eq:stochastic-estimators} for $Q$ and $W$ utilize a recursive variance reduction mechanism \citep{nguyen2017sarah}. Moreover, the design of our sampling scheme  \eqref{eq:batch-size} is motivated by \citep{zhou2020stochastic}. This approach adaptively scales the inner-epoch batch sizes in \eqref{eq:batch-size} with respect to $\|\bxikm\|^2$. Thus, when $\|\bxikm\|^2$ is small, our algorithm draws significantly fewer samples than the fixed-batch approaches. However, such a dynamic batch-size rule also creates additional difficulties in the analysis. In fact, the analysis in \citep{zhou2020stochastic} suffers from a structural conflict between a conditional high-probability truncation argument and the Azuma-style martingale-difference argument, making their final complexity claim not fully justified. To resolve this issue, we propose a new analysis strategy that relies on a different probabilistic tool: the Burkholder-Davis-Gundy (BDG) inequality \citep{BurDavGun72,burkholder1988sharp}.  


\begin{algorithm}[ht]
\caption{S-ACQRN method for \eqref{prob_stoch}.}
\label{alg:stoch-cubic-minimax}
\begin{algorithmic}[1]
\State{\textbf{Initialization:} Initial point $\boldsymbol z_0 = (\boldsymbol x_0, \boldsymbol y_0)$, tolerance $\varepsilon > 0$, function value gap upper bound $\Delta_0 >0$, total iteration count $T \in \bb{Z}_{++}$, cycle length $S^g, S^h \in \bb{Z}_{++}$, regularization parameters $\beta > \muy^{-1}$, $\alo = 2\beta\rho$, $\alt = 12c$, where $c=(3\beta L+1)\rho/6$, and parameters for the batch sizes $R_\Delta=2\sqrt{\muy\Delta_0/(\beta\muy-1)}$, $c_1=(1+\beta\sigt)^2\sigo^2+\beta^2\sigt^2R_\Delta^2$, $c_2=L^2(1+\beta\sigt)^2+\beta^2\rho^2(\sigo^2+R_\Delta^2)$, and $c_3=1+\beta(L+\sigt)$.}
\For{$k=0,1,2,\ldots,T-1$}
\State{Sample independent batches $\cBgko, \cBgkt, \cBhko, \cBhkt, \cBgky \sim Z$ with sizes satisfying
\vspace{-2mm}
\begin{equation}
\label{eq:batch-size}
\begin{aligned}
    |\cBgko|=|\cBgkt| &\geq \begin{cases}
        64c_1\varepsilon^{-2}, & \text{if } \operatorname{mod}(k, S^g) = 0;\\
        \lfloor64c_2S^g \|\bxikm\|^2 \varepsilon^{-2}\rfloor + 1, &\text{otherwise};
    \end{cases}\\  
    |\cBhko|= |\cBhkt| &\geq \begin{cases}
    648 \sigt^2 c_3^2 (c\varepsilon)^{-1}, & \text{if } \operatorname{mod}(k, S^h) = 0;\\
    \lfloor 648 \rho^2 c_3^2 S^h \|\bxikm\|^2 (c\varepsilon)^{-1}\rfloor + 1, &\text{otherwise};
    \end{cases}\\
    |\cBgky| &\geq 36 (\sigo\beta\rho)^2 (c\varepsilon)^{-1},
\end{aligned}
\end{equation}
where the constants are set in the initialization.}
\State{Compute
\begin{equation}
\label{eq:stochastic-estimators}
\begin{aligned}
    \bgk &\gets 
    \begin{cases} 
        Q(\bzk; \cBgko, \cBgkt), & \text{if } \operatorname{mod}(k, S^g) = 0; \\
        Q(\bzk; \cBgko, \cBgkt) -  Q(\bzkm; \cBgko, \cBgkt)  + \bgkm, & \text{otherwise};\\
    \end{cases}\\
    \bHbk &\gets 
    \begin{cases} 
        W(\bzk; \cBhko, \cBhkt), & \text{if } \operatorname{mod}(k, S^h) = 0; \\
        W(\bzk; \cBhko, \cBhkt) -   W(\bzkm; \cBhko, \cBhkt) + \bHbkm, & \text{otherwise};
    \end{cases}\\
    \bgyk &\gets G_y(\bzk; \cBgky).
\end{aligned}
\end{equation}}
\State{Solve the cubic regularized subproblem
\begin{equation}
    \label{eq:stoch-cubic-subproblem}
    \bxik \in \argmin_{\xi \in \bb{R}^{n+m}} \bgk^\top\xi + \frac{1}{2} \xi^\top [\bHbk + \alo \|\bgyk\| I_{n+m}] \xi + \frac{\alt}{6} \|\xi\|^3.
\end{equation}}
\vspace{-3mm}
\State{$\bzkp \gets \bzk + \bxik$.}
\EndFor
\State{\textbf{Output}: $\boldsymbol{z}_{\tilde k}$ with $\tilde k$ chosen uniformly at random from $\{1, 2, \ldots, T\}$.}
\end{algorithmic}
\end{algorithm}

\begin{prop}[Burkholder-Davis-Gundy inequality]
    \label{Thm:BDG}
		Let $\{\bwk\}_k$ be a vector-valued martingale adapted to the filtration $\{\mathcal U_k\}_k$ and $\boldsymbol{w}_{0}=0$. For all $p \in (1,\infty)$, there is a constant $C_p > 0$, depending only on $p$, such that
		\[ \Exp\Big[{{\sup}_{k \geq 0} \|\bwk\|^p}\Big] \leq C_p \cdot \Exp\Big[\Big( {\sum}_{k=1}^\infty \|\bwk-\bwkm\|^2 \Big)^{\frac{p}{2}} \Big]. \]
\end{prop}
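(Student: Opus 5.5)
The plan is to prove the inequality in two steps. First, Doob's maximal inequality reduces the supremum to a terminal value. Second, Burkholder's dimension-free inequality for differentially subordinate Hilbert-space-valued martingales \citep{burkholder1988sharp} compares the terminal value with the square function, once the square function is realized as the norm of an auxiliary martingale. Working through the norm $\|\cdot\|$ throughout, rather than coordinatewise, is what keeps $C_p$ independent of the dimension $n+m$.

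\emph{Step 1 (reduction to a finite horizon and to the terminal value).} Fix $N$ and write $\boldsymbol d_k := \bwk - \bwkm$ and $S_N := (\sum_{k=1}^N \|\boldsymbol d_k\|^2)^{1/2}$. Since $\|\cdot\|$ is convex, conditional Jensen shows that $\{\|\bwk\|\}_{k\le N}$ is a nonnegative submartingale. (If $\Exp\|\bwk\|^p$ is infinite for some $k\le N$, I would first localize by a stopping time or truncate $S_N$; I expect this to be routine.) Doob's $L^p$ maximal inequality, valid for $p\in(1,\infty)$, then gives
\[
\Exp\Big[\max_{k\le N}\|\bwk\|^p\Big] \le \Big(\tfrac{p}{p-1}\Big)^p \Exp\big[\|\boldsymbol w_N\|^p\big].
\]
By monotone convergence in $N$, it therefore suffices to show $\Exp\|\boldsymbol w_N\|^p \le C'_p\, \Exp[S_N^p]$ with $C'_p$ independent of $N$ and of the dimension. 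Then $C_p := (p/(p-1))^p C'_p$.

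\emph{Step 2 (square function as a martingale norm).} Let $e_1,e_2,\dots$ be the standard basis of $\ell^2$. Define the $\mathbb R^{n+m}\otimes\ell^2$-valued process
\[
\boldsymbol F_k := \sum_{j=1}^k \boldsymbol d_j\otimes e_j .
\]
Each increment $\boldsymbol d_j\otimes e_j$ has zero conditional mean given $\mathcal U_{j-1}$, so $\{\boldsymbol F_k\}$ is a martingale with respect to $\{\mathcal U_k\}$. Because the directions $e_j$ are orthogonal, $\|\boldsymbol F_N\| = S_N$ exactly. Moreover, the increments satisfy $\|\boldsymbol d_k\| = \|\boldsymbol d_k\otimes e_k\|$, so $\boldsymbol w$ is differentially subordinate to $\boldsymbol F$. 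Both processes take values in Hilbert spaces.

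\emph{Step 3 (Burkholder's inequality).} Invoke Burkholder's sharp inequality: if $\boldsymbol g$ is differentially subordinate to $\boldsymbol f$, both Hilbert-space valued, then $\|\boldsymbol g_N\|_{L^p}\le (p^*-1)\|\boldsymbol f_N\|_{L^p}$ with $p^*=\max\{p,p/(p-1)\}$. Applied to $\boldsymbol g=\boldsymbol w$ and $\boldsymbol f=\boldsymbol F$, this yields $\Exp\|\boldsymbol w_N\|^p\le (p^*-1)^p\Exp[S_N^p]$, which is the bound required in Step 1 with $C'_p=(p^*-1)^p$.

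The main obstacle is Step 3 itself. Its proof rests on Burkholder's special function
\[
V(x,y)=\|y\|^p-(p^*-1)^p\|x\|^p,
\]
and on a majorant $U\ge V$ that is zigzag-concave and satisfies $U(x,y)\le 0$ whenever $\|y\|\le\|x\|$. Showing that $\Exp\, U(\boldsymbol f_k,\boldsymbol g_k)$ is nonincreasing in $k$ for Hilbert-valued, differentially subordinate pairs is the genuinely delicate part. I would cite \citep{burkholder1988sharp} for it rather than reprove it. If a self-contained route is preferred, a Davis decomposition combined with a good-$\lambda$ argument using only $\|\cdot\|$ would also give a dimension-free constant, at the cost of a worse $C_p$.
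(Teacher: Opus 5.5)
Your argument is correct. The paper gives no proof of this proposition: it cites the Burkholder--Davis--Gundy inequality as a black box and takes $C_p=p^p$ for $p\ge 2$ from Burkholder's sharp-inequality paper.

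Your route supplies the derivation behind that citation. You apply Doob's $L^p$ maximal inequality to the submartingale $\|\boldsymbol w_k\|$. You then apply Burkholder's Hilbert-space differential-subordination inequality to $\boldsymbol w$ and the lifted square-function martingale $\boldsymbol F_k=\sum_{j\le k}\boldsymbol d_j\otimes e_j$. For $p\ge 2$ you have $p^*-1=p-1$, so your constant is $\big(\tfrac{p}{p-1}\big)^p(p-1)^p=p^p$. This reproduces exactly the values $C_2=4$ and $C_3=27$ used in Lemmas~\ref{lem:controlled-q-deviation} and~\ref{lem:controlled-whgy-deviation}.

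Spelling the argument out also makes explicit that the constant is dimension-free and holds in any finite-dimensional inner-product space. The paper needs exactly this in Lemma~\ref{lem:controlled-whgy-deviation}, where it applies the proposition to matrix-valued martingales in the Frobenius norm and to within-batch partial sums over $i$.

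Two small tidy-ups.

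\begin{itemize}
\item The localization you worry about in Step 1 is unnecessary. If $\Exp[S_N^p]<\infty$, then $\|\boldsymbol d_k\|\le S_N$ for every $k\le N$, so $\boldsymbol w_N\in L^p$ by Minkowski's inequality. If $\Exp[S_N^p]=\infty$, there is nothing to prove.
\item Some versions of Burkholder's theorem require $\boldsymbol f$ and $\boldsymbol g$ to take values in the same Hilbert space. If yours does, embed $\mathbb{R}^{n+m}$ isometrically into $\mathbb{R}^{n+m}\otimes\ell^2$ via $x\mapsto x\otimes e_0$, where $e_0$ is a unit vector orthogonal to $e_1,e_2,\dots$. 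Differential subordination and all norms are preserved.
\end{itemize}
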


For $p \geq 2$, it holds that $C_p = p^p$ \citep[Equation (3.4)]{burkholder1988sharp}. The BDG inequality allows us to establish the following upper bounds on the stochastic estimators $\bgk$, $\bHbk$, and $\bgyk$ under the sampling scheme \eqref{eq:batch-size}. In the following, for the estimator $\bgk$, we first prove a second-moment estimate using the BDG inequality with $p=2$. 
Throughout this section, let $\mathscr B_k$ denote the collection of all fresh random batches sampled at iteration $k$, i.e.,
\(
\mathscr B_k
:=
(\cBgko,\cBgkt,\cBhko,\cBhkt,\cBgky).
\)
Since $\boldsymbol{z}_0$ is deterministic, we define the filtration
$\cF_0:=\{\emptyset,\Omega\}$,
$\cF_k:=\sigma(\mathscr B_0,\ldots,\mathscr B_{k-1}),$ $k\ge 1$, 
and write
\(
\Exp_k[\cdot]:=\Exp[\cdot\mid \cF_k].
\)

\begin{lem}
    \label{lem:controlled-q-deviation}
    Suppose Assumptions \ref{Assumption_1} and \ref{ass:stoch} hold. Let $\{\bxik\}_k$ and $\{\bzk\}_k$ be generated by Algorithm \ref{alg:stoch-cubic-minimax}. Define $\Delta_j:=\Exp[\hb(\bzj)]-\bar\Phi \geq 0$. If $\Delta_j\leq 2\Delta_0$ for all $0\leq j\leq k$, then
    \[
    \Exp[\|\bgk-\dhb(\bzk)\|^2]\leq \varepsilon^2.
    \]
\end{lem}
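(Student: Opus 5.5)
The plan is a standard SARAH-type variance decomposition within one epoch. The adaptive batch sizes are $\cF_j$-measurable, so the ratio $\|\bxi_{j-1}\|^2/|\cB^g_{j,1}|$ cancels pathwise before any expectation is taken. The hypothesis $\Delta_j\le 2\Delta_0$ is used only to control the second moment of $\dfy$ along the iterates.

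\textbf{Step 1 (moment bound on $\dfy$).} I will first show that $\Exp\|\dfy(\bzj)\|^2\le R_\Delta^2$ for all $j\le k$.
\begin{itemize}
\item By $\mu$-strong concavity, $\Phi(x)-f(x,y)\le \frac{1}{2\mu}\|\dfy(x,y)\|^2$.
\item Hence $\hb(z)-\bar\Phi\ge \hb(z)-\Phi(x)\ge \frac{\beta\mu-1}{2\mu}\|\dfy(z)\|^2$.
\item Taking expectations and using $\Delta_j\le 2\Delta_0$ gives $\Exp\|\dfy(\bzj)\|^2\le 4\mu\Delta_0/(\beta\mu-1)=R_\Delta^2$.
\end{itemize}

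\textbf{Step 2 (martingale decomposition).} Let $k_0\le k$ be the last index with $\operatorname{mod}(k_0,S^g)=0$, and put $\boldsymbol e_j:=\bgk[j]-\dhb(\bzj)$ (writing $\boldsymbol g_j$ for the estimator). From \eqref{eq:stochastic-estimators}, for $k_0<j\le k$ we have $\boldsymbol e_j=\boldsymbol e_{j-1}+\boldsymbol d_j$, where
\[
\boldsymbol d_j:=Q(\bzj;\cB^g_{j,1},\cB^g_{j,2})-Q(\boldsymbol z_{j-1};\cB^g_{j,1},\cB^g_{j,2})-[\dhb(\bzj)-\dhb(\boldsymbol z_{j-1})].
\]
The batches $\mathscr B_j$ are fresh and independent of $\cF_j$, while $\bzj$, $\boldsymbol z_{j-1}$ and the batch size are $\cF_j$-measurable. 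Together with the unbiasedness of $Q$ (independent batches), this gives $\Exp_j[\boldsymbol d_j]=0$. The BDG inequality (Proposition \ref{Thm:BDG}) with $p=2$, or simply orthogonality of martingale increments, then yields
\[
\Exp\|\boldsymbol e_k\|^2\le C\Big(\Exp\|\boldsymbol e_{k_0}\|^2+\sum_{j=k_0+1}^{k}\Exp\|\boldsymbol d_j\|^2\Big),
\]
with $C=1$ in the orthogonal version. The argument therefore reduces to two variance bounds, each targeted at $\varepsilon^2/32$ or smaller so that constants are absorbed.

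\textbf{Step 3 (epoch start).} $Q(\boldsymbol z_{k_0})$ is an average of $B$ i.i.d.\ terms $(I+\beta\ddF(z;\zeta_1)P)\dF(z;\zeta_2)$. I will split the centered term as
\[
(I+\beta\ddF_1P)(\dF_2-\df)+\beta(\ddF_1-\ddf)P\df .
\]
\begin{itemize}
\item For the first piece, use independence of $\zeta_1,\zeta_2$ and Jensen's inequality to pass from third moments to second moments: $\Exp\|\ddF\|^2\le\sigt^2$ and $\Exp\|\dF-\df\|^2\le\sigo^2$. This bounds its second moment by $(1+\beta\sigt)^2\sigo^2$.
\item For the second piece, note $\|P\df\|=\|\dfy\|$, so its second moment is at most $\beta^2\sigt^2\|\dfy(\boldsymbol z_{k_0})\|^2$.
\end{itemize}
Taking total expectation and applying Step 1 gives $\Exp\|\boldsymbol e_{k_0}\|^2\le 2c_1/B\le\varepsilon^2/32$.

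\textbf{Step 4 (increments), the main obstacle.} Conditionally on $\cF_j$, $\Exp_j\|\boldsymbol d_j\|^2\le B_j^{-1}\Exp_j\|\Psi_\zeta(\bzj)-\Psi_\zeta(\boldsymbol z_{j-1})\|^2$, where $\Psi_\zeta(z)=(I+\beta\ddF_1(z)P)\dF_2(z)$. I will split the difference as
\[
(I+\beta\ddF_1(\bzj)P)(\dF_2(\bzj)-\dF_2(\boldsymbol z_{j-1}))+\beta(\ddF_1(\bzj)-\ddF_1(\boldsymbol z_{j-1}))P\dF_2(\boldsymbol z_{j-1}).
\]
Independence and the mean-square Lipschitz conditions in Assumption \ref{ass:stoch} then give the bound
\[
2\big[(1+\beta\sigt)^2L^2+\beta^2\rho^2(\sigo^2+\|\dfy(\boldsymbol z_{j-1})\|^2)\big]\|\bxi_{j-1}\|^2 .
\]
The batch size satisfies $B_j\ge 64c_2S^g\|\bxi_{j-1}\|^2\varepsilon^{-2}$ and is $\cF_j$-measurable, so the factor $\|\bxi_{j-1}\|^2$ cancels pathwise. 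The floor-plus-one guarantees $B_j\ge1$, and the case $\bxi_{j-1}=0$ is trivial. What remains is a linear factor $\|\dfy(\boldsymbol z_{j-1})\|^2$; taking total expectation and invoking Step 1 bounds it by $R_\Delta^2$. This is exactly why $c_2$ contains $R_\Delta^2$. Each increment is therefore at most $\varepsilon^2/(32S^g)$, and summing at most $S^g$ of them gives at most $\varepsilon^2/32$.

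Combining Steps 3 and 4 gives $\Exp\|\boldsymbol e_k\|^2\le\varepsilon^2$, with room to spare.

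The delicate point throughout is to never condition on high-probability events and to keep every random batch size $\cF_j$-measurable. This is what preserves the martingale structure that the paper identifies as broken in \citep{zhou2020stochastic}.
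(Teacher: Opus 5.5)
Your proposal is correct and follows essentially the same route as the paper. The shared steps are the strong-concavity bound $\Exp\|\dfy(\bzj)\|^2\le R_\Delta^2$, the same two-piece splittings of the restart and recursive single-pair errors, pathwise cancellation of $\|\boldsymbol{\xi}_{j-1}\|^2$ against the $\cF_j$-measurable batch size, and a $p=2$ martingale aggregation over one epoch. The only differences are cosmetic and improve the constants: you bound the increment variance by the raw second moment of $\Psi_\zeta(\bzj)-\Psi_\zeta(\boldsymbol{z}_{j-1})$ instead of carrying the deterministic mean terms through Minkowski (factor $2$ rather than the paper's $8$), and you use orthogonality of martingale increments (constant $1$) where the paper invokes BDG with $C_2=4$.
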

\begin{proof}
By the $\mu$-strong concavity of $f(x,\cdot)$ and $\bar \Phi := \inf_x \Phi(x) > -\infty$, it holds that
\[
\hb(z)\geq \Phi(x)+\frac{\beta\muy-1}{2\muy}\|\dfy(z)\|^2
\geq \bar\Phi+\frac{\beta\muy-1}{2\muy}\|\dfy(z)\|^2.
\]
Then, for every $0\leq j\leq k$, rearranging, taking expectation, and invoking $\beta > \mu^{-1}$ yields
\[
\Exp[\|\dfy(\bzj)\|^2]\leq \frac{2\mu}{\beta\mu-1} (\Exp[\hb(\bzj)] - \bar\Phi) \leq  \frac{4\muy\Delta_0}{\beta\muy-1}=:R_\Delta^2,
\]
where the second inequality is due to $\Delta_{j} \leq 2\Delta_0$. 

Now, we prove the second-moment estimate in three major steps. First, we establish single-sample-pair bounds for the restart estimator and the recursive variance-reduction difference estimator. Second, we use conditional independence and the prescribed batch sizes to convert these bounds into mini-batch increment bounds for the martingale differences. Third, we aggregate the mini-batch increments over one epoch using the BDG inequality.

\noindent\textbf{Step 1. Single-sample-pair bounds.} For a single pair of samples, define
\[
\begin{aligned}
\boldsymbol A_{k,1}^{i}&:=I+\beta\ddF(\bzk;\zeta_{i,1})P,
&\boldsymbol b_{k,2}^{i}&:=\dF(\bzk;\zeta_{i,2}),\\
\boldsymbol A_k&:=I+\beta\ddf(\bzk)P,
&\boldsymbol b_k&:=\df(\bzk).
\end{aligned}
\]
Observe that
\(
\boldsymbol A_{k,1}^{i}\boldsymbol b_{k,2}^{i}-\boldsymbol A_k\boldsymbol b_k
=\boldsymbol A_{k,1}^{i}(\boldsymbol b_{k,2}^{i}-\boldsymbol b_k)
+(\boldsymbol A_{k,1}^{i}-\boldsymbol A_k)\boldsymbol b_k
\)
and the two terms on the RHS are orthogonal in conditional expectation ($\Ek[\boldsymbol b_{k,2}^{i}-\boldsymbol b_k]=0$ and $\zeta_{i,1},\zeta_{i,2}$ being independent). Then, it holds that
\[
\begin{aligned}
&\Ek[\|\boldsymbol A_{k,1}^{i}\boldsymbol b_{k,2}^{i}-\boldsymbol A_k\boldsymbol b_k\|^2]\\
=&\Ek[\|\boldsymbol A_{k,1}^{i}(\boldsymbol b_{k,2}^{i}-\boldsymbol b_k)\|^2] + \Ek[\|(\boldsymbol A_{k,1}^{i}-\boldsymbol A_k)\boldsymbol b_k\|^2]\\
\leq& \{\Ek[\|\boldsymbol A_{k,1}^{i}\|^3]\}^{\frac23}
\{\Ek[\|\boldsymbol b_{k,2}^{i}-\boldsymbol b_k\|^3]\}^{\frac23}+\beta^2\Ek[\|\ddF(\bzk;\zeta_{i,1})-\ddf(\bzk)\|_F^2]\|P\boldsymbol b_k\|^2\\
\leq&(1+\beta\sigt)^2\sigo^2+\beta^2\sigt^2\|\dfy(\bzk)\|^2,
\end{aligned}
\]
where the first inequality follows from Jensen's and H{\"o}lder's inequality, and in the last inequality we utilized $\|P\|=1$, $\{\Ek[\|\boldsymbol b_{k,2}^{i}-\boldsymbol b_k\|^3]\}^{\frac13}\leq \sigo$, and
\[
\begin{aligned}
\{\Ek&[\|\boldsymbol{A}_{k,1}^i\|^3]\}^{\frac13} \leq 1 + \beta \Ek[\|\ddF(\bzk;\zeta_{i,1})\|_F^3]^{\frac13}\cdot \|P\| \leq 1 + \beta \sigt,\\
\Ek&[\|\ddF(\bzk;\zeta_{i,1})-\ddf(\bzk)\|_F^2]\leq \Ek[\|\ddF(\bzk;\zeta_{i,1})\|_F^2]\leq \sigt^2.
\end{aligned}
\]
Furthermore, by Minkowski's inequality, it holds that
\[
\begin{aligned}
&\{\Ek[\|\boldsymbol A_{k,1}^{i}\boldsymbol b_{k,2}^{i}-\boldsymbol A_{k-1,1}^{i}\boldsymbol b_{k-1,2}^{i}
-\boldsymbol A_k\boldsymbol b_k+\boldsymbol A_{k-1}\boldsymbol b_{k-1}\|^2]\}^{\frac12}\\
\leq& \{\Ek[\|\boldsymbol A_{k,1}^{i}(\boldsymbol b_{k,2}^{i}-\boldsymbol b_{k-1,2}^{i})\|^2]\}^{\frac12}
+\{\Ek[\|(\boldsymbol A_{k,1}^{i}-\boldsymbol A_{k-1,1}^{i})\boldsymbol b_{k-1,2}^{i}\|^2]\}^{\frac12}\\
&\hspace{1cm} +\|\boldsymbol A_k(\boldsymbol b_k-\boldsymbol b_{k-1})\|
+\|(\boldsymbol A_k-\boldsymbol A_{k-1})\boldsymbol b_{k-1}\|\\
\leq& \{\Ek[\|\boldsymbol A_{k,1}^{i}\|^2] \cdot\Ek[\|\boldsymbol b_{k,2}^{i}-\boldsymbol b_{k-1,2}^{i}\|^2]\}^{\frac12} \\
&\hspace{1cm}
+\beta \{\Ek[\|\ddF(\bzk;\zeta_{i,1})-\ddF(\bzkm;\zeta_{i,1})\|_F^2] \cdot \Ek[\|P\boldsymbol b_{k-1,2}^{i}\|^2]\}^{\frac12}\\
&\hspace{1cm} +\|\boldsymbol A_k\|\cdot\|\boldsymbol b_k-\boldsymbol b_{k-1}\|
+\beta\|\ddf(\bzk)-\ddf(\bzkm)\|\cdot\|P\boldsymbol b_{k-1}\|\\
\leq& 2L(1+\beta\sigt)\|\bxikm\|+2\beta\rho(\sigo^2+\|\dfy(\bzkm)\|^2)^{\frac12}\|\bxikm\|,
\end{aligned}
\]
where the second inequality is due to the conditional independence of $\zeta_{i,1}$ and $\zeta_{i,2}$, and last inequality follows from the Lipschitz continuity of $\ddf$ and $\ddF$ and the following estimates:
\[
\begin{aligned}
\|\boldsymbol A_k\|&\leq 1+\beta\sigt,
&\{\Ek[\|\boldsymbol{A}_{k,1}^i\|^2]\}^{\frac12}&\leq 1+\beta\sigt,\\
\{\Ek[\|\boldsymbol b_{k,2}^{i}-\boldsymbol b_{k-1,2}^{i}\|^2]\}^{\frac12}&\leq L\|\bxikm\|,
&\|\boldsymbol b_k-\boldsymbol b_{k-1}\|&\leq L\|\bxikm\|,\\
\{\Ek[\|P\boldsymbol b_{k-1,2}^{i}\|^2]\}^{\frac12}&\leq (\sigo^2+\|\dfy(\bzkm)\|^2)^{\frac12},
&\|P\boldsymbol b_{k-1}\|&= \|\dfy(\bzkm)\|.
\end{aligned}
\]
Thus, by Young's inequality, we have
\[
\begin{aligned}
&\Ek[\|\boldsymbol A_{k,1}^{i}\boldsymbol b_{k,2}^{i}-\boldsymbol A_{k-1,1}^{i}\boldsymbol b_{k-1,2}^{i}
-\boldsymbol A_k\boldsymbol b_k+\boldsymbol A_{k-1}\boldsymbol b_{k-1}\|^2]\\
\leq& 8\|\bxikm\|^2
\big[L^2(1+\beta\sigt)^2+\beta^2\rho^2(\sigo^2+\|\dfy(\bzkm)\|^2)\big].
\end{aligned}
\]

\noindent\textbf{Step 2. Mini-batch increment bounds}
Now, observe that $\bgk-\dhb(\bzk)=\sum_{j=k_0}^k\buj$, where $k_0=\lfloor k/S^g\rfloor S^g$ and
\[
\buj=
\begin{cases}
Q(\bzj;\mathcal B^g_{j,1},\mathcal B^g_{j,2})-\dhb(\bzj), & j=k_0,\\
Q(\bzj;\mathcal B^g_{j,1},\mathcal B^g_{j,2})
-Q(\bzjm;\mathcal B^g_{j,1},\mathcal B^g_{j,2})
-\dhb(\bzj)+\dhb(\bzjm), & j>k_0.
\end{cases}
\]
When $j=k_0$, it holds that
\[
\begin{aligned}
\Exp_j[\|\buj\|^2]
&=|\mathcal B^g_{j,1}|^{-2}\Exp_j\bigg[\bigg\|{\sum}_{i=1}^{|\mathcal B^g_{j,1}|}
\boldsymbol A_{j,1}^{i}\boldsymbol b_{j,2}^{i}-\boldsymbol A_j\boldsymbol b_j\bigg\|^2\bigg]\\
&= |\mathcal B^g_{j,1}|^{-2} {\sum}_{i=1}^{|\mathcal B^g_{j,1}|} \Exp_j[\|
\boldsymbol A_{j,1}^{i}\boldsymbol b_{j,2}^{i}-\boldsymbol A_j\boldsymbol b_j\|^2]\\
&\leq 2|\mathcal B^g_{j,1}|^{-1}
\big[(1+\beta\sigt)^2\sigo^2+\beta^2\sigt^2\|\dfy(\bzj)\|^2\big],
\end{aligned}
\]
where the second line follows from the fact that the summands are conditionally independent and have zero mean.
Taking total expectation, using $\Exp[\|\dfy(\bzj)\|^2]\leq R_\Delta^2$, and invoking \eqref{eq:batch-size}, we obtain
\[
\Exp[\|\buj\|^2]\leq 2|\mathcal B^g_{j,1}|^{-1}c_1\leq \frac{\varepsilon^2}{32}.
\]
When $j>k_0$, the summands in $\buj$ are also conditionally independent and have conditional mean zero. Thus, we obtain
\[
\begin{aligned}
\Exp_j[\|\buj\|^2]
&=|\mathcal B^g_{j,1}|^{-2}\Exp_j\bigg[\bigg\|{\sum}_{i=1}^{|\mathcal B^g_{j,1}|}
\boldsymbol A_{j,1}^{i}\boldsymbol b_{j,2}^{i}-\boldsymbol A_{j-1,1}^{i}\boldsymbol b_{j-1,2}^{i}
-\boldsymbol A_j\boldsymbol b_j+\boldsymbol A_{j-1}\boldsymbol b_{j-1}\bigg\|^2\bigg]\\
&= |\mathcal B^g_{j,1}|^{-2}{\sum}_{i=1}^{|\mathcal B^g_{j,1}|} \Exp_j[\|
\boldsymbol A_{j,1}^{i}\boldsymbol b_{j,2}^{i}-\boldsymbol A_{j-1,1}^{i}\boldsymbol b_{j-1,2}^{i}
-\boldsymbol A_j\boldsymbol b_j+\boldsymbol A_{j-1}\boldsymbol b_{j-1}\|^2]\\
&\leq \frac{\varepsilon^2}{8S^g}\cdot
\frac{L^2(1+\beta\sigt)^2+\beta^2\rho^2(\sigo^2+\|\dfy(\bzjm)\|^2)}{c_2}.
\end{aligned}
\]
Taking total expectation and using $\Exp[\|\dfy(\bzjm)\|^2]\leq R_\Delta^2$ gives
\[
\Exp[\|\buj\|^2]\leq \frac{\varepsilon^2}{8S^g}.
\]

\noindent \textbf{Step 3. Epoch-level BDG aggregation.} Since the $\buj$'s are martingale differences with $\Exp_j[\buj]=0$, applying the BDG inequality  with $p=2$ and $C_2=4$ implies
\[
\begin{aligned}
\Exp[\|\bgk-\dhb(\bzk)\|^2]
= \Exp\bigg[\bigg\|{\sum}_{j=k_0}^k\buj\bigg\|^2\bigg]
&\leq C_2{\sum}_{j=k_0}^k\Exp[\|\buj\|^2]
\leq 4\bigg(\frac{\varepsilon^2}{32}+S^g\cdot\frac{\varepsilon^2}{8S^g}\bigg)
\leq \varepsilon^2.
\end{aligned}
\]
This completes the proof.
\end{proof}

Next, for $\Hbk$ and $\bgyk$, we work  with the third-moment BDG estimate using $p=3$. 

\begin{lem}
    \label{lem:controlled-whgy-deviation}
    Suppose Assumptions \ref{Assumption_1} and \ref{ass:stoch} hold. Let $\{\bxik\}_k$ and $\{\bzk\}_k$ be generated by Algorithm \ref{alg:stoch-cubic-minimax}, and set $c=(3\beta L+1)\rho/6$. Then, for all $k\geq0$, it holds that
    \[
    \Exp[\|\bHbk-\Hb(\bzk)\|_F^3]\leq (c\varepsilon)^{3/2} \qaq
    \Exp[\|\bgyk-\dfy(\bzk)\|^3]\leq(\beta\rho)^{-3}(c\varepsilon)^{3/2}.
    \]
\end{lem}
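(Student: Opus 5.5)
The plan is to mirror the three-step structure of Lemma \ref{lem:controlled-q-deviation}, working throughout with third moments and applying the BDG inequality (Proposition \ref{Thm:BDG}) with $p=3$ and $C_3=27$.

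The $\bHbk$ bound needs no assumption on $\Delta_j$, unlike Lemma \ref{lem:controlled-q-deviation}. The reason is that the Hessian estimators involve only $\ddF$, so no factor $\|\dfy\|$ appears. This is why the statement holds for all $k$.

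The first tool is a mini-batch moment bound. For conditionally i.i.d., conditionally zero-mean random elements $X_1,\dots,X_B$ of a Hilbert space (vectors, or matrices with the Frobenius norm), I will apply BDG to the partial sums over $i$ and then Hölder ($\sum_i a_i^2\le B^{1/3}(\sum_i a_i^3)^{2/3}$). This should give
\[
\Exp\Big[\Big\|\tfrac1B{\sum}_{i=1}^B X_i\Big\|^3\Big]\le 27\,B^{-3/2}\,\Exp[\|X_1\|^3].
\]
This settles $\bgyk$ immediately: take $X_i=\nabla_y F(\bzk;\zeta_{i})-\dfy(\bzk)$, whose third moment is at most $\sigo^3$. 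With $|\cBgky|\ge 36(\sigo\beta\rho)^2(c\varepsilon)^{-1}$ this gives $27\cdot 36^{-3/2}(\beta\rho)^{-3}(c\varepsilon)^{3/2}\le(\beta\rho)^{-3}(c\varepsilon)^{3/2}$. Here I condition on $\cF_k$, where $\bzk$ is measurable, and then take total expectation.

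For $\bHbk$ I follow the three steps of Lemma \ref{lem:controlled-q-deviation}.

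\textbf{Step 1 (single-sample-pair bounds).} Write $\boldsymbol A^i_{k,1}=I+\beta\ddF(\bzk;\zeta_{i,1})P$, $\boldsymbol B^i_{k,2}=\ddF(\bzk;\zeta_{i,2})$, $\boldsymbol A_k=I+\beta\ddf(\bzk)P$ and $\boldsymbol B_k=\ddf(\bzk)$. I use $\|XY\|_F\le\|X\|\|Y\|_F$, Minkowski in $L^3$, and independence of $\zeta_{i,1},\zeta_{i,2}$, which makes moments factor.
\begin{itemize}
\item Restart pair: $\{\Ek\|\boldsymbol A^i_{k,1}\boldsymbol B^i_{k,2}-\boldsymbol A_k\boldsymbol B_k\|_F^3\}^{1/3}\le(1+\beta\sigt)\sigt+(1+\beta L)L$. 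Since $\|\ddf\|\le L$ and $\|\ddf\|_F\le\sigt$ by Jensen, I expect this to be at most $2\sigt c_3$.
\item Recursive difference: split as $\boldsymbol A^i_{k,1}(\boldsymbol B^i_{k,2}-\boldsymbol B^i_{k-1,2})+(\boldsymbol A^i_{k,1}-\boldsymbol A^i_{k-1,1})\boldsymbol B^i_{k-1,2}$ plus the deterministic analogue. The third-moment Lipschitz bounds of Assumption \ref{ass:stoch} should give an $L^3$-norm of at most $2\rho(1+\beta\sigt)\|\bxikm\|+2\beta\rho(\sigt\vee L)\|\bxikm\|$. I expect this to be at most $4\rho c_3\|\bxikm\|$, possibly with a small absolute constant to be absorbed below.
\end{itemize}
Symmetrization only decreases Frobenius errors, so it can be ignored.

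\textbf{Step 2 (mini-batch increments).} Write $\bHbk-\Hb(\bzk)=\sum_{j=k_0}^k\boldsymbol u_j$ with $k_0=\lfloor k/S^h\rfloor S^h$, a sum of martingale differences. Applying the mini-batch bound conditionally on $\cF_j$ gives the following.
\begin{itemize}
\item For $j=k_0$: $\Exp\|\boldsymbol u_j\|_F^3\le27\cdot8\sigt^3c_3^3\,(648\sigt^2c_3^2/(c\varepsilon))^{-3/2}$, an absolute small constant times $(c\varepsilon)^{3/2}$.
\item For $j>k_0$: the batch size exceeds $648\rho^2c_3^2S^h\|\boldsymbol\xi_{j-1}\|^2/(c\varepsilon)$. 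This random quantity is $\cF_j$-measurable, so it can be inserted inside $\Exp_j$ before taking total expectation. The $\|\boldsymbol\xi_{j-1}\|^3$ factors cancel exactly, leaving $\Exp\|\boldsymbol u_j\|_F^3\le\mathrm{const}\cdot(S^h)^{-3/2}(c\varepsilon)^{3/2}$.
\item If $\boldsymbol\xi_{j-1}=0$, the increment vanishes, and the ``$+1$'' in the batch size keeps it well defined.
\end{itemize}

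\textbf{Step 3 (epoch-level BDG).} Apply BDG with $p=3$ and then Hölder over at most $S^h$ terms:
\[
\Exp\Big\|{\sum}_j\boldsymbol u_j\Big\|_F^3\le27\,\Exp\Big({\sum}_j\|\boldsymbol u_j\|_F^2\Big)^{3/2}\le27\,(S^h)^{1/2}{\sum}_j\Exp\|\boldsymbol u_j\|_F^3.
\]
The $S^h$ recursive terms each carry $(S^h)^{-3/2}$, so the total is an absolute constant times $(c\varepsilon)^{3/2}$. The constant $648$ is presumably tuned so that this constant is at most $1$: roughly $27\cdot27\cdot8/648^{3/2}\approx0.35$ per block, summed over the two blocks.

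The main obstacle I anticipate is Step 1 for the recursive difference: I must bound the $L^3$-norm with constants that fit inside $c_3=1+\beta(L+\sigt)$. The second obstacle is justifying the conditional use of a random, $\cF_j$-measurable batch size inside the third-moment bound before taking total expectation. If the constant in Step 1 comes out slightly larger than $2\rho c_3$, I would check that the slack ($\approx0.35$ per block) still absorbs it.
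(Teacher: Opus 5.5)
\textbf{Gap in Step~3 (epoch-level aggregation).} Your H\"older step $\big(\sum_j a_j^2\big)^{3/2}\le N^{1/2}\sum_j a_j^3$, with $N=k-k_0+1\le S^h$, is harmless for the recursive increments. Their third moments are all of order $(S^h)^{-3/2}(c\varepsilon)^{3/2}$. The problem is that it also multiplies the single restart increment $\boldsymbol u_{k_0}$ by $\sqrt N$, and that increment's third moment is of order $(c\varepsilon)^{3/2}$ with no decay in $S^h$. That term alone contributes $27\sqrt N\cdot 27\cdot 162^{-3/2}(c\varepsilon)^{3/2}\approx 0.35\sqrt N\,(c\varepsilon)^{3/2}$. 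So your chain yields $O(\sqrt{S^h})\,(c\varepsilon)^{3/2}$, not an absolute constant.

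This loss is not cosmetic. Corollary~\ref{coro:complexity-stoch} takes $S^h\asymp\varepsilon^{-1/2}$, so you would lose a factor $\varepsilon^{-1/4}$, and that breaks the induction $\Delta_k\le 2\Delta_0$ in Theorem~\ref{thm:convergence-rates-stoch}. At the mini-batch level your H\"older step is fine, but only because the summands are identically distributed; at the epoch level they are not.

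The missing tool is Minkowski's inequality in $L^{3/2}$, applied to the nonnegative variables $\|\buj\|_F^2$:
\[
\Exp\Big[\Big(\sum_{j=k_0}^{k}\|\buj\|_F^2\Big)^{3/2}\Big]\le\Big(\sum_{j=k_0}^{k}\big(\Exp[\|\buj\|_F^3]\big)^{2/3}\Big)^{3/2}.
\]
This is exactly what the paper uses, and it weights each increment by its own size. The restart then contributes $27^{2/3}c\varepsilon/162$ inside the bracket. The at most $S^h$ recursive terms together contribute at most another $27^{2/3}c\varepsilon/162$. BDG then gives $27\,(2\cdot 27^{2/3}/162)^{3/2}(c\varepsilon)^{3/2}=27^2\cdot 81^{-3/2}(c\varepsilon)^{3/2}=(c\varepsilon)^{3/2}$.

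\textbf{No slack in the constants.} The same computation shows the constant $648$ leaves no room: two blocks of size $\approx0.35$ combine to $2^{3/2}\cdot0.35\approx1$, not $0.7$. So you cannot absorb the factor-two loss in your recursive bound $4\rho c_3\|\bxikm\|$; it would push the final constant to roughly $4$.

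You need the sharp bound $2\rho c_3\|\bxikm\|$, and your own splitting delivers it. In the deterministic part, use $\|\boldsymbol A_k\|\le 1+\beta L$ (not $1+\beta\sigt$) and
\[
\|(\boldsymbol A_k-\boldsymbol A_{k-1})\boldsymbol B_{k-1}\|_F\le\|\boldsymbol A_k-\boldsymbol A_{k-1}\|_F\|\boldsymbol B_{k-1}\|\le\beta\rho L\|\bxikm\|.
\]
The four pieces then sum to $\rho[(1+\beta\sigt)+\beta\sigt+(1+\beta L)+\beta L]\|\bxikm\|=2\rho c_3\|\bxikm\|$.

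Likewise, in the restart bound the deterministic piece is $\|\boldsymbol A_k\boldsymbol B_k\|_F\le(1+\beta L)\|\ddf(\bzk)\|_F\le(1+\beta L)\sigt$. Your $(1+\beta L)L$ is not a valid Frobenius bound. With the corrected piece you get $(2+\beta\sigt+\beta L)\sigt\le 2c_3\sigt$, as needed.

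Your argument for $\bgyk$ is fine; it even gives the sharper $\tfrac18(\beta\rho)^{-3}(c\varepsilon)^{3/2}$. Your handling of the $\cF_j$-measurable batch sizes is also fine.
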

\begin{proof}
We first prove the bound for $\bHbk$ in three major steps similar to those of the previous lemma: (1) establish single-sample-pair bounds for the restart estimator and the recursive variance-reduction difference estimator; (2) convert these bounds into mini-batch increment bounds for the martingale differences; (3) aggregate the mini-batch increments over one epoch using the BDG inequality.

\noindent\textbf{Step 1. Single-sample-pair bounds.} For a single pair of Hessian samples, define
\[
\begin{aligned}
\boldsymbol A_{k,1}^{i}&:=I_{n+m}+\beta\ddF(\bzk;\zeta_{i,1})P,
&\boldsymbol B_{k,2}^{i}&:=\ddF(\bzk;\zeta_{i,2}),\\
\boldsymbol A_k&:=I_{n+m}+\beta\ddf(\bzk)P,
&\boldsymbol B_k&:=\ddf(\bzk).
\end{aligned}
\]
Invoking  Minkowski's inequality and the conditional independence of $\cBhko$ and $\cBhkt$ yields
\[\begin{aligned}
    &\{\Ek[\|\boldsymbol{A}_{k,1}^{i} \boldsymbol{B}_{k,2}^{i} - \boldsymbol{A}_{k}\boldsymbol{B}_{k} \|_F^3]\}^{\frac13} \\
    \leq& \{\Ek[\|\boldsymbol{A}_{k,1}^{i} - \boldsymbol{A}_{k}\|^3] \cdot \Ek[\|\boldsymbol{B}_{k,2}^{i} \|_F^3]\}^{\frac13}
    + \|\boldsymbol{A}_{k}\| \{ \Ek[\|\boldsymbol{B}_{k,2}^{i} - \boldsymbol{B}_{k} \|_F^3]\}^{\frac13}
    \\
    \leq& 2\beta\sigt^2 + 2(1+\beta L)\sigt = 2[1+\beta(\sigt + L)]\sigt=
    2c_3\sigt,
\end{aligned}\]
where in the second inequality we utilize the following estimates:
\[\begin{aligned}
    \|\boldsymbol{A}_k\| &\leq 
    1 + \beta \cdot \|\ddf(\bzk)\| \cdot \|P\| \leq 1 + \beta L,\\
    \{\Ek[\|\boldsymbol{A}_{k,1}^{i} - \boldsymbol{A}_{k}\|^3]\}^{\frac13} &\leq \beta \{\Ek[\|\ddF(\bzk;\zeta_{i,1}) - \ddf(\bzk)\|_F^3]\}^{\frac13} \cdot \|P\| 
    \leq 2\beta\sigt,\\
    \{ \Ek[\|\boldsymbol{B}_{k,2}^{i}\|_F^3] \}^{\frac13} &\leq \sigt, \qaq \{\Ek[\|\boldsymbol{B}_{k,2}^{i} - \boldsymbol{B}_{k}\|_F^3]\}^{\frac13} \leq 2\sigt.
\end{aligned}\]
Furthermore, it holds that
\[\begin{aligned}
    &\{\Ek[\|\boldsymbol{A}_{k,1}^{i} \boldsymbol{B}_{k,2}^{i} - \boldsymbol{A}_{k-1,1}^{i} \boldsymbol{B}_{k-1,2}^{i}- \boldsymbol{A}_{k}\boldsymbol{B}_{k} + \boldsymbol{A}_{k-1}\boldsymbol{B}_{k-1} \|_F^3]\}^{\frac13} \\
    \leq& \{\Ek[\|\boldsymbol{A}_{k,1}^{i} - \boldsymbol{A}_{k-1,1}^{i}\|^3] \cdot \Ek[\|\boldsymbol{B}_{k,2}^{i} \|_F^3]\}^{\frac13} + 
    \{\Ek[\|\boldsymbol{A}_{k-1,1}^{i}\|^3] \cdot \Ek[\|\boldsymbol{B}_{k,2}^{i} - \boldsymbol{B}_{k-1,2}^{i}\|_F^3]\}^{\frac13} + \\
    &\hspace{1cm} \|\boldsymbol{A}_{k} - \boldsymbol{A}_{k-1}\|_F \cdot \|\boldsymbol{B}_{k} \| + \|\boldsymbol{A}_{k-1}\| \cdot \|\boldsymbol{B}_{k} - \boldsymbol{B}_{k-1}\|_F
    \\
    \leq& [\beta\rho\cdot \sigt + (1+\beta\sigt)\cdot \rho + \beta\rho\cdot L + (1+\beta L)\cdot \rho]\|\bxikm\|\\
    \leq& 2\rho[1 + \beta(\sigt + L)]\|\bxikm\| = 2\rho c_3\|\bxikm\|,
\end{aligned}\]
where in the second inequality we utilize the following additional estimates:
\[
\begin{aligned} 
    \Ek[\|\boldsymbol{A}_{k-1,1}^i\|^3]^{\frac13} &\leq 1 + \beta \Ek[\|\ddF(\bzkm;\zeta_{i,1})\|_F^3]^{\frac13}\cdot \|P\| \leq 1 + \beta \sigt,\\
    \|\boldsymbol{A}_k - \boldsymbol{A}_{k-1}\|_F &\leq \beta \|\ddf(\bzk) - \ddf(\bzkm)\|_F \cdot \|P\| \leq \beta \rho \|\bxikm\|,\\
    \{\Ek[\|\boldsymbol{A}_{k,1}^{i} - \boldsymbol{A}_{k-1,1}^{i}\|^3]\}^{\frac13} &\leq \beta \{\Ek[\|\ddF(\bzk;\zeta_{i,1}) - \ddF(\bzkm;\zeta_{i,1})\|_F^3]\}^{\frac13} \|P\|
    \leq \beta \rho \|\bxikm\|,\\
    \|\boldsymbol{B}_k - \boldsymbol{B}_{k-1}\|_F &\leq \rho \|\bxikm\|,\qaq
    \{\Ek[\|\boldsymbol{B}_{k,2}^{i} - \boldsymbol{B}_{k-1,2}^{i}\|_F^3]\}^{\frac13} \leq \rho \|\bxikm\|.
\end{aligned}
\]
Here the deterministic Frobenius-norm $\rho$-Lipschitz continuity of $\ddf$ follows from the unbiasedness of $\ddF$ and Jensen's inequality applied to the mean cubic Hessian smoothness in Assumption \ref{ass:stoch}.

\noindent\textbf{Step 2. Mini-batch increment bounds.} Now, observe that $\bHbk - \Hb(\bzk) = \sum_{j=k_0}^k \buj,$ where $k_0 = \lfloor \frac{k}{S^h}\rfloor \cdot S^h$ and 
\[
\buj=
\begin{cases}
W(\bzj;\mathcal B^h_{j,1},\mathcal B^h_{j,2})-\Hb(\bzj), & j=k_0,\\
W(\bzj;\mathcal B^h_{j,1},\mathcal B^h_{j,2})
-W(\bzjm;\mathcal B^h_{j,1},\mathcal B^h_{j,2})
-\Hb(\bzj)+\Hb(\bzjm), & j>k_0.
\end{cases}
\]
Since the two batches $\mathcal B^h_{j,1}$ and $\mathcal B^h_{j,2}$ are jointly independent (conditioning on $\cF_j$), by the BDG inequality (Proposition \ref{Thm:BDG}) with $p = 3$,
it holds that when $j = k_0$,
\[\begin{aligned}
\Exp_j[\|\buj\|_F^{3}]
=& |\mathcal B^h_{j,1}|^{-3} \Exp_j \bigg[ \bigg\| {\sum}_{i=1}^{|\mathcal B^h_{j,1}|}  \boldsymbol{A}_{j,1}^{i} \boldsymbol{B}_{j,2}^{i} - \boldsymbol{A}_{j}\boldsymbol{B}_{j}\bigg\|_F^{3} \bigg]\\
\leq& C_{3} |\mathcal B^h_{j,1}|^{-3}  \Exp_j \bigg[ \bigg({\sum}_{i=1}^{|\mathcal B^h_{j,1}|} \|\boldsymbol{A}_{j,1}^{i} \boldsymbol{B}_{j,2}^{i} - \boldsymbol{A}_{j}\boldsymbol{B}_{j}\|_F^2\bigg)^{\frac32} \bigg]\\
\leq& C_3 |\mathcal B^h_{j,1}|^{-3} \bigg[{\sum}_{i=1}^{|\mathcal B^h_{j,1}|}   \bigg( \Exp_j[\|\boldsymbol{A}_{j,1}^{i} \boldsymbol{B}_{j,2}^{i} - \boldsymbol{A}_{j}\boldsymbol{B}_{j}\|_F^3]\bigg)^{\frac23}\bigg]^{\frac32} \\
\leq& 8C_3c_3^3\sigt^3|\mathcal B^h_{j,1}|^{-\frac32}\leq C_3(162)^{-\frac32}(c\varepsilon)^{\frac32},
\end{aligned}\]
where the second inequality follows from Minkowski's inequality for $\cL^{1.5}$-norm
and the last inequality follows from 
\eqref{eq:batch-size}.
When $j > k_0$, it holds that
\[\begin{aligned}
\Exp_j[\|\buj\|_F^{3}] 
=& |\mathcal B^h_{j,1}|^{-3}  \Exp_j \bigg[\bigg\|  {\sum}_{i=1}^{|\mathcal B^h_{j,1}|}  \boldsymbol{A}_{j,1}^{i} \boldsymbol{B}_{j,2}^{i} - \boldsymbol{A}_{j-1,1}^{i} \boldsymbol{B}_{j-1,2}^{i}- \boldsymbol{A}_{j}\boldsymbol{B}_{j} + \boldsymbol{A}_{j-1}\boldsymbol{B}_{j-1}\bigg\|_F^3 \bigg]\\
\leq& C_{3} |\mathcal B^h_{j,1}|^{-3}  \Exp_j \bigg[\bigg(  {\sum}_{i=1}^{|\mathcal B^h_{j,1}|}  \|\boldsymbol{A}_{j,1}^{i} \boldsymbol{B}_{j,2}^{i} - \boldsymbol{A}_{j-1,1}^{i} \boldsymbol{B}_{j-1,2}^{i}- \boldsymbol{A}_{j}\boldsymbol{B}_{j} + \boldsymbol{A}_{j-1}\boldsymbol{B}_{j-1}\|_F^2\bigg)^{\frac32} \bigg]\\
\leq& C_{3} |\mathcal B^h_{j,1}|^{-3} \bigg[{\sum}_{i=1}^{|\mathcal B^h_{j,1}|} \bigg(\Exp_j [ \|\boldsymbol{A}_{j,1}^{i} \boldsymbol{B}_{j,2}^{i} - \boldsymbol{A}_{j-1,1}^{i} \boldsymbol{B}_{j-1,2}^{i}- \boldsymbol{A}_{j}\boldsymbol{B}_{j} + \boldsymbol{A}_{j-1}\boldsymbol{B}_{j-1}\|_F^3] \bigg)^{\frac23} \bigg]^{\frac32}\\
\leq& 8C_{3}\rho^3 c_3^3 \|\boldsymbol{\xi}_{j-1}\|^3|\mathcal B^h_{j,1}|^{-\frac32} \leq C_3(162S^h)^{-\frac32} (c\varepsilon)^{\frac32},
\end{aligned}\]
where the last inequality again follows from 
\eqref{eq:batch-size}. 

\noindent \textbf{Step 3. Epoch-level BDG aggregation.} Since the $\buj$'s are martingale differences with $\Exp_j[\buj] = 0$, 
the BDG inequality implies
\[\begin{aligned}
\Exp[\|\bHbk - \Hb(\bzk)\|_F^{3}] &\leq C_{3}  \Exp \bigg[  \bigg({\sum}_{j=k_0}^k \|\buj\|_F^2\bigg)^{\frac32} \bigg]
\leq C_{3}  \bigg( {\sum}_{j=k_0}^k \Big(\Exp [ \|\buj\|_F^3 ]\Big)^{\frac23}\bigg)^{\frac32}\\
&\leq C_{3}  \Big[C_{3}^{\frac23}(162)^{-1}c\varepsilon + S^h \cdot C_3^{\frac23}(162S^h)^{-1} c\varepsilon\Big]^{\frac32}= 81^{-\frac32}C_3^2 (c\varepsilon)^{\frac32} = (c\varepsilon)^{\frac32},
\end{aligned}\]
where the third inequality follows from the previous two estimates and the fact that $k - k_0 \leq S^h$, and the last inequality follows from the fact that $C_3^2 = 27^2 = 81^{3/2}$. This completes the proof of the bound for $\bHbk$. 

For the bound on $\bgyk$, we directly apply the BDG inequality:
\[\begin{aligned}
    \Exp[\|\bgyk - \dfy(\bzk)\|^{3}] &\leq C_{3}  \Exp \bigg[ \bigg({\sum}_{\zeta \in \cBgky} \frac{1}{|\cBgky|^2} \|\nabla_y F(\bzk; \zeta ) - \dfy(\bzk)\|^2\bigg)^{\frac32} \bigg]\\
    &\leq C_3 \bigg\{{\sum}_{\zeta \in \cBgky}  \bigg[\frac{1}{|\cBgky|^3} \Exp[\|\nabla_y F(\bzk; \zeta ) - \dfy(\bzk)\|^3]\bigg]^{\frac23}\bigg\}^{\frac32} \\
    &\leq 8C_3  \sigo^3 |\cBgky|^{-3/2} \leq (\beta \rho)^{-3} (c\varepsilon)^{\frac32},
\end{aligned}\]
where the last inequality follows from \eqref{eq:batch-size} and the fact that $C_3 = 27$.
\end{proof}

These key lemmas allow us to establish the following iteration complexity for our S-ACQRN method (Algorithm \ref{alg:stoch-cubic-minimax}).

\begin{theo}
    \label{thm:convergence-rates-stoch}
        Suppose Assumptions \ref{Assumption_1} and \ref{ass:stoch} hold, let $\Delta_0 \geq \hb(\boldsymbol{z}_0) - \bar \Phi$, and set $c=(3\beta L+1)\rho/6$, $\beta>\mu^{-1}$. Then, for sufficiently small $\varepsilon>0$, running Algorithm \ref{alg:stoch-cubic-minimax} for 
        \[T = \bigg\lfloor \frac{\Delta_0 c^{\frac12}}{6\varepsilon^{\frac32}}\bigg\rfloor + 1\] iterations, the output $z_{\text{out}}$ will satisfy
    \[
        \Exp[\|\dhb(z_{\text{out}})\|]= O(\varepsilon) \qaq -\Exp[\lambda_{\min}(\nabla^2\hb(z_{\text{out}}))] \leq O(\sqrt{c\varepsilon}).
    \]
\end{theo}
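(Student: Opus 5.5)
The plan is to run the deterministic argument of Theorem \ref{thm:global-conv-cubic} in expectation, with the error terms of Lemmas \ref{lem:func-value-descent}--\ref{lem:relationship-dfy-xik} controlled by Lemmas \ref{lem:controlled-q-deviation} and \ref{lem:controlled-whgy-deviation}. Lemma \ref{lem:controlled-q-deviation} carries the hypothesis $\Delta_j\le 2\Delta_0$, so I will use an induction that keeps the expected optimality gap below $2\Delta_0$ throughout.

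\textbf{Step 1 (one-step descent in expectation).} Apply Lemma \ref{lem:func-value-descent} with $g=\bgk$, $H=\bHbk$, $g_y=\bgyk$, $\alo=2\beta\rho$ and $\alt=12c$. This gives
\[
\hb(\bzkp)\le \hb(\bzk)-c\|\bxik\|^3-\tfrac{\beta\rho}{2}\|\dfy(\bzk)\|\|\bxik\|^2+E_k,
\]
where $E_k=\frac{2}{3\sqrt c}\|\bgk-\dhb(\bzk)\|^{3/2}+\frac{1}{6c^2}\|\bHbk-\Hb(\bzk)\|^3+\frac{\alo^3}{6c^2}\|\bgyk-\dfy(\bzk)\|^3$. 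I take total expectations and bound each piece separately. Jensen together with Lemma \ref{lem:controlled-q-deviation} gives $\Exp\|\bgk-\dhb(\bzk)\|^{3/2}\le\varepsilon^{3/2}$. Lemma \ref{lem:controlled-whgy-deviation}, using $\|\cdot\|\le\|\cdot\|_F$, bounds the other two terms by $O(c^{-1/2}\varepsilon^{3/2})$; for the last one this uses $\alo^3(\beta\rho)^{-3}=8$. Hence $\Exp[E_k]\le C_0\varepsilon^{3/2}/\sqrt c$ for an absolute constant $C_0$.

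\textbf{Step 2 (induction).} Suppose $\Delta_j\le 2\Delta_0$ for all $j\le k$. Summing Step 1 over $j\le k$ gives $\Delta_{k+1}\le\Delta_0+(k+1)C_0\varepsilon^{3/2}/\sqrt c$. For $k+1\le T\approx \Delta_0\sqrt c/(6\varepsilon^{3/2})$ the accumulated error is at most $C_0\Delta_0/6$. If the constants work out, this is at most $\Delta_0$; otherwise I absorb the slack through the choice of $T$ or the ``sufficiently small $\varepsilon$'' clause. This closes the induction for $k\le T$. Telescoping once more yields
\[
\tfrac1T{\sum}_{k<T}\Exp\big[c\|\bxik\|^3+\tfrac{\beta\rho}{2}\|\dfy(\bzk)\|\|\bxik\|^2\big]\le \tfrac{\Delta_0}{T}+O(\varepsilon^{3/2}/\sqrt c)=O(\varepsilon^{3/2}/\sqrt c).
\]

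\textbf{Step 3 (stationarity of a uniformly random iterate).} For a uniformly random $\tilde k$, take expectations in Lemmas \ref{lem:upper-bound-of-h-gradient}, \ref{lem:lower-bound-surrogate-hessian} and \ref{lem:relationship-dfy-xik}. By Hölder, $\Exp\|\bxis\|^2=O(\varepsilon/c)$ and $\Exp\|\bxis\|=O(\sqrt{\varepsilon/c})$. The squared error terms are of order $\varepsilon$: $\Exp\|\bar H-H\|^2/c\le \Exp[\|\cdot\|^3]^{2/3}/c=\varepsilon$, and similarly for the $g_y$ term. The first-order error terms in Lemma \ref{lem:lower-bound-surrogate-hessian} are $O(\sqrt{c\varepsilon})$.

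\textbf{Main obstacle.} The hard part is the cross term $\|\dfy(\bzk)\|\|\bxik\|$ and the standalone $\|\dfy(\bzk)\|$ appearing in the Hessian bound. As in the deterministic proof, I plan to multiply Lemma \ref{lem:relationship-dfy-xik} by $\|\bxik\|$ to get
\[
\|\dfy\|\|\xi\|\le L\|\xi\|^2+(\beta\mu-1)^{-1}\big[10c\|\xi\|^3+3\beta\rho\|\dfy\|\|\xi\|^2+\text{errors}\cdot\|\xi\|\big].
\]
Taking expectations, the first two terms give $O(L\varepsilon/c)$ and $O(\varepsilon^{3/2}/\sqrt c)$. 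This yields $\Exp\|\dfy\|\|\xi\|=O(\varepsilon)$ once $\varepsilon$ is small, absorbing $L/c$ into the constant since $\beta L\rho\lesssim c$. Taking expectations directly in Lemma \ref{lem:relationship-dfy-xik} then gives $\Exp\|\dfy(\bzs)\|=O(L\sqrt{\varepsilon/c})+O(\varepsilon)$, so $\beta\rho\,\Exp\|\dfy\|=O(\sqrt{c\varepsilon})$.

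The delicate point is the term $3\beta\rho\|\dfy\|\|\xi\|$ inside Lemma \ref{lem:relationship-dfy-xik}. In the deterministic proof it was absorbed by the left-hand side because $\|\xi\|$ is small pointwise, but here $\|\xi\|$ is small only in expectation. I would first try to bound the product $\|\dfy\|\|\xi\|$ directly, using the Step 2 bound on $\Exp[\|\dfy\|\|\xi\|^2]$ together with Hölder and $\Exp\|\dfy\|^2\le R_\Delta^2$. If that fails, I would fall back on a Cauchy--Schwarz split, accepting larger constants under the small-$\varepsilon$ condition.

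Finally, the output index $\tilde k\in\{1,\dots,T\}$ corresponds to $z^+$ for the step taken at $\tilde k-1$, which matches the $z^+$ form of these lemmas. Plugging in, I expect $\Exp\|\dhb\|=O(\varepsilon)$ and $-\Exp\lambda_{\min}=O(\sqrt{c\varepsilon})$.
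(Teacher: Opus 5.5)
Your proposal is correct and follows the paper's proof essentially step by step. It uses the same induction keeping $\Delta_k\le 2\Delta_0$, and your constants do work out: $C_0=13/6\le 3$, so $TC_0c^{-1/2}\varepsilon^{3/2}\le\Delta_0$ for small $\varepsilon$ and no change of $T$ is needed. It then telescopes and runs the same cascade: Lemma~\ref{lem:relationship-dfy-xik} multiplied by $\|\bxis\|$ bounds $\Exp[\|\dfy(\bzs)\|\|\bxis\|]$, the unmultiplied lemma then bounds $\Exp[\|\dfy(\bzs)\|]$, and both feed into Lemmas~\ref{lem:upper-bound-of-h-gradient} and~\ref{lem:lower-bound-surrogate-hessian}.

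The ``delicate point'' you raise needs no absorption. In the multiplied inequality the term $3\beta\rho\|\dfy\|\|\xi\|^2$ is controlled by the telescoped estimate. In the unmultiplied one the term $3\beta\rho\|\dfy\|\|\xi\|$ is controlled by the bound you just derived. Your H\"older and Cauchy--Schwarz fallbacks through $R_\Delta$ are therefore unnecessary, and they would degrade the gradient rate to $\varepsilon^{3/4}$ or worse.
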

\begin{proof}
Let $\Delta_k:=\Exp[\hb(\bzk)]-\bar\Phi$. We first prove by induction that $\Delta_k\leq 2\Delta_0$ for all $0\leq k\leq T$. The base case holds trivially. Suppose this holds up to iteration $k$. Then Lemma \ref{lem:controlled-q-deviation} is applicable. With $\alo=2\beta\rho$ and $\alt=2(3\beta L+1)\rho=12c$, Lemma \ref{lem:func-value-descent} gives
\[
\begin{aligned}
c\|\bxik\|^3+\frac{\beta\rho}{2}\|\dfy(\bzk)\|\|\bxik\|^2
&\leq \hb(\bzk)-\hb(\bzkp)
+\frac{2}{3\sqrt c}\|\dhb(\bzk)-\bgk\|^{3/2}\\
&\quad+\frac{1}{6c^2}\|\Hb(\bzk)-\bHbk\|^3
+\frac{4(\beta\rho)^3}{3c^2}\|\dfy(\bzk)-\bgyk\|^3.
\end{aligned}
\]
By Lemmas \ref{lem:controlled-q-deviation}, \ref{lem:controlled-whgy-deviation}, and the monotonicity of $L_p$ norm in $p$, stochastic error terms on the RHS of this inequality are bounded:
\[
\begin{aligned}
&\frac{2}{3\sqrt c}\Exp[\|\dhb(\bzk)-\bgk\|^{3/2}]
+\frac{1}{6c^2}\Exp[\|\Hb(\bzk)-\bHbk\|^3]
+\frac{4(\beta\rho)^3}{3c^2}\Exp[\|\dfy(\bzk)-\bgyk\|^3]\\
\leq&
\frac{2}{3}c^{-1/2}\varepsilon^{3/2}
+\frac{1}{6}c^{-1/2}\varepsilon^{3/2}
+\frac{4}{3}c^{-1/2}\varepsilon^{3/2}
\leq 3c^{-1/2}\varepsilon^{3/2}.
\end{aligned}
\]
Thus, taking expectation over the first inequality yields
\[\begin{aligned}
c\Exp[\|\bxik\|^3]+\frac{\beta\rho}{2}\Exp[\|\dfy(\bzk)\|\|\bxik\|^2]
&\leq \Delta_k-\Delta_{k+1}+3c^{-1/2}\varepsilon^{3/2}.
\end{aligned}\]
This implies that $\Delta_{k+1}\leq \Delta_k+3c^{-1/2}\varepsilon^{3/2}$. For $\varepsilon$ sufficiently small, the choice of $T$ guarantees 
\[\Delta_{k+1} \leq \Delta_0 + 3T c^{-1/2}\varepsilon^{3/2} \leq 2\Delta_0,\] 
which closes the induction. 

Next, we bound $\Exp[\|\dhb(z_{\text{out}})\|]$ and $-\Exp[\lambda_{\min}(\nabla^2\hb(z_{\text{out}}))]$. Telescoping our previous estimate from $k=0$ to $T-1$ and dividing by $T$ gives
\begin{equation}
    \label{eq:cubic-stoch-compl-inter0}
    \frac{1}{T}{\sum}_{k=0}^{T-1} \Big\{c\Exp[\|\bxik\|^3] + \frac{\beta\rho}2 \Exp[\|\dfy(\bzk)\|\|\bxik\|^2]\Big\}
    \leq \frac{\Delta_0}{T} + 3c^{-1/2}\varepsilon^{3/2}
    =
    O(c^{-1/2}\varepsilon^{3/2}).
\end{equation}
Let $k^* = \tilde k -1$, where $\tilde k$ is the index of the output. Then, \eqref{eq:cubic-stoch-compl-inter0} implies
\[
c\Exp[\|\bxis\|^3] + \beta\rho\Exp[\|\dfy(\bzs)\|\|\bxis\|^2] = O(c^{-1/2}\varepsilon^{3/2}).
\]
Since $k^*$ is sampled uniformly after the iterates are generated, and the induction has verified $\Delta_j\leq 2\Delta_0$ for all $j\leq T$, the estimator bounds at $k^*$ follow by averaging the deterministic-index bounds in Lemmas \ref{lem:controlled-q-deviation} and \ref{lem:controlled-whgy-deviation}. Furthermore, in the following, we repeatedly use the fact that $c = \Omega(\beta L \rho)$ and that the $L_p$ norm is nondecreasing in $p$.
Now, invoking Lemma \ref{lem:relationship-dfy-xik}, multiplying by $\|\bxis\|$, and taking expectation yields
\[\begin{aligned}
\Exp[\|\dfy(\bzs)\|\|\bxis\|]
&\leq L\Exp[\|\bxis\|^2]+(\beta\muy-1)^{-1}\Big[3\beta\rho\Exp[\|\dfy(\bzs)\|\|\bxis\|^2]+10c\Exp[\|\bxis\|^3]\\
&\quad+\Exp[\|\dhb(\bzs)-\boldsymbol g_{k^*}\|\|\bxis\|]
+(2c)^{-1}\Exp[\|\Hb(\bzs)-\bar{\boldsymbol H}_{k^*}\|^2\|\bxis\|]\\
&\quad+2(\beta\rho)^2c^{-1}\Exp[\|\dfy(\bzs)-\boldsymbol g_{y,k^*}\|^2\|\bxis\|]\Big]\\
&=O\big(Lc^{-1}\varepsilon+(\beta\muy-1)^{-1}c^{-1/2}\varepsilon^{3/2}\big),
\end{aligned}\]
where the last line follows from H{\"o}lder's inequality, Jensen's inequality, and Lemmas \ref{lem:controlled-q-deviation} and \ref{lem:controlled-whgy-deviation}:
\[
\begin{aligned}
\Exp[\|\dhb(\bzs)-\boldsymbol g_{k^*}\|\|\bxis\|]
&\leq \{\Exp[\|\dhb(\bzs)-\boldsymbol g_{k^*}\|^{3/2}]\}^{2/3}\cdot\{\Exp[\|\bxis\|^3]\}^{1/3}\\
&= O(c^{-1/2}\varepsilon^{3/2}),\\
(2c)^{-1}\Exp[\|\Hb(\bzs)-\bar{\boldsymbol H}_{k^*}\|^2\|\bxis\|]
&\leq (2c)^{-1}\{\Exp[\|\Hb(\bzs)-\bar{\boldsymbol H}_{k^*}\|^3]\}^{2/3}\cdot\{\Exp[\|\bxis\|^3]\}^{1/3}\\
&= O(c^{-1/2}\varepsilon^{3/2}),\\
2(\beta\rho)^2c^{-1}
\Exp[\|\dfy(\bzs)-\boldsymbol g_{y,k^*}\|^2\|\bxis\|]
&\leq 2(\beta\rho)^2c^{-1}
\{\Exp[\|\dfy(\bzs)-\boldsymbol g_{y,k^*}\|^3]\}^{2/3}\cdot\{\Exp[\|\bxis\|^3]\}^{1/3}\\
&= O(c^{-1/2}\varepsilon^{3/2}).
\end{aligned}
\]
Combining this inequality with Lemma \ref{lem:upper-bound-of-h-gradient} gives
\[\begin{aligned}
\Exp[\|\nabla \hb(\bzsp)\|]
&\leq 3\beta\rho\Exp[\|\dfy(\bzs)\|\|\bxis\|]+10c\Exp[\|\bxis\|^2]+\Exp[\|\dhb(\bzs)-\boldsymbol g_{k^*}\|]\\
&\quad+(2c)^{-1}\Exp[\|\Hb(\bzs)-\bar{\boldsymbol H}_{k^*}\|^2]
+2(\beta\rho)^2c^{-1}\Exp[\|\dfy(\bzs)-\boldsymbol g_{y,k^*}\|^2]\\
&=O\big(\varepsilon+(\beta\muy-1)^{-1}c^{-1/2}\beta\rho\varepsilon^{3/2}\big),
\end{aligned}\]
where the error terms are controlled by
\[
\begin{aligned}
\Exp[\|\dhb(\bzs)-\boldsymbol g_{k^*}\|]&\leq \varepsilon,\\
(2c)^{-1}\Exp[\|\Hb(\bzs)-\bar{\boldsymbol H}_{k^*}\|^2]
&\leq (2c)^{-1}\{\Exp[\|\Hb(\bzs)-\bar{\boldsymbol H}_{k^*}\|^3]\}^{2/3}
= O(\varepsilon),\\
2(\beta\rho)^2c^{-1}\Exp[\|\dfy(\bzs)-\boldsymbol g_{y,k^*}\|^2]
&\leq 2(\beta\rho)^2c^{-1}\{\Exp[\|\dfy(\bzs)-\boldsymbol g_{y,k^*}\|^3]\}^{2/3}
= O(\varepsilon).
\end{aligned}
\]
Applying Lemma \ref{lem:relationship-dfy-xik} once more and taking expectation yields
\[\begin{aligned}
\Exp[\|\dfy(\bzs)\|]
&\leq L\Exp[\|\bxis\|]+(\beta\muy-1)^{-1}\Big[3\beta\rho\Exp[\|\dfy(\bzs)\|\|\bxis\|]+10c\Exp[\|\bxis\|^2]\\
&\quad+\Exp[\|\dhb(\bzs)-\boldsymbol g_{k^*}\|]+(2c)^{-1}\Exp[\|\Hb(\bzs)-\bar{\boldsymbol H}_{k^*}\|^2]\\
&\quad+2(\beta\rho)^2c^{-1}\Exp[\|\dfy(\bzs)-\boldsymbol g_{y,k^*}\|^2]\Big]\\
&=O\big(Lc^{-1/2}\varepsilon^{1/2}+(\beta\muy-1)^{-1}\varepsilon+(\beta\muy-1)^{-2}c^{-1/2}\beta\rho\varepsilon^{3/2}\big).
\end{aligned}\]
Combining this inequality with Lemma \ref{lem:lower-bound-surrogate-hessian} gives
\[\begin{aligned}
-\Exp[\lambda_{\min}(\nabla^2 \hb(\bzsp))]
&\leq 12c\Exp[\|\bxis\|]+3\beta\rho\Exp[\|\dfy(\bzs)\|]
+2\beta\rho\Exp[\|\dfy(\bzs)-\boldsymbol g_{y,k^*}\|]\\
&\quad+\Exp[\|\Hb(\bzs)-\bar{\boldsymbol H}_{k^*}\|]\\
&=O\big((c\varepsilon)^{1/2}+(\beta\muy-1)^{-1}\beta\rho\varepsilon+(\beta\muy-1)^{-2}c^{-1/2}(\beta\rho)^2\varepsilon^{3/2}\big),
\end{aligned}\]
where the error terms are controlled by
\[
\Exp[\|\dfy(\bzs)-\boldsymbol g_{y,k^*}\|]\leq(\beta\rho)^{-1}(c\varepsilon)^{1/2}
\qaq
\Exp[\|\Hb(\bzs)-\bar{\boldsymbol H}_{k^*}\|]\leq(c\varepsilon)^{1/2}.
\]
This completes the proof.
\end{proof}

This theorem immediately yields the following complexity result.

\begin{coro}
    \label{coro:complexity-stoch}
    Suppose Assumptions \ref{Assumption_1} and \ref{ass:stoch} hold, let $\Delta_0 \geq \Exp[\hb(\boldsymbol{z}_0)]-\bar\Phi$, and set $\beta = 2\muy^{-1}$, $c=(3\beta L+1)\rho/6$, $R_\Delta=2\sqrt{\muy\Delta_0/(\beta\muy-1)}$, $c_1=(1+\beta\sigt)^2\sigo^2+\beta^2\sigt^2R_\Delta^2$, $c_2=L^2(1+\beta\sigt)^2+\beta^2\rho^2(\sigo^2+R_\Delta^2)$, and $c_3=1+\beta(L+\sigt)$. Then, for $\varepsilon > 0$ sufficiently small, Algorithm \ref{alg:stoch-cubic-minimax} with cycle length
    \[
    S^g = \bigg\lfloor \frac{\sqrt{c_1c}}{\sqrt{c_2\varepsilon}} \bigg\rfloor + 1 
    \qaq S^h = \bigg\lfloor \frac{\sigt \sqrt{c}}{\rho\sqrt{\varepsilon}} \bigg\rfloor + 1,
    \]
    finds an $(\varepsilon, \sqrt{\kappa\rho\varepsilon})$-SOSP of \eqref{Prob_Min} (in expectation) with
    \[
    T=O(\sqrt{\kappa \rho}\Delta_0\varepsilon^{-3/2})
    \]
    iterations (subproblem oracle calls),
    \[
    O\Big(\sqrt{c_1c_2}\Delta_0\varepsilon^{-3}
    +\sigo^2\beta^2\rho^2c^{-1/2}\Delta_0\varepsilon^{-5/2}\Big)
    \]
    stochastic gradient evaluations and stochastic Hessian-vector products, and
    \[
    O\big(\sigt\rho c_3^2c^{-1}\Delta_0\varepsilon^{-2}\big)
    \]
    stochastic Hessian evaluations.
\end{coro}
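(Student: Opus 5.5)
The plan is to read Corollary \ref{coro:complexity-stoch} off Theorem \ref{thm:convergence-rates-stoch}, then count samples by summing the batch sizes in \eqref{eq:batch-size} over the $T$ iterations. No new probabilistic argument is needed.

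\emph{Accuracy and iteration count.} With $\beta = 2\muy^{-1}$ we have $\beta\muy - 1 = 1$ and $c = (6L/\muy + 1)\rho/6 = \Theta(\kappa\rho)$. The lower-order terms in the final estimates of Theorem \ref{thm:convergence-rates-stoch} are therefore absorbed for small $\varepsilon$. This gives $\Exp[\|\dhb(z_{\text{out}})\|] = O(\varepsilon)$ and $-\Exp[\lambda_{\min}(\nabla^2\hb(z_{\text{out}}))] = O(\sqrt{\kappa\rho\varepsilon})$. After rescaling $\varepsilon$ by a constant, this is an $(\varepsilon,\sqrt{\kappa\rho\varepsilon})$-SOSP in expectation. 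The iteration count $T = \lfloor \Delta_0 c^{1/2}/(6\varepsilon^{3/2})\rfloor + 1 = O(\sqrt{\kappa\rho}\,\Delta_0\varepsilon^{-3/2})$ follows directly. The cycle lengths $S^g, S^h$ do not enter the theorem, so any choice is admissible.

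\emph{Gradient count.} Restart iterations occur at most $\lceil T/S^g\rceil$ times, each costing $O(c_1\varepsilon^{-2})$. This totals $O(T c_1\varepsilon^{-2}/S^g + c_1\varepsilon^{-2})$. The non-restart iterations cost at most
\[
\sum_{k<T}\big(64c_2S^g\|\bxikm\|^2\varepsilon^{-2}+1\big)
\]
in expectation. The key step is to bound $\sum_k \Exp\|\bxik\|^2$. From \eqref{eq:cubic-stoch-compl-inter0}, $\sum_k\Exp\|\bxik\|^3 = O(T c^{-3/2}\varepsilon^{3/2})$. By Jensen/Hölder, $\sum_k\Exp\|\bxik\|^2 \le T^{1/3}(\sum_k\Exp\|\bxik\|^3)^{2/3} = O(T c^{-1}\varepsilon)$. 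Hence the non-restart gradient cost is $O(c_2 S^g T c^{-1}\varepsilon^{-1} + T)$. With $S^g\approx\sqrt{c_1c/(c_2\varepsilon)}$, both the restart and non-restart parts become $O(T\sqrt{c_1c_2}\,c^{-1/2}\varepsilon^{-3/2}) = O(\sqrt{c_1c_2}\,\Delta_0\varepsilon^{-3})$, using $Tc^{-1/2}\varepsilon^{3/2} = O(\Delta_0)$. The fixed $\cBgky$ batches cost $T\cdot O(\sigo^2\beta^2\rho^2 c^{-1}\varepsilon^{-1}) = O(\sigo^2\beta^2\rho^2c^{-1/2}\Delta_0\varepsilon^{-5/2})$. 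Each estimator $Q$ costs one stochastic gradient and one Hessian-vector product per sample. The additive $T$ and $c_1\varepsilon^{-2}$ terms are lower order for small $\varepsilon$.

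\emph{Hessian count and main difficulty.} The Hessian count is identical in structure. Restarts cost $(T/S^h)\,O(\sigt^2c_3^2(c\varepsilon)^{-1})$, and the adaptive batches cost $O(\rho^2c_3^2S^h c^{-1}\varepsilon^{-1}\cdot Tc^{-1}\varepsilon)$. Choosing $S^h\approx\sigt\sqrt{c}/(\rho\sqrt\varepsilon)$ balances both to $O(T\sigt\rho c_3^2 c^{-3/2}\varepsilon^{-1/2}) = O(\sigt\rho c_3^2c^{-1}\Delta_0\varepsilon^{-2})$.

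The main obstacle is that the batch sizes are random, depending on $\bxikm$. The counts are therefore expected sample complexities, and one must check that the expectation of the adaptive sizes is controlled by the telescoped bound \eqref{eq:cubic-stoch-compl-inter0}. That bound holds only after the induction $\Delta_k\le2\Delta_0$ inside the theorem. I would invoke it exactly as established there and apply Hölder across $k$ as above, rather than per iterate.
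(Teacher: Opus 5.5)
Your proposal is correct and follows essentially the same route as the paper. You read off $T=O(c^{1/2}\Delta_0\varepsilon^{-3/2})$ from Theorem \ref{thm:convergence-rates-stoch} and bound $\sum_k\Exp[\|\bxik\|^2]\le T^{1/3}(\sum_k\Exp[\|\bxik\|^3])^{2/3}=O(\Delta_0(c\varepsilon)^{-1/2})$ using the telescoped estimate \eqref{eq:cubic-stoch-compl-inter0}. You then sum the restart and adaptive batch sizes, with $S^g$ and $S^h$ chosen to balance the two contributions; your extra tracking of the $\lceil T/S^g\rceil$ restarts and the $+1$ terms is a harmless refinement the paper leaves implicit.
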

\begin{proof}
    From the previous theorem, $T=O(c^{1/2}\Delta_0\varepsilon^{-3/2})$. By H{\"o}lder's inequality and Jensen's inequality,
    \[
    \begin{aligned}
        {\sum}_{k=0}^{T-1} \Exp[\|\bxik\|^2]
        \leq T^{\frac13} \Big[{\sum}_{k=0}^{T-1} (\Exp[\|\bxik\|^2])^{\frac32}\Big]^{\frac23}
        \leq T^{\frac13} \Big[{\sum}_{k=0}^{T-1} \Exp[\|\bxik\|^3]\Big]^{\frac23} = O(\Delta_0(c\varepsilon)^{-1/2}).
    \end{aligned}
    \]
    Since the paired batches in $Q$ and $W$ have equal sizes, counting one representative batch changes the oracle complexities only by an absolute constant factor. Therefore,
    \[
    \begin{aligned}
        {\sum}_{k=0}^{T-1} \Exp[|\cBgko|]
        &= {\sum}_{k:\operatorname{mod}(k, S^g)=0} \Exp[|\cBgko|] + {\sum}_{k:\operatorname{mod}(k, S^g)\neq0} \Exp[|\cBgko|]\\
        &\leq O\Big(\varepsilon^{-2}\big[c_1T(S^g)^{-1}+c_2S^g{\sum}_{k=0}^{T-1}\Exp[\|\bxik\|^2]\big]\Big)\\
        &=O(\sqrt{c_1c_2}\Delta_0\varepsilon^{-3}).
    \end{aligned}
    \]
    Similarly,
    \[
    \begin{aligned}
        {\sum}_{k=0}^{T-1} \Exp[|\cBhko|]
        &= {\sum}_{k:\operatorname{mod}(k, S^h)=0} \Exp[|\cBhko|] + {\sum}_{k:\operatorname{mod}(k, S^h)\neq0} \Exp[|\cBhko|]\\
        &\leq O\Big(c_3^2(c\varepsilon)^{-1}\big[T(S^h)^{-1}\sigt^2+S^h\rho^2{\sum}_{k=0}^{T-1}\Exp[\|\bxik\|^2]\big]\Big)\\
        &=O\big(\sigt\rho c_3^2c^{-1}\Delta_0\varepsilon^{-2}\big).
    \end{aligned}
    \]
    Lastly,
    \[
    \begin{aligned}
        {\sum}_{k=0}^{T-1} \Exp[|\cBgky|]
        &=O\big(T\sigo^2\beta^2\rho^2(c\varepsilon)^{-1}\big)
        =O\big(\sigo^2\beta^2\rho^2c^{-1/2}\Delta_0\varepsilon^{-5/2}\big).
    \end{aligned}
    \]
    The final $\varepsilon$-orders follow from $\beta=2/\muy$ and $c=O(\kappa\rho)$. 
    This completes the proof.
\end{proof}

The complexity results established in Theorem \ref{thm:convergence-rates-stoch} and Corollary \ref{coro:complexity-stoch} provide several significant improvements over the existing literature. First, compared to the double-loop stochastic CRN method for NC-SC minimax problems \citep{chen2023a}, our single-loop framework achieves strictly better sample complexities in terms of the target accuracy $\varepsilon$. Second, our theoretical guarantees are established under strictly weaker assumptions regarding the stochastic oracle. While prior stochastic CRN methods \citep{zhou2020stochastic,chen2023a}  assume almost sure boundedness and Lipschitz smoothness of the stochastic gradient and Hessian to facilitate high-probability concentration inequalities, our analysis only requires a bounded third central moment for the stochastic gradient, a bounded third moment for the stochastic Hessian, and mean cubic smoothness. In particular, the bounded stochastic gradient condition is incompatible with the strong concavity assumption for NC-SC problems. Third,  when reducing our formulation to standard stochastic nonconvex minimization by setting $\beta = 0$, our algorithm naturally serves as a variance-reduced stochastic CRN method. We provide a new analysis framework based on BDG inequality to bypass the problematic high-probability arguments that cause technical gaps in the existing works \citep{zhou2020stochastic}.   

\section{Numerical Experiments}
\label{sec:numerical experiments}
In this section, we present numerical experiments on a robust nonlinear regression problem to validate the performance of our proposed deterministic method (Algorithm~\ref{alg:cubic-minimax}) and its stochastic counterpart (Algorithm~\ref{alg:stoch-cubic-minimax}). Specifically, given a feature vector $\boldsymbol w\in\mathbb R^d$ and a response $\boldsymbol v \in \bb R$ drawn from a joint distribution $(\boldsymbol w, \boldsymbol v) \sim Z$, we consider the following NC-SC minimax formulation:
\begin{equation}
\label{eq:robust-regression-minimax}
    \min_{x \in \mathbb{R}^d}\;
    \max_{y \in \mathbb{R}^{d'}}
    f(x,y)
    :=
    \Exp_{(\boldsymbol w, \boldsymbol v) \sim Z}\bigg[ \phi\Big(\langle \boldsymbol  w , x\rangle - \boldsymbol  v - p(y;\boldsymbol w, \boldsymbol v)\Big) + \frac{\rho_x}{2} \|x\|^2
        - \frac{\rho_y}{2}\|y\|^2 \bigg],
\end{equation}
where $\phi(\theta) = \theta^2/(1+\theta^2)$ is the smooth biweight loss function 
\citep{BeatonTukey1974,CarmonDuchiHinderSidford2017,LeeWright2020}, and $\rho_x\|x\|^2/2$ is a weight-decay regularization term. To promote robustness against outliers and corrupted features, we introduce an adversarial perturbation $p(y; \boldsymbol w , \boldsymbol v)$ parameterized by $y$. For simplicity, we consider a linear perturbation $p(y;\boldsymbol w,v):=[\boldsymbol w^\top,v]y$ (so $d' = d+1$) and regularize its magnitude through a quadratic penalty on $y$. Similar adversarially perturbed regression models have been widely used in robust learning; see, e.g., \citep{sinha2017certifying,lin2020gradient,ribeiro2023regularization,wang2024efficient}. All numerical experiments are implemented on a server equipped with an Apple M1 Max CPU, running Python 3.13.5, NumPy 2.1.3, and SciPy 1.15.3. 

We utilize the E2006-tfidf dataset from the LIBSVM library\footnote{Available at \url{https://www.csie.ntu.edu.tw/~cjlin/libsvmtools/datasets/regression.html}.}, which contains 16,087 data points and 150,360 features. We apply a sparse random projection\footnote{Implemented using \texttt{sklearn.random\_projection.SparseRandomProjection} with epsilon set to 0.3.} to this dataset, reducing the feature dimension to 1,076. Moreover, we preprocess the data by mean-centering both the features and response to zero.
%
The following subsections describe the deterministic and stochastic experimental settings and report the corresponding results.

\subsection{Deterministic Setting}
With a concrete dataset, the expectation in the robust regression \eqref{eq:robust-regression-minimax} reduces to an empirical average, allowing the problem to be solved via deterministic first- and second-order methods. We fix $\rho_x=0.01$ and vary $\rho_y$ to generate instances with different condition numbers $\kappa$. The initial points are set to all-zero vectors and the termination criterion is $\|\nabla f\| \le 10^{-12}$. For the CRN-based methods, the maximum iteration is set to 100 and all cubic subproblems are solved using a Lanczos solver~\citep{cartis2011adaptivea,cartis2011adaptiveb}. The tested first-order methods are run for at most 2000 iterations. 
In the following, we present the details of all tested algorithms. 

\noindent \textbf{ACQRN}: our single-loop ACQRN method (Algorithm \ref{alg:cubic-minimax}). We set $\beta = 2\mu^{-1}$, $\alo = 2\beta\rho$, and $\alt = 2(3\beta L + 1)\rho$, which match our theoretical analysis.  \vspace{0.1cm}

\noindent \textbf{D-CRN}: the double-loop CRN method \citep[Algorithm 2]{luo2022finding}. All hyperparameters are selected according to the theoretical results in \citep[Section 3]{luo2022finding}.\vspace{0.1cm}

\noindent \textbf{V-CRN}: vanilla CRN method \citep{nesterov2006cubic} applied to the minimization problem \eqref{Prob_Min} with $\beta = 2\mu^{-1}$.\vspace{0.1cm}

\noindent \textbf{TTGDA}: a modern variant of gradient-descent-ascent (GDA) method \citep{lin2020gradient,lin2025two}. The step sizes $\eta_y$ and $\eta_x$ are chosen from the best combinations in $\eta_y \in \{0.001, 0.005,$ $ 0.01, 0.05, 0.1\}$ and $\eta_x = \theta \eta_y$ with $\theta \in \{0.001, 0.01, 0.1\}$. It is worth noting that the step sizes with convergence guarantees in \cite[Theorem 17]{lin2025two}, are much smaller and lead to slow convergence. \vspace{0.1cm}

\noindent \textbf{GDA-BB}: GDA method with nonmonotone line search and adaptive BB step sizes \citep{ma2026linesearch}. The hyperparameters are selected according to  \citep[Section 5]{ma2026linesearch}. \vspace{0.1cm}
    
\noindent \textbf{L-BFGS-B}: The "L-BFGS-B" solver in the SciPy.minimize method applied to the minimization problem \eqref{Prob_Min} with $\beta = 2\mu^{-1}$.\vspace{0.1cm}

%
\noindent\textbf{Results for the deterministic setting.}
The numerical results are reported in Figure~\ref{fig:cubic-deter}. When $\kappa$ is small, all CRN-based methods converge successfully, while ACQRN achieves lower iteration counts and shorter CPU times. Compared with D-CRN, ACQRN avoids the computational overhead caused by the Nesterov-acceleration inner loop. Compared with V-CRN, ACQRN avoids the explicit computation of third-order derivatives. As the problem becomes increasingly ill-conditioned, the performance of D-CRN deteriorates rapidly. This behavior is consistent with its theoretical design: to guarantee descent, the cubic regularization parameter must scale with the Lipschitz constant of $\nabla^2\Phi(x)$, which is of order $\mathcal O(\kappa^3)$. Consequently, D-CRN is forced to take increasingly conservative steps. By contrast, the cubic regularization parameter in ACQRN scales only as $\mathcal O(\kappa)$, making it substantially more robust to ill-conditioning. Moreover, ACQRN outperforms the efficient GDA-BB method in the high-precision and ill-conditioned regimes. To further isolate the effect of ill-conditioning, Figure~\ref{fig:kappa-runtime} reports the CPU time required by each deterministic method over the finer grid $\kappa\in\{3,10,20,\ldots,100\}$; for methods that do not reach the target tolerance within the prescribed iteration budget, we report their final CPU time. Our ACQRN remains nearly insensitive to the growth of $\kappa$. 

\begin{figure}[H]
    \centering
    \includegraphics[width=\linewidth]{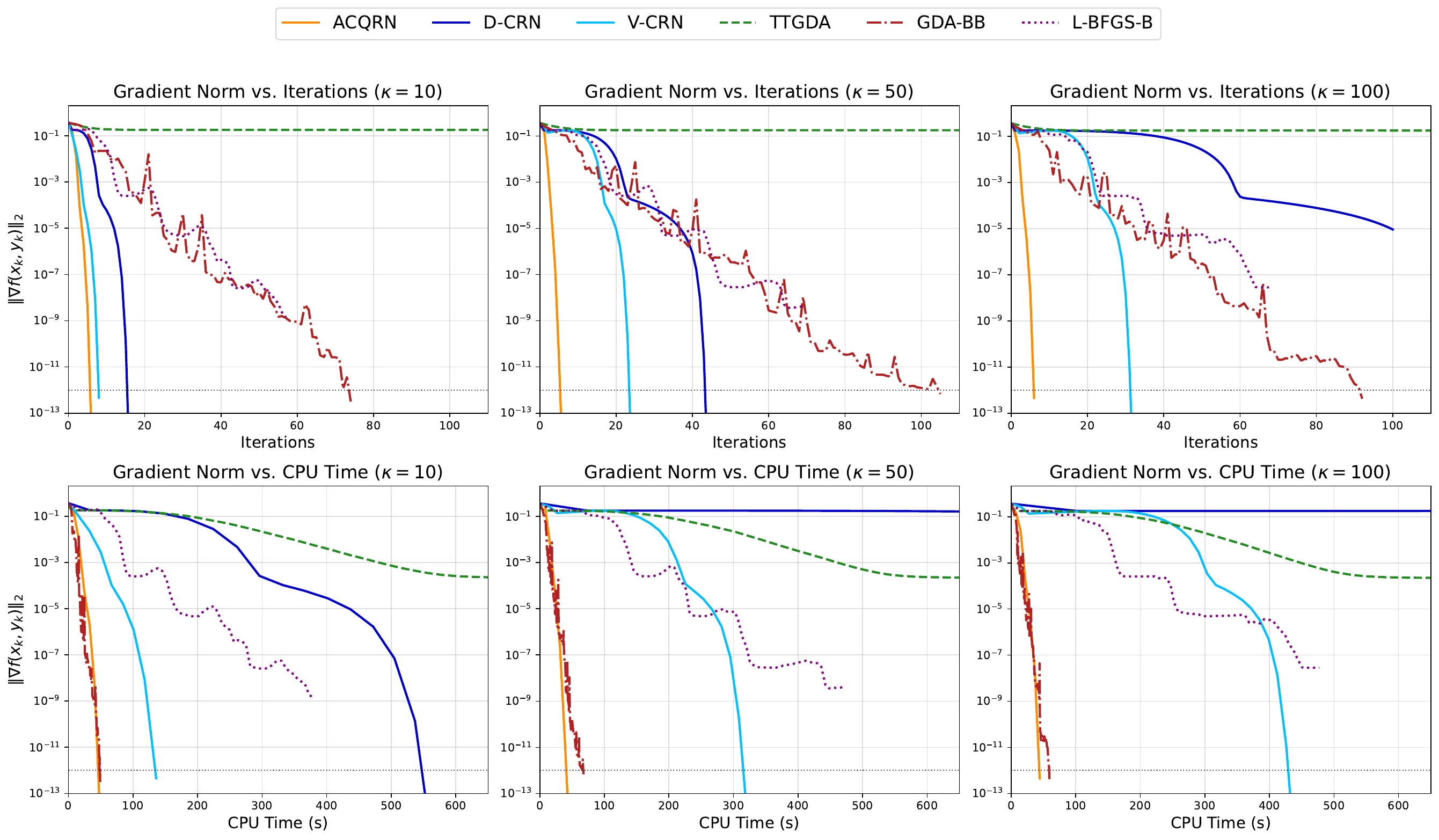}
    \caption{Deterministic robust-regression experiments for $\kappa\in\{10,50,100\}$. The top row reports $\|\nabla f(x_k,y_k)\|$ versus iteration, and the bottom row reports $\|\nabla f(x_k,y_k)\|$ versus CPU time. The horizontal dotted line represents the termination criterion $\|\nabla f\|\le 10^{-12}$.}
    \label{fig:cubic-deter}
\end{figure}

\vspace{-0.4cm}

\begin{figure}[H]
    \centering
    \includegraphics[width=0.6\linewidth]{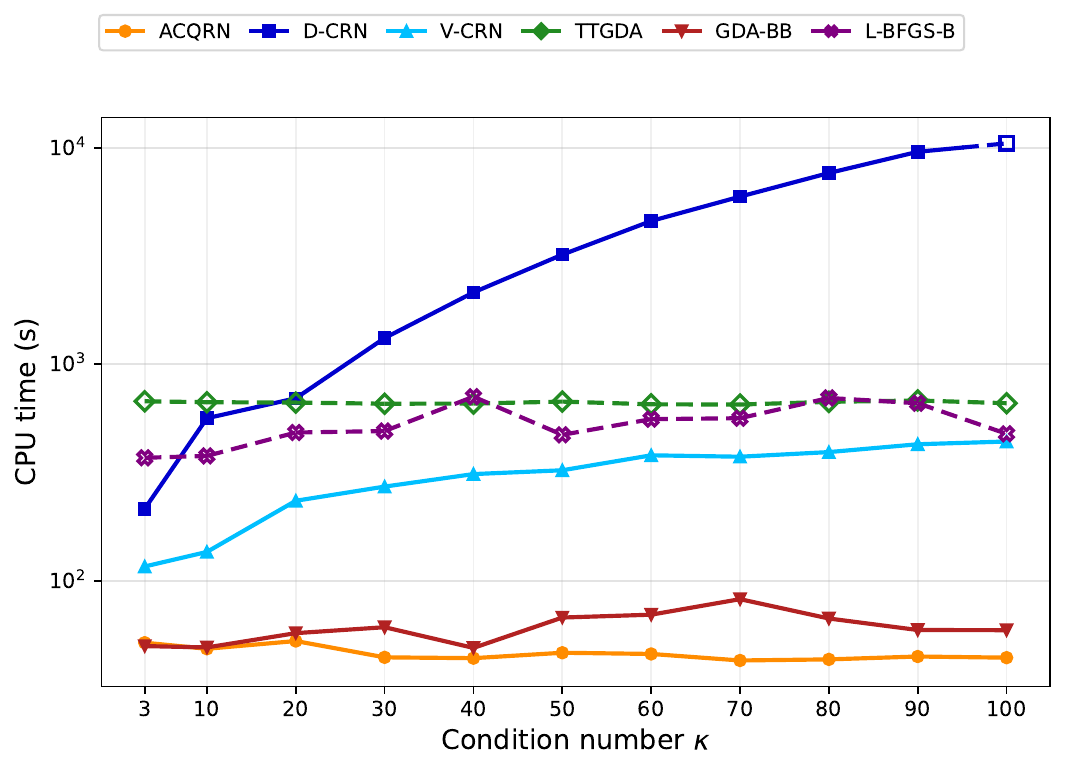}
    \caption{CPU time versus condition number in the deterministic robust-regression experiments over $\kappa\in\{3,10,20,\ldots,100\}$. Dashed line segments and hollow markers indicate runs that do not reach the target tolerance within the prescribed iteration budget; for these runs, the plotted values are final CPU times.}
    \label{fig:kappa-runtime}
\end{figure}

\subsection{Stochastic Setting}
To simulate the stochastic expectation setting, we sample from the dataset with replacement. Similar to the deterministic setting, we fix $\rho_x = 1$, initialize at the origin, and vary $\rho_y$ to generate instances with different condition numbers $\kappa$. For the tested stochastic CRN methods, the termination criterion is $|\Phi(x) - \Phi^*| \le 10^{-4}$ with a maximum of 100 outer iterations. For the tested stochastic first-order methods, the maximum iteration count is extended to 500. In the following we present the details of all tested algorithms.

\noindent \textbf{S-ACQRN}: our single-loop  S-ACQRN method (Algorithm \ref{alg:stoch-cubic-minimax}). 
We set $\beta = 2\mu^{-1}$, $\alo = 2\beta\rho$, and $\alt = 2(3\beta L + 1)\rho$, which match our theoretical analysis. Moreover, we implement an increasing batch size schedule which scales linearly with the epoch index. Specifically, we set the epoch lengths to $S^g = S^h = S = 5$. Let $E(k) = \lfloor k / S \rfloor + 1$ denote the current epoch index. For our Algorithm~\ref{alg:stoch-cubic-minimax}, the empirical batch sizes at iteration $k$ are
\begin{equation*}
\begin{aligned}
    |\cBgko| = |\cBgkt| &= \begin{cases}
     500 \cdot E(k), & \text{if } \operatorname{mod}(k, S) = 0;\\[1ex]
     100 \cdot E(k), &\text{otherwise};
    \end{cases}\\[2ex]
    |\cBhko| = |\cBhkt| &= |\cBgko| / 10, \qaq
    |\cBgky| = 100 \cdot E(k).
\end{aligned}
\end{equation*}
\noindent \textbf{D-SCRN}: the double-loop stochastic CRN method \citep[Algorithm 5]{chen2023a}. All hyperparameters, except for the batch sizes, are selected according to the theoretical results in \citep[Theorem 4]{chen2023a}\footnote{Because the theoretical bounds in \citep[Theorem 4]{chen2023a} often require the inner SGD loop to run for $10^{10}$ to $10^{13}$ iterations under our setting, we impose a practical upper limit of $10^4$ inner iterations to ensure the algorithm terminates within a reasonable amount of time.}. Let $\tilde{\mathcal{B}}^g_k$ and $\tilde{\mathcal{B}}^h_k$ denote D-SCRN's gradient and Hessian batch sizes at iteration $k$. We let its batch-size schedule follow a steadily increasing sequence proportional to the epoch indices:
\begin{equation}
\label{eq:D-SCRN-batch}
\begin{aligned}
    |\tilde{\mathcal{B}}^g_{k}| = 150 \cdot E(k) \qaq
    |\tilde{\mathcal{B}}^h_{k}| = 15 \cdot E(k).
\end{aligned}
\end{equation}
This calibration ensures that S-ACQRN and D-SCRN have comparable Hessian-sample budgets in the subproblems over each epoch. \vspace{0.1cm}

\noindent \textbf{V-SCRN}: vanilla stochastic CRN method \citep{tripuraneni2018stochastic} applied to the minimization problem \eqref{Prob_Min} with $\beta = 2/\mu$. The batch sizes match \eqref{eq:D-SCRN-batch} and  stochastic estimates of $\nabla^2 \hb$ are constructed using independent sampling. \vspace{0.1cm}

\noindent \textbf{TTSGDA}: stochastic version of TTGDA method \citep{lin2020gradient,lin2025two}. The step sizes $\eta_y$ and $\eta_x$ are chosen from the combinations listed for TTGDA method. The gradient batch size matches \eqref{eq:D-SCRN-batch}. \vspace{0.1cm}
    
\noindent \textbf{VRSGDA}: a double-loop variance-reduced stochastic GDA method \cite[Algorithm 5]{luo2020stochastic}. The gradient batch size matches \eqref{eq:D-SCRN-batch} and all the other hyperparameters are selected according to \cite[Theorem 1]{luo2020stochastic}\footnote{\cite[Theorem 1]{luo2020stochastic} requires the inner loop to run for $\lceil1024 \times \kappa\rceil$ iterations per outer iteration. We impose a practical upper limit of 2000 inner iterations to ensure the algorithm terminates within a reasonable amount of time.}. \vspace{0.1cm}

\begin{figure}[htbp]
    \centering
    \includegraphics[width=\linewidth]{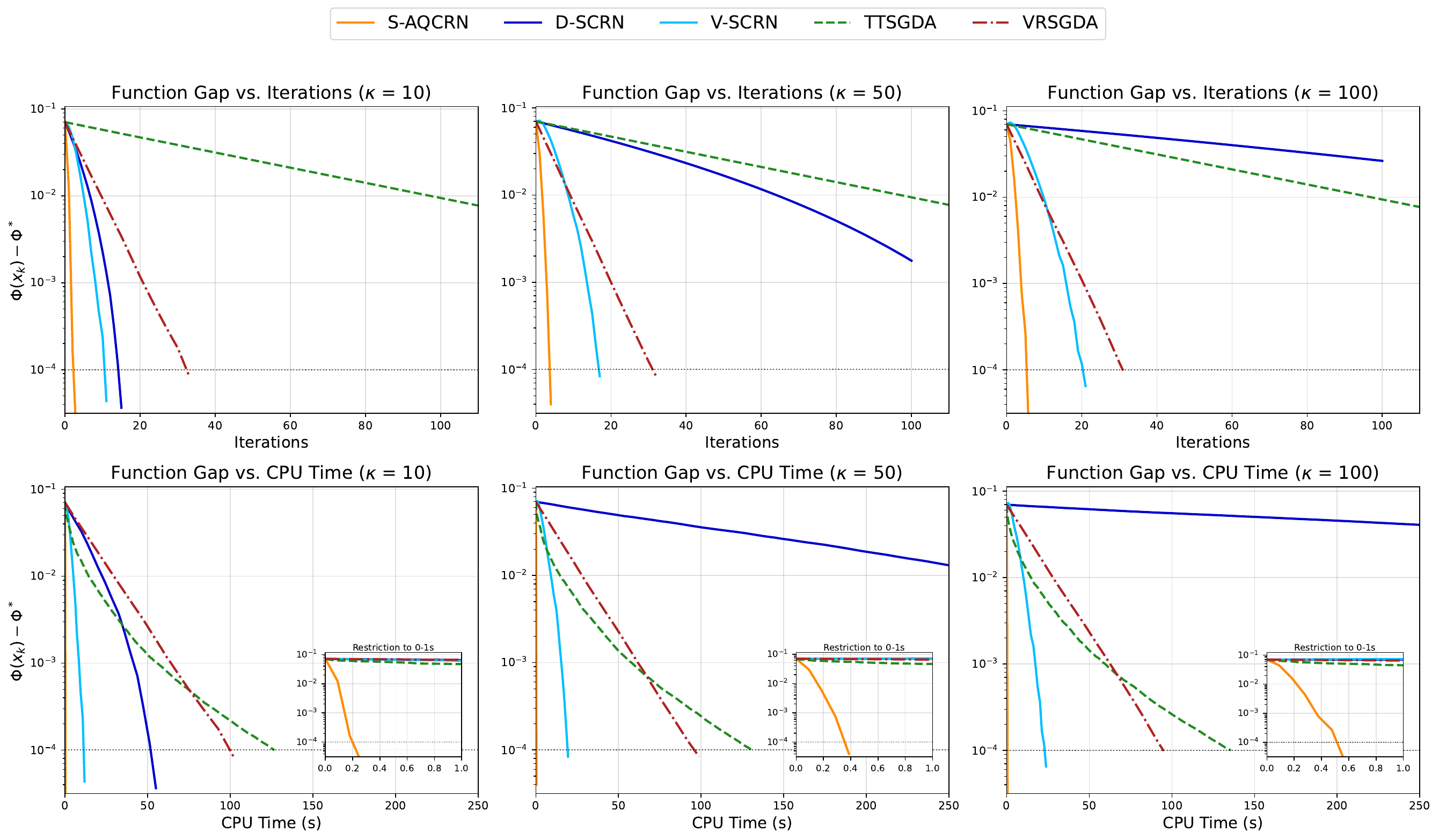}
    \caption{Stochastic robust-regression results for $\kappa\in\{10,50,100\}$. The top row reports $\Phi(x_k)-\Phi^*$ versus iteration, and the bottom row reports $\Phi(x_k)-\Phi^*$ versus CPU time. The horizontal dotted line represents the termination criterion $|\Phi(x) - \Phi^*|\le 10^{-4}$. The insets in the lower panels zoom in on the first second of CPU time.} 
    \label{fig:cubic-stoch}
\end{figure}

\begin{figure}[htbp]
    \centering
    \includegraphics[width=0.6\linewidth]{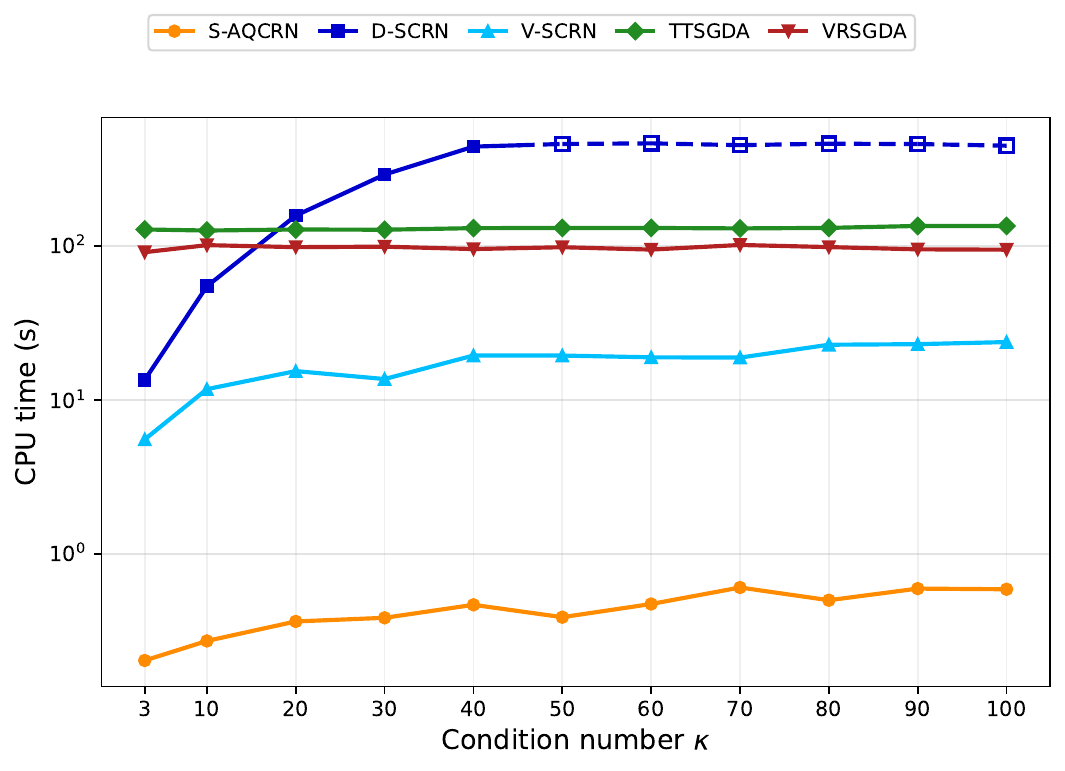}
    \caption{CPU time versus condition number in the stochastic robust-regression experiment over $\kappa\in\{3,10,20,\ldots,100\}$. Dashed line segments and hollow markers indicate runs that do not reach the target tolerance within the prescribed iteration budget; for these runs, the plotted values are final CPU times.}
    \label{fig:stoch-kappa-runtime}
\end{figure}

\noindent\textbf{Results for the stochastic setting.} 
The numerical results for the stochastic setting are reported in Figures~\ref{fig:cubic-stoch} and \ref{fig:stoch-kappa-runtime}. As in the deterministic setting, our S-ACQRN method has superior numerical performance compared to all the other tested methods. Furthermore, as $\kappa$ increases, the performance of the D-SCRN method is impaired by a cubic regularization parameter that scales as $\mathcal{O}(\kappa^3)$. The CPU runtime in Figure~\ref{fig:stoch-kappa-runtime} shows that S-ACQRN remains stable across the tested condition numbers, while D-SCRN reaches the iteration budget for the larger-$\kappa$ instances.

\section{Conclusion}
In this paper, we propose a novel single-loop cubic regularized Newton (CRN) framework for NC-SC minimax optimization. By minimizing a smooth regularized reformulation and introducing an Adaptive Cubic-Quadratic (ACQ) majorization scheme, our approach successfully bypasses the computational and theoretical bottlenecks inherent to existing double-loop methods. Consequently, we recover fast local superlinear convergence in the deterministic setting and achieve improved sample complexities in the stochastic setting. Furthermore, our in-expectation analysis resolves a critical technical gap in the existing stochastic nonconvex minimization literature. Looking forward, we emphasize that our regularized reformulation and the ACQ majorization technique are not strictly limited to cubic regularization; they provide a versatile and tractable foundation that can be naturally extended to design and analyze other single-loop second-order methods for the NC-SC minimax setting.






\vskip 0.2in
\bibliographystyle{plain}
\bibliography{references}
\end{document}